\theoremstyle{plain}
\newtheorem{Theorem}{Theorem}[section]
\newtheorem{Lemma}{Lemma}[section]
\theoremstyle{definition}
\newtheorem{Definition}{Definition}[section]
\newtheorem{Example}{Example}[section]
\theoremstyle{remark}
\newtheorem{remark}{Remark}[section]
\numberwithin{equation}{section}
\newcommand{\la}{\langle}
\newcommand{\ra}{\rangle}
\newcommand{\mf}{\mathfrak}
\newcommand{\mbb}{\mathbb}
\newcommand{\mbf}{\mathbf}
\newcommand{\mc}{\mathcal}
\newcommand{\on}{\operatorname}
\newcommand{\ad}{\mathbf{ad}}
\newcommand{\g}{\mathfrak{g}}
\newcommand{\h}{\mathfrak{h}}
\newcommand{\ann}{\on{ann}}
\newcommand{\Mod}{{\mathrm{-Mod}}}
\newcommand{\Alg}{{\mathrm{-Alg}}}
\newcommand{\CAlg}{{\mathrm{-CAlg}}}
\newcommand{\CPoiss}{{\mathrm{-CPoiss}}}
\newcommand{\Top}{{\mathrm{Top}}}
\newcommand{\Hom}{\operatorname{Hom}}
\newcommand{\MCat}{\operatorname{MonCat}}
\newcommand{\id}{\operatorname{id}}
\newcommand{\K}{\mbb{K}}
\newcommand{\Col}{{\mc{CS}\mathrm{urf}}}
\newcommand{\sCol}{{\mathrm{sk}\mc{CS}\mathrm{urf}}}
\newcommand{\FSet}{{\on{FSet}}}
\providecommand{\abs}[1]{\lvert#1\rvert}
\newcommand{\mmat}[2][3em]{\matrix (#2) [matrix of math nodes, row sep=#1,
  column sep=#1, text height=1.5ex, text depth=0.25ex]}
\tikzset{node distance=2cm, auto}
\title{Coherent Quantization using Coloured Surfaces}
\author{David Li-Bland and Pavol \v{S}evera}
\begin{document}
\begin{abstract}
In this note, we revisit the quantization of Lie bialgebras described by the second author in \cite{Severa:2014te}, placing it in the more general framework of quantizing moduli spaces developed in \cite{LiBland:2014da}. In particular, we show that embeddings of quilted surfaces (which are compatible with the choice of skeleton) induce morphisms between the corresponding quantized moduli spaces of flat connections. As an application, we describe quantizations of both the variety of Lagrangian subalgebras and the de-Cocini Procesi wonderful compactification, which are compatible with the action of the (quantized) Poisson Lie group.
\end{abstract}
\maketitle
\section{Introduction}


\subsection{Coloured Surfaces, and Moduli Spaces}\label{sec:GeomColSurf}
Suppose that $\Sigma$ is a compact oriented surface with boundary and $\mbf{Walls}:=\{\mbf{w}_i\}_{i\in I}\subset\partial \Sigma$ is a finite collection of disjoint segments in the boundary, which we call \emph{domain walls}. We require that every connected component of $\Sigma$ contains a domain wall. 
\begin{center}
\begingroup%
  \makeatletter%
  \providecommand\color[2][]{%
    \errmessage{(Inkscape) Color is used for the text in Inkscape, but the package 'color.sty' is not loaded}%
    \renewcommand\color[2][]{}%
  }%
  \providecommand\transparent[1]{%
    \errmessage{(Inkscape) Transparency is used (non-zero) for the text in Inkscape, but the package 'transparent.sty' is not loaded}%
    \renewcommand\transparent[1]{}%
  }%
  \providecommand\rotatebox[2]{#2}%
  \ifx\svgwidth\undefined%
    \setlength{\unitlength}{44.22585449bp}%
    \ifx\svgscale\undefined%
      \relax%
    \else%
      \setlength{\unitlength}{\unitlength * \real{\svgscale}}%
    \fi%
  \else%
    \setlength{\unitlength}{\svgwidth}%
  \fi%
  \global\let\svgwidth\undefined%
  \global\let\svgscale\undefined%
  \makeatother%
  \begin{picture}(1,0.98857799)%
    \put(0,0){\includegraphics[width=\unitlength,page=1]{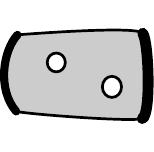}}%
    \put(3.43464069,0.44540333){\color[rgb]{0,0,0}\makebox(0,0)[rb]{\smash{}}}%
  \end{picture}%
\endgroup%
\\
	Fig 1) The domain walls on this surface are indicated by thickened segments.
\end{center}

Now suppose that $\g$ is a (finite dimensional) Lie algebra over a field $\mbb{K}$ of characteristic zero, and $t\in(S^2\g)^\g$ is a chosen $\ad$-invariant element. We observe that $t$ defines a $\g$-invariant bilinear form on the dual space $\g^*$; and we say that a subalgebra $\mf{c}\subseteq\g$ is \emph{coisotropic} if the restriction of this bilinear form to the annihilator $\ann(\mf{c})\subseteq \g^*$ of $\mf{c}$ vanishes.\footnote{Equivalently, the image of $t$ under the projection $S^2\g\to S^2(\g/\mf{c})$ is zero.} Similarly, suppose that $G$ is an algebraic group\footnote{By an \emph{algebraic group} we mean an affine group scheme of finite type over $\mbb{K}$; in particular, since $\mbb{K}$ is of characteristic zero, an algebraic group over $\mbb{K}$ is smooth.} with Lie algebra $\g$ and that $t\in(S^2\g)$ is $G$-invariant, we say that an algebraic subgroup $C\subseteq G$ is \emph{coisotropic} if the corresponding Lie subalgebra of $\g$ is coisotropic.
 
A \emph{colouring} of the domain walls is an assignment $\mbf{w}\mapsto C_\mbf{w}$ of a coisotropic subgroup of $G$ to each domain wall. We call the data $(\Sigma,\mbf{Walls},C_\cdot)$ a \emph{coloured surface}. A typical example will be depicted as follows:
\begin{center}
\begingroup%
  \makeatletter%
  \providecommand\color[2][]{%
    \errmessage{(Inkscape) Color is used for the text in Inkscape, but the package 'color.sty' is not loaded}%
    \renewcommand\color[2][]{}%
  }%
  \providecommand\transparent[1]{%
    \errmessage{(Inkscape) Transparency is used (non-zero) for the text in Inkscape, but the package 'transparent.sty' is not loaded}%
    \renewcommand\transparent[1]{}%
  }%
  \providecommand\rotatebox[2]{#2}%
  \ifx\svgwidth\undefined%
    \setlength{\unitlength}{177.69462891bp}%
    \ifx\svgscale\undefined%
      \relax%
    \else%
      \setlength{\unitlength}{\unitlength * \real{\svgscale}}%
    \fi%
  \else%
    \setlength{\unitlength}{\svgwidth}%
  \fi%
  \global\let\svgwidth\undefined%
  \global\let\svgscale\undefined%
  \makeatother%
  \begin{picture}(1,0.24604403)%
    \put(0,0){\includegraphics[width=\unitlength,page=1]{ColSurfA.pdf}}%
    \put(0.38219965,0.1655566){\color[rgb]{0,0,0}\makebox(0,0)[lb]{\smash{Domain walls}}}%
    \put(0.43213677,0.11331394){\color[rgb]{0,0,0}\makebox(0,0)[lb]{\smash{$C_+\subseteq G$}}}%
    \put(0.43213677,0.05928867){\color[rgb]{0,0,0}\makebox(0,0)[lb]{\smash{$C_-\subseteq G$}}}%
    \put(0.85483687,0.11085501){\color[rgb]{0,0,0}\makebox(0,0)[rb]{\smash{}}}%
  \end{picture}%
\endgroup%
\end{center}

We define a morphism between coloured surfaces $(\Sigma,\mbf{Walls},C_\cdot)\to (\Sigma',\mbf{Walls}',C_\cdot')$
	to be an orientation preserving embedding of surfaces $\Sigma\to\Sigma'$ which maps domain walls coloured by a given coisotropic subgroup of $G$ into domain walls coloured by the same coisotropic subgroup.
	
In \cite{LiBland:2012vo,LiBland:2013ue}, we studied the moduli space which classifies flat $G$-bundles on $\Sigma$ with reductions of structure along the domain walls dictated by the corresponding coisotropic subgroups. We showed that the moduli space carries a natural Poisson structure generalizing the Atiyah-Bott Poisson structure on moduli spaces (cf. \cite{Atiyah:1983dt,Goldman:1986eh}). In particular, we constructed a functor from the category of coloured surfaces to the category of Poisson algebras.

Subsequently,  we described a deformation-quantization procedure for the Poisson algebras arising from coloured surfaces \cite{LiBland:2014da}. That deformation quantization procedure depends upon a choice of combinatorial decomposition of the surface (which we call a \emph{skeleton} for the surface see \S~\ref{sec:Skeleta}).\footnote{This is closely related to a framing of the surface.} The main result in the current paper is to show that this deformation-quantization procedure is functorial with respect to embeddings of coloured surfaces which are compatible with the chosen skeleta. We apply this result to
\begin{itemize}
	\item deform finite dimensional Poisson algebraic/Lie groups to quantum groups (following ideas of \cite{Severa:2014te}),
	\item  to equivalently quantize various classical phase spaces such as the variety of Lagrangian subalgebras \cite{Drinfeld:1993il,Evens:2001ue,Evens:2006kk} and the de Concini-Procesi wonderful compactification of a Poisson algebraic group \cite{DeConcini:1983ki,Evens:2001ue,Evens:2006kk,LiBland:2009hx}, and
	\item to show that (formally) any Poisson homogeneous space $M$ for a finite dimensional Poisson algebraic group $H$ can be canonically $H$-equivariantly embedded as a Poisson submanifold of a larger Poisson $H$-space $\tilde M$, which can be equivariantly quantized (this builds on the ideas of Enriquez and Kosmann-Schwarzbach \cite{Enriquez:r0OMuB1e}).  
\end{itemize}

To our knowledge, this procedure provides the first equivariant quantizations of the variety of Lagrangian subalgebras and the deConcini-Procesi wonderful compactification of a Poisson Lie group. While Poisson Lie groups have been quantized by Etingof and Kazhdan in a celebrated series of  papers \cite{Etingof:1996bc,Etingof:1998ix,Etingof:2000gc,Etingof:2000gj,Etingof:2008dm,Etingof:1998vf}, our approach to this problem provides a different perspective which is closely related to topological field theories. Moreover, it describes the geometric picture underlying \v{S}evera's quantization of Lie bialgebras \cite{Severa:2014te}.

Allow us to sketch our deformation quantization of Poisson Lie/algebraic groups to Hopf algebras in some more detail. Suppose $H$ is a Poisson Lie group, $G$ is the double, and $H^*$ is the dual Poisson Lie group. For simplicity we will assume that the product map $$H^*\times H\xrightarrow{h,h'\mapsto h\cdot h'}G$$ is a diffeomorphism. Let $\mc{M}_\Sigma$ denote the moduli space  of flat $G$ bundles over the following coloured surface
\begin{center}
\begingroup%
  \makeatletter%
  \providecommand\color[2][]{%
    \errmessage{(Inkscape) Color is used for the text in Inkscape, but the package 'color.sty' is not loaded}%
    \renewcommand\color[2][]{}%
  }%
  \providecommand\transparent[1]{%
    \errmessage{(Inkscape) Transparency is used (non-zero) for the text in Inkscape, but the package 'transparent.sty' is not loaded}%
    \renewcommand\transparent[1]{}%
  }%
  \providecommand\rotatebox[2]{#2}%
  \ifx\svgwidth\undefined%
    \setlength{\unitlength}{228.18520508bp}%
    \ifx\svgscale\undefined%
      \relax%
    \else%
      \setlength{\unitlength}{\unitlength * \real{\svgscale}}%
    \fi%
  \else%
    \setlength{\unitlength}{\svgwidth}%
  \fi%
  \global\let\svgwidth\undefined%
  \global\let\svgscale\undefined%
  \makeatother%
  \begin{picture}(1,0.33248722)%
    \put(0,0){\includegraphics[width=\unitlength,page=1]{PoissonLie.pdf}}%
    \put(0.67488036,0.16124026){\color[rgb]{0,0,0}\makebox(0,0)[lb]{\smash{$H\subset G$}}}%
    \put(0.67488036,0.09112178){\color[rgb]{0,0,0}\makebox(0,0)[lb]{\smash{$H^*\subset G$}}}%
    \put(0.60476188,0.23135874){\color[rgb]{0,0,0}\makebox(0,0)[lb]{\smash{Domain walls}}}%
    \put(0.23207915,0.15442469){\color[rgb]{0,0,0}\makebox(0,0)[rb]{\smash{$\Sigma=$}}}%
  \end{picture}%
\endgroup%

\end{center}
with the indicated reductions along the coloured domain walls. Then $\mc{M}_\Sigma$ is canonically isomorphic to the Poisson manifold $H$ (cf. \cite{LiBland:2013ue}). However, the identification of the moduli space $\mc{M}_{\Sigma_{G,H,H^*}}$ with a Poisson Lie group is more than coincidental: the following cobordism (from the top horizontal slice to the bottom horizontal slice)
\begin{center}
\begingroup%
  \makeatletter%
  \providecommand\color[2][]{%
    \errmessage{(Inkscape) Color is used for the text in Inkscape, but the package 'color.sty' is not loaded}%
    \renewcommand\color[2][]{}%
  }%
  \providecommand\transparent[1]{%
    \errmessage{(Inkscape) Transparency is used (non-zero) for the text in Inkscape, but the package 'transparent.sty' is not loaded}%
    \renewcommand\transparent[1]{}%
  }%
  \providecommand\rotatebox[2]{#2}%
  \ifx\svgwidth\undefined%
    \setlength{\unitlength}{315.30371094bp}%
    \ifx\svgscale\undefined%
      \relax%
    \else%
      \setlength{\unitlength}{\unitlength * \real{\svgscale}}%
    \fi%
  \else%
    \setlength{\unitlength}{\svgwidth}%
  \fi%
  \global\let\svgwidth\undefined%
  \global\let\svgscale\undefined%
  \makeatother%
  \begin{picture}(1,0.37479702)%
    \put(0,0){\includegraphics[width=\unitlength,page=1]{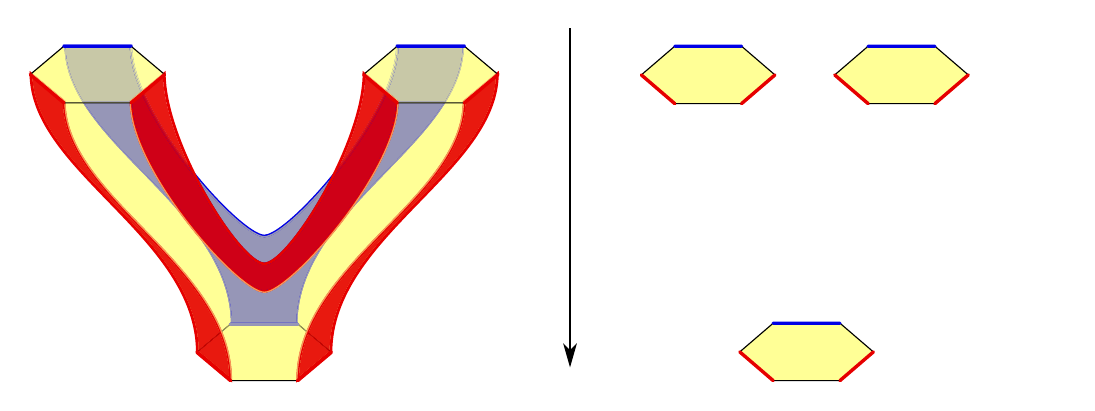}}%
    \put(0.54930483,0.04698375){\color[rgb]{0,0,0}\makebox(0,0)[b]{\smash{$t$}}}%
    \put(0.80588161,0.04698375){\color[rgb]{0,0,0}\makebox(0,0)[lb]{\smash{$t=1$}}}%
    \put(0.89722211,0.30070738){\color[rgb]{0,0,0}\makebox(0,0)[lb]{\smash{$t=0$}}}%
  \end{picture}%
\endgroup%

\end{center}
 corresponds to a group product on the moduli space, \begin{equation}\label{eq:IntroMult}\mc{M}_{\Sigma_{G,H,H^*}}\times \mc{M}_{\Sigma_{G,H,H^*}}\to \mc{M}_{\Sigma_{G,H,H^*}},\end{equation} and further cobordisms correspond to the group unit and the various structural axioms for a group. To define \eqref{eq:IntroMult} rigourously, we need to be careful: although we are unable to handle these cobordisms directly, we reinterpret them as sequences of embeddings (essentially by slicing them along the critical values for a Morse function):
\begin{center}
\begingroup%
  \makeatletter%
  \providecommand\color[2][]{%
    \errmessage{(Inkscape) Color is used for the text in Inkscape, but the package 'color.sty' is not loaded}%
    \renewcommand\color[2][]{}%
  }%
  \providecommand\transparent[1]{%
    \errmessage{(Inkscape) Transparency is used (non-zero) for the text in Inkscape, but the package 'transparent.sty' is not loaded}%
    \renewcommand\transparent[1]{}%
  }%
  \providecommand\rotatebox[2]{#2}%
  \ifx\svgwidth\undefined%
    \setlength{\unitlength}{337.15371094bp}%
    \ifx\svgscale\undefined%
      \relax%
    \else%
      \setlength{\unitlength}{\unitlength * \real{\svgscale}}%
    \fi%
  \else%
    \setlength{\unitlength}{\svgwidth}%
  \fi%
  \global\let\svgwidth\undefined%
  \global\let\svgscale\undefined%
  \makeatother%
  \begin{picture}(1,0.40249562)%
    \put(0,0){\includegraphics[width=\unitlength]{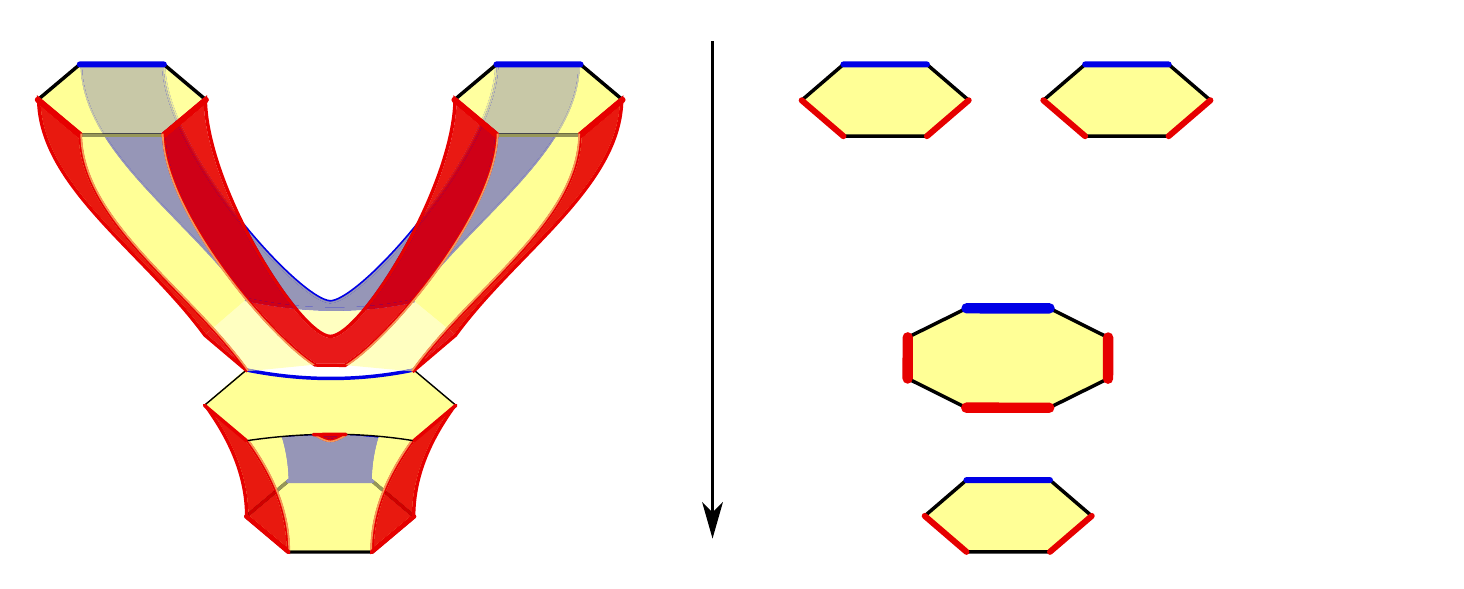}}%
    \put(0.5089603,0.0724125){\color[rgb]{0,0,0}\makebox(0,0)[b]{\smash{$t$}}}%
    \put(0.75365465,0.04393884){\color[rgb]{0,0,0}\makebox(0,0)[lb]{\smash{$t=1$}}}%
    \put(0.77263709,0.15308784){\color[rgb]{0,0,0}\makebox(0,0)[lb]{\smash{$t=\frac{1}{2}$}}}%
    \put(0.83907563,0.32867544){\color[rgb]{0,0,0}\makebox(0,0)[lb]{\smash{$t=0$}}}%
  \end{picture}%
\endgroup%

\end{center}
This results in the following sequence of embeddings:
\begin{center}
\begingroup%
  \makeatletter%
  \providecommand\color[2][]{%
    \errmessage{(Inkscape) Color is used for the text in Inkscape, but the package 'color.sty' is not loaded}%
    \renewcommand\color[2][]{}%
  }%
  \providecommand\transparent[1]{%
    \errmessage{(Inkscape) Transparency is used (non-zero) for the text in Inkscape, but the package 'transparent.sty' is not loaded}%
    \renewcommand\transparent[1]{}%
  }%
  \providecommand\rotatebox[2]{#2}%
  \ifx\svgwidth\undefined%
    \setlength{\unitlength}{444.86015625bp}%
    \ifx\svgscale\undefined%
      \relax%
    \else%
      \setlength{\unitlength}{\unitlength * \real{\svgscale}}%
    \fi%
  \else%
    \setlength{\unitlength}{\svgwidth}%
  \fi%
  \global\let\svgwidth\undefined%
  \global\let\svgscale\undefined%
  \makeatother%
  \begin{picture}(1,0.23917552)%
    \put(0,0){\includegraphics[width=\unitlength,page=1]{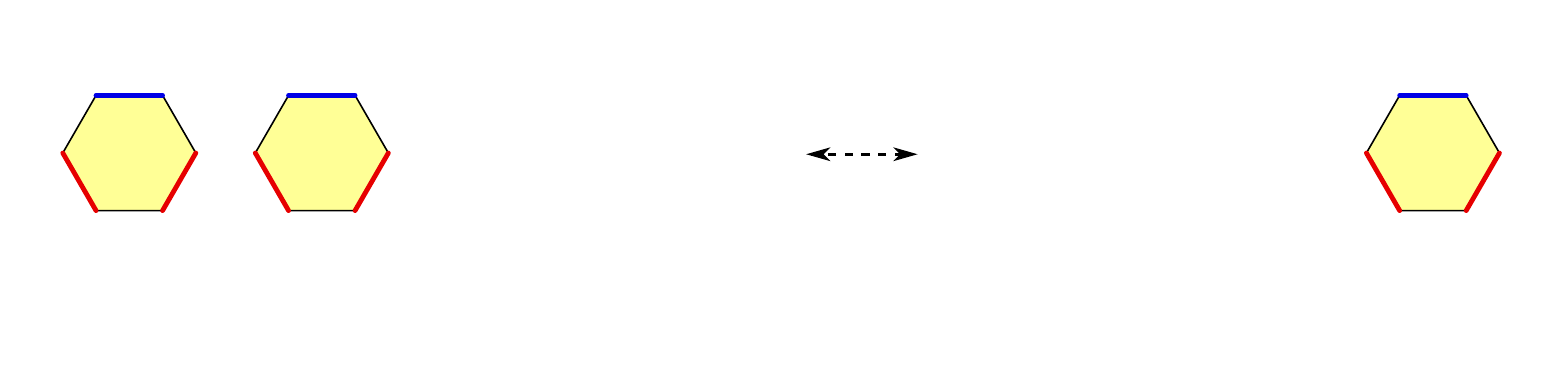}}%
    \put(0.55897301,0.14604063){\color[rgb]{0,0,0}\makebox(0,0)[b]{\smash{$=$}}}%
    \put(0.43118712,0.02169749){\color[rgb]{0,0,0}\makebox(0,0)[b]{\smash{$\Sigma^{(2)}$}}}%
    \put(0.68654827,0.02169749){\color[rgb]{0,0,0}\makebox(0,0)[b]{\smash{$\Sigma^{(2)}$}}}%
    \put(0.92752287,0.02169749){\color[rgb]{0,0,0}\makebox(0,0)[b]{\smash{$\Sigma$}}}%
    \put(0.14705288,0.02169749){\color[rgb]{0,0,0}\makebox(0,0)[b]{\smash{$\Sigma\sqcup\Sigma$}}}%
    \put(0,0){\includegraphics[width=\unitlength,page=2]{PoissonLieMult.pdf}}%
  \end{picture}%
\endgroup%
	
\end{center}
Finally, using the fact that our deformation quantization of $\mc{M}_\Sigma$ is  functorial with respect to embeddings (Theorem~\ref{thm:CohQuant}) yields a compatible coproduct on the deformation quantization of $\mc{M}_\Sigma$, i.e. it deforms $H\cong \mc{M}_\Sigma$ to a Hopf algebra.

In future work we hope to show that a variation of our deformation quantization procedure doesn't depend upon the choice of skeleta for our coloured surfaces. The close resemblence of our deformation quantization procedure with the quantization developed by Ben-Zvi, Brochier, and Jordan \cite{BenZvi:Rwmg8j55}, provides evidence that this should be possible. Moreover, we hope the resulting functor will extend naturally to cobordisms between coloured surfaces and not just embeddings.

\subsection{Acknowledgements}
The authors would like to thank Anton Alekseev, David Ben-Zvi, Yuri Berest, Adrien Brochier, Pavel Etingof, and David Jordan, for helpful discussions and suggestions. 

David Li-Bland was supported by NSF Grant DMS-1204779. Pavol Severa was supported by the grant MODFLAT of the European Research Council and the NCCR SwissMAP of the Swiss National Science Foundation.

\section{Preliminaries}

\subsection{Combinatorics}
An \emph{ordered morphism} $p:I\to J$ between two finite sets $I$ and $J$ is a map $p:I\to J$ of the underlying sets equipped with a linear order on every fibre $p^{-1}(j)$. To compose two ordered morphisms, one first composes the underlying maps of sets, and then equips the resulting fibres with the lexicographic composite of the linear orders.

A convenient way to picture an ordered morphism $p:I\to J$ is to use a \emph{polygonal representation}: for each $i\in I$ one draws a planar polygon with $\abs{p^{-1}(i)}+1$ edges (we allow `polygons' with one, two, or more edges), and one labels the bottom edge with $i$, and the remaining edges with the elements of $p^{-1}(i)$ in the counter-clockwise order. For example, the ordered morphism 
\begin{subequations}\label{eq:ordMorphP}\begin{equation}p:\{1_a,2_a,1_b,2_b,3_b,1_c\}\xrightarrow{i_\alpha\mapsto \alpha}\{a,b,c\}\end{equation}
(with the obvious linear order on the fibres) can be pictured as follows:
\begin{equation}\begin{tikzpicture}
	\coordinate (a) at (0,0);
	\coordinate (1a) at (2,0);
	\coordinate (2a) at (1,1.62);
	\draw[-,very thick] (a) -- node [swap] {$a$} (1a);
	\draw[-] (1a) -- node [swap] {$1_a$} (2a);
	\draw[-] (2a) -- node [swap] {$2_a$} (a);
	\begin{scope}[shift={(4,0)}]
	\coordinate (b) at (0,0);
	\coordinate (1b) at (2,0);
	\coordinate (2b) at (2,2);
	\coordinate (3b) at (0,2);
	\draw[-,very thick] (b) -- node [swap] {$b$} (1b);
	\draw[-] (1b) -- node [swap] {$1_b$} (2b);
	\draw[-] (2b) -- node [swap] {$2_b$} (3b);
	\draw[-] (3b) -- node [swap] {$3_b$} (b);
	\end{scope}
	\begin{scope}[shift={(8,0)}]
	\coordinate (c) at (0,0);
	\coordinate (1c) at (2,0);
	\draw[-,very thick] (c) -- node [swap] {$c$} (1c);
	\draw[-] (1c) edge [bend right] node [swap] {$1_c$} (c);
	\end{scope}
\end{tikzpicture}\end{equation}
\end{subequations}
Composition then corresponds to gluing along the indicated edges. For example the composite of $p$ with $$q:\{a,b,c\}\underset{b\mapsto y}{\xrightarrow{a,c\mapsto x}}\{x,y\}$$
$$\begin{tikzpicture}
	\coordinate (x) at (0,0);
	\coordinate (a) at (2,0);
	\coordinate (c) at (1,1.62);
	\draw[-,very thick] (x) -- node [swap] {$x$} (a);
	\draw[-] (a) -- node [swap] {$a$} (c);
	\draw[-] (c) -- node [swap] {$c$} (x);
	\begin{scope}[shift={(4,0)}]
	\coordinate (y) at (0,0);
	\coordinate (b) at (2,0);
	\draw[-,very thick] (y) -- node [swap] {$y$} (b);
	\draw[-] (b) edge [bend right] node [swap] {$b$} (y);
	\end{scope}
\end{tikzpicture}$$
is pictured as
$$\begin{tikzpicture}
	\coordinate (x) at (0,0);
	\coordinate (1a) at (2,0);
	\coordinate (2a) at (2,2);
	\coordinate (1c) at (0,2);
	\draw[-,very thick] (x) -- node [swap] {$x$} (1a);
	\draw[-] (1a) -- node [swap] {$1_a$} (2a);
	\draw[-] (2a) -- node [swap] {$2_a$} (1c);
	\draw[-] (1c) -- node [swap] {$1_c$} (x);
	\draw[-,dotted] (1c) -- node {$a$} (1a);
	\draw[-,dotted] (1c) edge [bend left] node {$c$} (x);
	\begin{scope}[shift={(4,0)}]
	\coordinate (b) at (0,0);
	\coordinate (1b) at (2,0);
	\coordinate (2b) at (2,2);
	\coordinate (3b) at (0,2);
	\draw[-,very thick] (b) -- node [swap] {$y$} (1b);
	\draw[-,dotted] (b) edge [bend left] node {$b$} (1b);
	\draw[-] (1b) -- node [swap] {$1_b$} (2b);
	\draw[-] (2b) -- node [swap] {$2_b$} (3b);
	\draw[-] (3b) -- node [swap] {$3_b$} (b);
	\end{scope}
\end{tikzpicture}$$
where we have glued along the dotted lines.


 We let $\FSet^<$ denote the category consisting of finite sets and ordered morphisms. There is an obvious forgetful functor $\FSet^<\to \FSet$ to the category of finite sets.
 
Extending the concept somewhat, a \emph{parenthesized ordered morphism} $p:I\to J$ between two finite sets $I$ and $J$ is an ordered morphism $p:I\to J$,  together with a parenthesization of the ordered set $p^{-1}(j)$ for each $j\in J$. For example, the ordered morphism $\{1,2,3\}\to \{x\}$ (with $1<2<3$), can be equipped with two different parenthesizations: either $(1\quad 2)\quad 3$ or $1\quad (2\quad 3)$. Equivalently, equipping an ordered morphism with a parenthesization is equivalent to triangulating the corresponding polygonal representation. For example, the two possible parentesizations of $\{1,2,3\}\to \{x\}$ are:
\begin{subequations}
\begin{equation}\label{eq:TwoBasicParenth}\begin{tikzpicture}
	\coordinate (b) at (0,0);
	\coordinate (1b) at (2,0);
	\coordinate (2b) at (2,2);
	\coordinate (3b) at (0,2);
	\draw[-,very thick] (b) -- node [swap] {$x$} (1b);
	\draw[-] (1b) -- node [swap] {$1$} (2b);
	\draw[-] (2b) -- node [swap] {$2$} (3b);
	\draw[-] (3b) -- node [swap] {$3$} (b);
	\draw[-,thin] (3b) -- (1b);
	\node (m) at (1,-1) {$(1\quad 2)\quad 3$};
	\begin{scope}[shift={(6,0)}]
	\coordinate (b) at (0,0);
	\coordinate (1b) at (2,0);
	\coordinate (2b) at (2,2);
	\coordinate (3b) at (0,2);
	\draw[-,very thick] (b) -- node [swap] {$x$} (1b);
	\draw[-] (1b) -- node [swap] {$1$} (2b);
	\draw[-] (2b) -- node [swap] {$2$} (3b);
	\draw[-] (3b) -- node [swap] {$3$} (b);
	\draw[-,thin] (2b) -- (b);
	\node (m) at (1,-1) {$1\quad (2\quad 3)$};
	\end{scope}
\end{tikzpicture}\end{equation}
Similarly, the five possible parenthesizations of $\{1,2,3,4\}\to \{y\}$ (with $1<2<3<4$) are:
\begin{equation}\label{eq:PentParenth}\begin{tikzpicture}
	\coordinate (y) at (0,0);
	\coordinate (1) at (2,0);
	\coordinate (2) at (2.5,1.8);
	\coordinate (3) at (1,3);
	\coordinate (4) at (-.5,1.8);
	\draw[-,very thick] (y) -- node [swap] {$x$} (1);
	\draw[-] (1) -- node [swap] {$1$} (2);
	\draw[-] (2) -- node [swap] {$2$} (3);
	\draw[-] (3) -- node [swap] {$3$} (4);
	\draw[-] (4) -- node [swap] {$4$} (y);
	\draw[-,thin] (3) -- (1);
	\draw[-,thin] (3) -- (y);
	\node (m) at (1,-1) {$(1\quad 2)\quad (3\quad 4)$};
	
\begin{scope}[shift={(4.5,-3.6)}]
	\coordinate (y) at (0,0);
	\coordinate (1) at (2,0);
	\coordinate (2) at (2.5,1.8);
	\coordinate (3) at (1,3);
	\coordinate (4) at (-.5,1.8);
	\draw[-,very thick] (y) -- node [swap] {$x$} (1);
	\draw[-] (1) -- node [swap] {$1$} (2);
	\draw[-] (2) -- node [swap] {$2$} (3);
	\draw[-] (3) -- node [swap] {$3$} (4);
	\draw[-] (4) -- node [swap] {$4$} (y);
	\draw[-,thin] (2) -- (y);
	\draw[-,thin] (3) -- (y);	
	\node (m) at (1,-1) {$1\quad (2\quad (3\quad 4))$};
\end{scope}
	
\begin{scope}[shift={(-4.5,-3.6)}]
	\coordinate (y) at (0,0);
	\coordinate (1) at (2,0);
	\coordinate (2) at (2.5,1.8);
	\coordinate (3) at (1,3);
	\coordinate (4) at (-.5,1.8);
	\draw[-,very thick] (y) -- node [swap] {$x$} (1);
	\draw[-] (1) -- node [swap] {$1$} (2);
	\draw[-] (2) -- node [swap] {$2$} (3);
	\draw[-] (3) -- node [swap] {$3$} (4);
	\draw[-] (4) -- node [swap] {$4$} (y);
	\draw[-,thin] (3) -- (1);
	\draw[-,thin] (4) -- (1);
	\node (m) at (1,-1) {$((1\quad 2)\quad 3)\quad 4$};
\end{scope}
	
\begin{scope}[shift={(3,-9)}]
	\coordinate (y) at (0,0);
	\coordinate (1) at (2,0);
	\coordinate (2) at (2.5,1.8);
	\coordinate (3) at (1,3);
	\coordinate (4) at (-.5,1.8);
	\draw[-,very thick] (y) -- node [swap] {$x$} (1);
	\draw[-] (1) -- node [swap] {$1$} (2);
	\draw[-] (2) -- node [swap] {$2$} (3);
	\draw[-] (3) -- node [swap] {$3$} (4);
	\draw[-] (4) -- node [swap] {$4$} (y);
	\draw[-,thin] (4) -- (2);
	\draw[-,thin] (2) -- (y);
	\node (m) at (1,-1) {$1\quad ((2\quad 3)\quad 4)$};
\end{scope}
	
\begin{scope}[shift={(-3,-9)}]
	\coordinate (y) at (0,0);
	\coordinate (1) at (2,0);
	\coordinate (2) at (2.5,1.8);
	\coordinate (3) at (1,3);
	\coordinate (4) at (-.5,1.8);
	\draw[-,very thick] (y) -- node [swap] {$x$} (1);
	\draw[-] (1) -- node [swap] {$1$} (2);
	\draw[-] (2) -- node [swap] {$2$} (3);
	\draw[-] (3) -- node [swap] {$3$} (4);
	\draw[-] (4) -- node [swap] {$4$} (y);
	\draw[-,thin] (4) -- (2);
	\draw[-,thin] (4) -- (1);
	\node (m) at (1,-1) {$(1\quad (2\quad 3))\quad 4$};
\end{scope}

\end{tikzpicture}\end{equation}
\end{subequations}

As before, triangulated polygonal representations of parenthesized ordered morphisms are composed by gluing.

While the choice of parenthesization which one equips an ordered morphism with will be important, any two parenthesizations should be equivalent in a unique way. To encode this, we define a 2-category $\FSet^{(<)}$:
\begin{itemize}
	\item whose objects are finite sets,
	\item whose 1-morphisms $p:I\to J$ between finite sets $I$ and $J$ are parenthesized ordered morphisms,
	\item with a unique 2-morphism $\alpha:p\to p'$ between any two 1-morphisms $p,p':I\to J$ which both have the same underlying ordered morphism.
\end{itemize}
Note that $\FSet^{(<)}$ and $\FSet^{<}$ are naturally biequivalent as 2-categories. 
Indeed, all 2-morphisms are invertible, and for any two finite sets $I$ and $J$, the groupoid\footnote{Recall that a groupoid is a category all of whose morphisms are invertible} $\Hom_{\FSet^{(<)}}(I,J)$ of 1-morphisms between $I$ and $J$ is equivalent to the set $\Hom_{\FSet^{<}}(I,J)$ of ordered morphisms between $I$ and $J$. In particular, the forgetful functor $\FSet^{(<)}\to \FSet^{<}$ which is the identity on objects, sends any parenthesized ordered morphism to the underlying ordered morphism, and any 2-morphism to the identity, defines a biequivalence of 2-categories. 

Finally, we note that there is a canonical `order reversal' endofunctor on $\FSet^<$ (and $\FSet^{(<)}$) which fixes the objects but reverses the linear orders for every ordered morphism; we denote this order reversal by $p\to \bar p$ for any ordered morphism $p:I\to J$ (in terms of their (triangulated) polygonal representations, this corresponds to reversing the orientation of the polygons - i.e. flipping them over).



\subsection{The Drinfel'd Category}
Let $\g$ be a Lie algebra over a field $\mbb{K}$ of characteristic zero, and suppose that  $t\in(S^2\g)^\g$ is a chosen $\ad$-invariant element.

Let $\Phi\in\mathbb{K}\la\!\la x,y\ra\!\ra$ be a Drinfeľd associator. The element $t\in(S^2\g)^G$ and the associator $\Phi$ may be used to deform the symmetric monoidal structure on the category $U(\g)\Mod$ to a braided monoidal structure:
 More precisely, let $U(\g)\Mod^\Phi_\hbar$ be the category with the same objects as $U(\g)\Mod$, and with
$$\Hom_{U(\g)\Mod^\Phi_\hbar}(X,Y)=\Hom_{U(\g)\Mod}(X,Y)[\![\hbar]\!].$$
The tensor product of objects and morphisms are the same in both categories, but the braiding in $U(\g)\Mod^\Phi_\hbar$ is the symmetry in $U(\g)\Mod$ composed with the action of $\exp(\hbar t^{1,2}/2)\in U(\g\oplus\g)[\![\hbar]\!]$, and the associativity constraint is given by the action of $\Phi(\hbar t^{1,2},\hbar t^{2,3})\in  U(\g\oplus\g\oplus\g)[\![\hbar]\!]$. See \cite{Drinfeld:1989tu} for details.

\subsection{Quantum Fusion}
Suppose that $\g$ and $\h$ are Lie algebras with chosen elements $t_\g\in(S^2\g)^\g$, $t_\h\in(S^2\h)^\h$.

Given a finite set $J$, we use the shorthand $\g^J:=\bigoplus_{j\in J}\g\cong \on{Maps}(J,\g)$, and given a map $p:I\to J$ between finite sets, we let $$p^!:\g^J\cong \on{Maps}(J,\g)\xrightarrow{(\xi_\cdot:J\to \g)\mapsto(\xi_{p(\cdot)}:I\to \g)} \on{Maps}(I,\g)\cong\g^I$$ denote the corresponding pullback. Since $p^!:\g^J\to \g^I$ is a morphism of Lie algebras, we get a corresponding functor
\begin{subequations}\label{eq:qFusionForp}
\begin{equation}\label{eq:FpUnderFunct}F^p:U(\g^I\oplus\h)\Mod^\Phi_\hbar\to U(\g^J\oplus\h)\Mod^\Phi_\hbar\end{equation} between the (deformed) categories of modules.

If $p:I\to J$ is a parenthesized ordered morphism, then \eqref{eq:FpUnderFunct} can be upgraded to a monoidal functor (cf. \cite[Theorem~2]{LiBland:2014da}) as follows: For each $j\in J$, let $I_j=p^{-1}(j)$; we choose an order preserving identification $I_j=(1_j,2_j,\dots,n_j)$, where $n_j=\abs{I_j}$. We may construct a new parenthesized ordered set
$$I_j^{(a)(b)}:=(1_j^a\quad \cdots\quad n_j^a)(1_j^b\quad \cdots\quad n_j^b),$$
where the terms in each half are parenthesized as in $I_j$, and a second parenthesized ordered set
$$I_j^{(ab)}:=((1_j^a\quad 1_j^b)\quad \cdots\quad (n_j^a\quad n_j^b)),$$
where the pairs are parenthesized as in $I_j$. We let $B_j$ denote the parenthesized braid from $I_j^{(a)(b)}$ to $I_j^{(ab)}$ which identifies the corresponding elements, all the strands connecting elements of the form $i_j^a$ move strictly to the right, the strands connected elements of the form $i_j^b$ move strictly to the left, and the rightward moving strands pass over the leftward moving strands. For example, if $I_j=(1\quad 2)$, then $B_j$ is the parenthesized braid 
$$\begin{tikzpicture}[baseline=1cm]
\coordinate (diff) at (0.65,0);
\coordinate (dy) at (0,0.5);
\node(X1) at (0,0) {$(1^a$};
\node(Y1) at ($(X1)+(diff)$) {$2^a)$};
\node(Z1) at (2,0) {$(1^b$};
\node(W1) at ($(Z1)+(diff)$) {$2^b)$};
\node(X2) at (0,2) {$(1^a$};
\node(Z2) at ($(X2)+(diff)$) {$1^b)$};
\node(Y2) at (2,2) {$(2^a$};
\node(W2) at ($(Y2)+(diff)$) {$2^b)$};
\draw(X1)--(X2);
\draw(W1)--(W2);
\draw (Z1)..controls +(0,1) and +(0,-1)..(Z2);
\draw[line width=1ex,white] (Y1)..controls +(0,1) and +(0,-1)..(Y2);
\draw(Y1)..controls +(0,1) and +(0,-1)..(Y2);
\end{tikzpicture}$$
if $I_j=((1\quad 2)\quad 3)$, then $B_j$ is the parenthesized braid 
$$\begin{tikzpicture}
[baseline=1cm]
\coordinate (diff) at (0.75,0);
\coordinate (dy) at (0,0.5);
\node(X1b) at (0,0) {$((1^a$};
\node(X2b) at ($(X1b)+(diff)$) {$2^a)$};
\node(X3b) at ($(X2b)+(diff)$) {$3^a)$};
\node(Y1b) at (3,0) {$((1^b$};
\node(Y2b) at ($(Y1b)+(diff)$) {$2^b)$};
\node(Y3b) at ($(Y2b)+(diff)$) {$3^b)$};
\node(X1t) at (0,3) {$((1^a$};
\node(Y1t) at ($(X1t)+(diff)$) {$1^b)$};
\node(X2t) at ($(Y1t)+(diff)$) {$(2^a$};
\node(Y2t) at  ($(X2t)+(diff)$) {$2^b))$};
\node(X3t) at (3.75,3) {$(3^a$};
\node(Y3t) at  ($(X3t)+(diff)$) {$3^b)$};
\draw (Y1b)..controls +(0,1) and +(0,-1)..(Y1t);
\draw (Y2b)..controls +(0,1) and +(0,-1)..(Y2t);
\draw(Y3b)--(Y3t);
\draw(X1b)--(X1t);
\draw[line width=1ex,white] (X2b)..controls +(0,1) and +(0,-1)..(X2t);
\draw (X2b)..controls +(0,1) and +(0,-1)..(X2t);
\draw[line width=1ex,white] (X3b)..controls +(0,1) and +(0,-1)..(X3t);
\draw (X3b)..controls +(0,1) and +(0,-1)..(X3t);

\end{tikzpicture}$$

We let $K_j\in \big(U(\g)^{\otimes I_j}\big)^{\otimes 2}[\![\hbar]\!]\cong U(\g^{I_j})^{\otimes 2}[\![\hbar]\!]$ denote the element corresponding to the parenthesized braid $B_j$. Then the functor \eqref{eq:FpUnderFunct} can be equipped with a monoidal structure via the coherence map 
\begin{equation}\label{eq:FpCoherence}F^p(X^a)\otimes F^p(X^b)\xrightarrow{\nu^p\cdot}F^p(X^a\otimes X^b),\quad X^a,X^b\in U(\g^I\oplus\h)\Mod^\Phi_\hbar
	\end{equation}
given by the action of \begin{equation}\nu^p:=\prod_{j\in J}K_j \in U(\g^I)^{\otimes 2}[\![\hbar]\!]\end{equation}
	\end{subequations} (cf. \cite[Theorem~2]{LiBland:2014da}). The unit coherence map is the identity, as before. We call the corresponding monoidal functor the \emph{(quantum) fusion} functor associated to the parenthesized ordered morphism $p:I\to J$.

\begin{Theorem}\label{thm:CohAss}
	There is a  strict 2-functor (called \emph{(quantum) fusion}) \begin{equation}\label{eq:QuantFus2Funct}F^{(-)}:\FSet^{(<)}\to \MCat\end{equation} from the 2-category  of parenthesized ordered morphisms betweeen finite sets to the 2-category of monoidal categories, monoidal functors, and monoidal natural transformations, which sets any finite set $I$ to $U(\g^I\oplus\h)\Mod^\Phi_\hbar$, and any parenthesized ordered morphism $p:I\to J$ to the quantum fusion functor \eqref{eq:qFusionForp} described above.
\end{Theorem}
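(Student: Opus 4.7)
The plan is to verify the three defining properties of a strict 2-functor in turn: (a) $F^{q\circ p} = F^q\circ F^p$ strictly for composable parenthesized ordered morphisms, (b) the assignment $\alpha^{p,p'}\colon F^p\Rightarrow F^{p'}$ to the unique 2-morphism between parenthesized morphisms with the same underlying ordered morphism is a well-defined monoidal natural isomorphism, and (c) these 2-morphisms respect vertical and horizontal composition. That each $F^p$ is itself a monoidal functor is already contained in \cite[Theorem~2]{LiBland:2014da}, so only the functoriality and 2-naturality are at issue.

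For (a) the underlying functors compose on the nose, since $(q\circ p)^! = p^!\circ q^!$ as maps $\g^K\to\g^I$. The coherence of the composite $F^q\circ F^p$ at a pair of $\g^I\oplus\h$-modules is, by the definition of the coherence of a composite monoidal functor, the action of $\nu^p\cdot \nu^q \in U(\g^I)^{\otimes 2}[\![\hbar]\!]$ (with $\nu^q$ pulled back along $p^!$ componentwise). I would show $\nu^{q\circ p} = \nu^p\cdot\nu^q$ by a fiberwise braid-isotopy argument: for each $k\in K$, the parenthesized braid $B_k^{q\circ p}$ shuffling $((q\circ p)^{-1}(k))^a$ and $((q\circ p)^{-1}(k))^b$ decomposes as the ``fattened'' braid $B_k^q$ — in which each strand represents the whole sub-fiber $p^{-1}(j)$ (i.e.\ the diagonal action) — followed by the disjoint union of the individual $B_j^p$ for $j\in q^{-1}(k)$. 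Reading this factorization through the correspondence ``parenthesized braid $\leftrightarrow$ element of $U(\g^I)^{\otimes 2}[\![\hbar]\!]$'' gives $K_k^{q\circ p} = \bigl(\prod_{j\in q^{-1}(k)} K_j^p\bigr)\cdot K_k^q$, and multiplying over $k\in K$ yields the required identity.

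For (b) note that $F^p$ and $F^{p'}$ have identical underlying functors whenever the two parenthesized morphisms have the same underlying ordered morphism; only the coherence elements $\nu^p,\nu^{p'}$ differ, and the difference is concentrated fiberwise as products of associator elements $\Phi(\hbar t^{i_1,i_2},\hbar t^{i_3,i_4})$. The 2-morphism $\alpha^{p,p'}$ is then defined on objects by the action of the corresponding product of (iterated) associators, pulled back along $p^!$. Well-definedness — independence of the chosen sequence of elementary reparenthesization moves — is Mac Lane's coherence theorem for the braided monoidal category $U(\g)\Mod^\Phi_\hbar$, whose pentagon axiom is precisely the Drinfel'd associator relation on $\Phi$. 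Monoidality of $\alpha^{p,p'}$ against the coherence data $\nu^p$ and $\nu^{p'}$ reduces to the two hexagon identities relating $\Phi$ and the $R$-matrix $\exp(\hbar t/2)$; pictorially, this is the statement that one may isotope an associator past the parenthesized shuffle-braid $B_j$ at the cost of reparenthesizing the braid's endpoints. Claim (c) then follows formally: vertical composition of $\alpha$'s is composition of associator actions, compatible by coherence, while compatibility with horizontal composition reduces to the same braid-factorization used in (a), now applied with associators inserted at each stage.

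The main obstacle is the fiberwise braid-factorization underlying (a), which must be executed carefully to track how the nested parenthesizations of the composite polygon (obtained by gluing the polygons of $p$ and $q$ along their bottom edges) interact with the overcrossing convention for $B_k$. Once that combinatorial identity is in hand, both (b) and (c) are essentially formal consequences of the defining axioms of the Drinfel'd associator $\Phi$ and the resulting coherence in the braided monoidal category $U(\g)\Mod^\Phi_\hbar$.
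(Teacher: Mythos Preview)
Your proposal is correct and follows essentially the same architecture as the paper's proof: strict 1-functoriality on the nose, 2-morphisms supplied by (products of) Drinfel'd associators, and Mac Lane coherence to make the assignment well-defined and compatible with vertical composition. Two small differences are worth noting. First, you give a more explicit braid-factorization argument for (a) than the paper, which simply asserts that ``the parenthesized braid (and hence the coherence map) depends naturally on the parenthesized ordered morphism'' and cites locality-in-space from \cite{BarNatan:1998he}; your decomposition of $B_k^{q\circ p}$ into a fattened $B_k^q$ followed by the disjoint $B_j^p$'s is exactly the content of that assertion. Second, for the monoidality of $\alpha^{p,p'}$ you invoke the hexagon identities, whereas the paper verifies the basic case $((\cdot\cdot)\cdot)\leftrightarrow(\cdot(\cdot\cdot))$ by observing directly that the two composites in the coherence square represent the \emph{same} parenthesized braid on six strands, then checks the pentagon of associators before invoking Mac Lane. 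Your phrasing is slightly loose---the identity is really a consequence of coherence for parenthesized braids, which requires pentagon \emph{and} hexagon together, not the hexagons alone---but the idea is the same, and the paper's direct braid comparison is just a concrete instance of that coherence. For horizontal composition the paper again cites \cite{BarNatan:1998he} (``locality in scale''), which is the cabling compatibility you describe in (c).
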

\begin{proof}
	The fact that the natural transformation \eqref{eq:FpCoherence} satisfies the axioms for a monoidal structure on \eqref{eq:FpUnderFunct} follows from \cite[Theorem~2]{LiBland:2014da} by iterated fusion using the given parenthesization.\footnote{Of course, the parenthesization doesn't give a canonical order in which to perform the iterated fusion, but the final result doesn't depend on the various choices. Indeed, quantum fusion along disjoint ordered pairs commutes: see \cite[Lemma~3.5]{BarNatan:1998he}, for example, where this fact is expressed as `locality in space'.} Moreover, it is clear that the underlying functor \eqref{eq:FpUnderFunct} depends naturally on the underlying map of sets, $p:I\to J$, while the parenthesized braid (and hence the coherence map \eqref{eq:FpCoherence}) depends naturally on the parenthesized ordered morphism $p:I\to J$. Thus, \eqref{eq:QuantFus2Funct} certainly restricts to an ordinary functor between the underlying categories.
	
	It remains to define \eqref{eq:QuantFus2Funct} on the 2-morphisms, and to show that it is compatible with both horizontal and vertical composition of 2-morphisms. We begin by considering the two basic parenthesized ordered morphisms pictured in \eqref{eq:TwoBasicParenth}, which we denote by $((\cdot\cdot)\cdot)$ and $(\cdot(\cdot\cdot))$, respectively. Since the underlying maps of sets coincide, the underlying functors \eqref{eq:FpUnderFunct} are identical $F^{((\cdot\cdot)\cdot)}=D^{(3)}=F^{(\cdot(\cdot\cdot))}$, and only the coherence maps \eqref{eq:FpCoherence} differ. However, we claim that there is a natural monoidal isomorphism $(D^{(3)},\nu^{((\cdot\cdot)\cdot)})\dasharrow (D^{(3)},\nu^{(\cdot(\cdot\cdot))})$
 between the two functors, 
 \begin{equation}\label{eq:MonIso}D^{(3)}(X)\xrightarrow{(\Phi^{1,2,3}\cdot)} D^{(3)}(X),\end{equation}
given by the natural action of the associator $\Phi^{1,2,3}\in U(\g\oplus\g\oplus\g)[\![\hbar]\!]$ on $X\in U(\g\oplus\g\oplus\g\oplus \h)\Mod^\Phi_\hbar$. More precisely, \eqref{eq:MonIso} makes the following diagrams commute:
\begin{equation}\label{eq:CohAssSq}\begin{tikzpicture}
\mmat[4em]{m}{ D^{(3)}(X^a)\otimes D^{(3)}(X^b) & &D^{(3)}(X^a)\otimes D^{(3)}(X^b)\\
D^{(3)}(X^a\otimes X^b) & &D^{(3)}(X^a\otimes X^b)\\};
\draw[->] (m-1-1) edge node {$(\Phi^{1a,2a,3a}\Phi^{1b,2b,3b}\cdot)$} (m-1-3)
(m-1-1) edge node {$\nu^{((\cdot\cdot)\cdot)}_{X,Y}$} (m-2-1)
(m-1-3) edge node {$\nu^{(\cdot(\cdot\cdot))}_{X,Y}$} (m-2-3)
(m-2-1) edge node {$(\Phi^{1a1b,2a2b,3a3b}\cdot)$} (m-2-3);
\end{tikzpicture}\end{equation}
Indeed, both composites in this diagram correspond to the parenthesized braid
$$\begin{tikzpicture}
[baseline=1cm]
\coordinate (diff) at (0.75,0);
\coordinate (dy) at (0,0.5);
\node(X1b) at (0,0) {$((1a$};
\node(X2b) at ($(X1b)+(diff)$) {$2a)$};
\node(X3b) at ($(X2b)+(diff)$) {$3a)$};
\node(Y1b) at (3,0) {$((1b$};
\node(Y2b) at ($(Y1b)+(diff)$) {$2b)$};
\node(Y3b) at ($(Y2b)+(diff)$) {$3b)$};
\node(X1t) at (0,3) {$(1a$};
\node(Y1t) at ($(X1t)+(diff)$) {$1b)$};
\node(X2t) at (2.25,3) {$((2a$};
\node(Y2t) at  ($(X2t)+(diff)$) {$2b)$};
\node(X3t) at ($(Y2t)+(diff)$) {$(3a$};
\node(Y3t) at  ($(X3t)+(diff)$) {$3b))$};
\draw (Y1b)..controls +(0,1) and +(0,-1)..(Y1t);
\draw (Y2b)..controls +(0,1) and +(0,-1)..(Y2t);
\draw(Y3b)--(Y3t);
\draw(X1b)--(X1t);
\draw[line width=1ex,white] (X2b)..controls +(0,1) and +(0,-1)..(X2t);
\draw (X2b)..controls +(0,1) and +(0,-1)..(X2t);
\draw[line width=1ex,white] (X3b)..controls +(0,1) and +(0,-1)..(X3t);
\draw (X3b)..controls +(0,1) and +(0,-1)..(X3t);
\end{tikzpicture}$$


Next, we consider the five parenthesized ordered morphisms pictured in \eqref{eq:PentParenth}. In each case, the underlying map of sets is the same, and hence the underlying functors \eqref{eq:FpUnderFunct} $D^{(4)}:U(\g^4\oplus \h)\Mod^\Phi_\hbar\to U(\g\oplus \h)\Mod^\Phi_\hbar$ coincide. Also notice that any two adjacent parenthesizations differ by a `basic reparenthesization' of the form $((\cdot\cdot)\cdot) \leftrightarrow(\cdot(\cdot\cdot))$,\footnote{In terms of triangulations of the corresponding polyonal representation, a `basic reparenthesization' corresponds to a type 2-2 pachner move.} and we claim that the following diagram commutes
\begin{equation}\label{eq:CohAssPent}\begin{tikzpicture}
\mmat{m}{\big(D^{(4)},\nu^{(((\cdot\cdot)\cdot)\cdot)}\big) & \big(D^{(4)},\nu^{((\cdot(\cdot\cdot))\cdot)}\big) & \big(D^{(4)},\nu^{(\cdot((\cdot\cdot)\cdot))}\big)\\
\big(D^{(4)},\nu^{((\cdot\cdot)(\cdot\cdot))}\big)&&\big(D^{(4)},\nu^{(\cdot(\cdot(\cdot\cdot)))}\big)\\};
\draw[->] (m-1-1) edge node {$(\Phi^{1,2,3}\cdot)$} (m-1-2)
(m-1-2) edge node {$(\Phi^{1,23,4}\cdot)$} (m-1-3)
(m-1-1) edge node {$(\Phi^{12,3,4}\cdot)$} (m-2-1)
(m-1-3) edge node {$(\Phi^{1,23,4}\cdot)$} (m-2-3)
(m-2-1) edge node {$(\Phi^{1,2,34}\cdot)$} (m-2-3);
\end{tikzpicture}\end{equation}
(the pentagon identity). In fact, this is the case since on the level of objects \eqref{eq:CohAssPent} is just the usual pentagon identity for the Drinfel'd associator $\Phi$.

Therefore, by Mac Lane's Coherence theorem, it follows that for any two parenthesizations of the same underlying ordered morphism $p,p':I\to J$, there is a canonical `higher associator' $\Phi^{p,p'}$ (which can be written as an iterated composite of the Drinfel'd associator, see \cite{MacLane:1998tv} for details) defining a natural isomorphism 
$$(F^{p},\nu^p)\xrightarrow{\Phi^{p,p'}\cdot } (F^{p'},\nu^{p'}),$$
such that for any three parenthesizations of the same underlying ordered morphism $p,p',p'':I\to J$, the following diagram commutes:
$$\begin{tikzpicture}
\mmat{m}{
&(F^{p},\nu^p)&\\
(F^{p'},\nu^{p'})&&(F^{p''},\nu^{p''})\\};
\draw[->,dashed]
(m-1-2) edge node [swap] {$\Phi^{p,p'}$} (m-2-1)
(m-1-2) edge node  {$\Phi^{p,p''}$} (m-2-3)
(m-2-1) edge node [swap] {$\Phi^{p',p''}$} (m-2-3);
\end{tikzpicture}$$

In particular, this defines \eqref{eq:QuantFus2Funct} on the 2-morphisms and shows that it is compatible with vertical composition.

The horizontal composition of monoidal natural transformations is just their Godement product, and the higher associators $\Phi^{p,p'}$ are compatible with the Godement product (see \cite[Lemma~3.5]{BarNatan:1998he}, for example, where this fact is expressed as `locality in scale').
\end{proof}

\subsection{Reduction}
Suppose that $\g$ and $\h$ are Lie algebras with chosen elements $t_\g\in(S^2\g)^\g$, $t_\h\in(S^2\h)^\h$, and $\mf{c}\subseteq \g$ is a coisotropic Lie subalgebra. As shown in
\cite[Proposition~2]{LiBland:2014da}, the functor
$$\mf{c}\oplus\mf{c}\text{-invariants}:U(\g\oplus\g\oplus\h)\Mod^\Phi_\hbar\to U(\h)\Mod^\Phi_\hbar,$$
is monoidal, where the coherence map is the natural inclusion, and the unit coherence map is the identity. As shown in
  \cite[Proposition~2,Theorem~2]{LiBland:2014da} there is a second monoidal functor $$\mf{c}\text{-invariants}\circ F:U(\g\oplus\g\oplus\h)\Mod^\Phi_\hbar\to U(\h)\Mod^\Phi_\hbar,$$
where $F:U(\g\oplus\g\oplus\h)\Mod^\Phi_\hbar\to U(\g\oplus\h)\Mod^\Phi_\hbar$ is the \emph{(quantum) fusion} functor induced by the diagonal inclusion  $\g\oplus\h\to \g\oplus\g\oplus \h$ (cf. \cite[Theorem~2]{LiBland:2014da}), and the coherence map is given by the action of the universal element $J\in \bigl(U(\g\oplus\g)\bigr)^{\otimes2}[\![\hbar]\!]$, followed by the natural inclusion. (The unit coherence map is the identity, as before.)

\begin{Theorem}\label{thm:RedToRedFus}
The natural inclusion $$X^{\mf{c}\oplus\mf{c}}\to X^{\mf{c}_\Delta}=F(X)^\mf{c}$$
(here $\mf{c}_\Delta\subseteq \mf{c}\oplus\mf{c}$ denotes the diagonal),
 defines a monoidal natural transformation $$\mf{c}\oplus\mf{c}\text{-invariants}\dasharrow\mf{c}\text{-invariants}\circ F.$$ 
\end{Theorem}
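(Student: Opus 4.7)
The plan is to break the verification into the three standard pieces of monoidal-natural-transformation data and dispose of the first two by inspection, leaving a single algebraic identity to establish. First, I would observe that the underlying map $X^{\mf{c}\oplus\mf{c}} \hookrightarrow X^{\mf{c}_\Delta} = F(X)^\mf{c}$ is the honest inclusion coming from $\mf{c}_\Delta\subseteq\mf{c}\oplus\mf{c}$, which is manifestly natural in $X$. The unit coherence compatibility is trivial, since both functors send $\mbb{K}$ to $\mbb{K}$ and both unit coherence maps are the identity. That leaves the tensor-product coherence square: given $v\in X^{\mf{c}\oplus\mf{c}}$ and $w\in Y^{\mf{c}\oplus\mf{c}}$, the coherence of $\mf{c}\oplus\mf{c}$-invariants includes $v\otimes w$ into $(X\otimes Y)^{\mf{c}\oplus\mf{c}}\subseteq F(X\otimes Y)^\mf{c}$, while the coherence of $\mf{c}\text{-inv}\circ F$ includes $v\otimes w$ into $F(X)^\mf{c}\otimes F(Y)^\mf{c}$ and then applies the fusion twist $J$. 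The two composites agree as maps into $X\otimes Y$ precisely when $J\cdot(v\otimes w)=v\otimes w$, so the entire proof comes down to this identity.

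The key lemma I would then extract is a position-separated invariance observation. The vector $v\otimes w$ is annihilated by $\mf{c}$ at each of the four positions $1^a, 2^a, 1^b, 2^b$ individually, since for $\xi\in\mf{c}$ one has $\xi_{i^a}(v\otimes w)=(\xi_i v)\otimes w=0$ and similarly for the $b$-positions. Writing $t=t_L+t_R$ with $t_L\in\mf{c}\otimes\g$ and $t_R\in\g\otimes\mf{c}$ via coisotropy, I then claim that for any two distinct positions $V,W\in\{1^a,2^a,1^b,2^b\}$ the operator $t^{VW}$ annihilates $v\otimes w$: for the $t_L$-contribution the $\mf{c}$-valued first factor acts at position $V$ and commutes past the other factor (which acts at the different position $W$), so it still sees $\mf{c}$-invariance at $V$ and produces zero; the $t_R$-contribution is killed directly by $\mf{c}$-invariance at $W$.

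To conclude, I apply this to the structure of $J$. By construction, $J\in U(\g^2)^{\otimes 2}[\![\hbar]\!]$ is assembled from the $R$-matrices $\exp(\hbar t^{VW}/2)$ and Drinfel'd associators $\Phi(\hbar t^{VW},\hbar t^{WZ})$ prescribed by the parenthesized braid $B_j$, and every $t$-factor appearing therein has distinct position-indices. Expanding $J$ as a formal power series, every term other than the constant $1$ is a product of such $t^{V_iW_i}$'s with $V_i\neq W_i$; the rightmost factor annihilates $v\otimes w$ by the previous paragraph, so the entire monomial vanishes. Hence $J\cdot(v\otimes w)=v\otimes w$ and the coherence square commutes.

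The main conceptual step, and what I expect to be the real content of the argument, is the position-separated invariance of $v\otimes w$: recognizing that $X^{\mf{c}\oplus\mf{c}}\otimes Y^{\mf{c}\oplus\mf{c}}$ is $\mf{c}$-invariant at all four positions separately (not merely under the two coproduct actions) is what collapses the entire content of $J$ to its constant term, and without it one would be forced into delicate estimates on the Drinfel'd associator $\Phi$ rather than a uniform vanishing argument.
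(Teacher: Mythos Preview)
Your proposal is correct and follows essentially the same line as the paper's proof: both reduce the coherence square to showing that every $t_\g^{i,j}$ (with $i,j$ ranging over the four positions) annihilates $X^{\mf{c}\oplus\mf{c}}\otimes Y^{\mf{c}\oplus\mf{c}}$, and then conclude that the formal series $J$ acts as the identity there. The only cosmetic difference is in how that vanishing is justified: you split $t=t_L+t_R$ with $t_L\in\mf{c}\otimes\g$, $t_R\in\g\otimes\mf{c}$ and invoke position-wise $\mf{c}$-invariance (using that operators at distinct positions commute), whereas the paper phrases the same fact as ``the image of $t_\g$ in $(\g/\mf{c})\otimes(\g/\mf{c})$ is trivial, so $t_\g^{i,j}$ acts trivially on vectors that are $\mf{c}$-invariant at both positions $i$ and $j$''---a one-line reformulation that sidesteps the commutation step entirely.
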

\begin{proof}
Recall that the monoidal products on $U(\g\oplus\g\oplus\h)\Mod^\Phi_\hbar$ and $U(\h)\Mod^\Phi_\hbar$ are the usual ones (only the braiding and associativity are deformed).
First, we must show that, for any objects $X,Y\in U(\g\oplus\g\oplus\h)\Mod^\Phi_\hbar$, the monoidal product coherence diagram
$$\begin{tikzpicture}
\mmat{m}{
X^{\mf{c}\oplus\mf{c}}\otimes Y^{\mf{c}\oplus\mf{c}}& X^{\mf{c}_\Delta}\otimes X^{\mf{c}_\Delta}\\
(X\otimes Y)^{\mf{c}\oplus\mf{c}}&(X\otimes Y)^{\mf{c}_\Delta}\\};
\draw[->]
(m-1-1) edge (m-1-2)
(m-1-1) edge (m-2-1)
(m-1-2) edge node {$(J\cdot)$} (m-2-2)
(m-2-1) edge (m-2-2);
\end{tikzpicture}$$
commutes, where the undecorated arrows are just the natural inclusions, and the rightmost arrow denotes the action of $J\in (U\g)^{\otimes4}[\![\hbar]\!]=\bigl(U(\g\oplus\g)\bigr)^{\otimes2}[\![\hbar]\!]$, followed by the natural inclusion.

Recall that $J=J(\hbar t_\g^{1,2},\hbar t_\g^{1,3},\hbar t_\g^{1,4},\hbar t_\g^{2,3},\hbar t_\g^{2,4},\hbar t_\g^{3,4})$ is given by a formal series in the variables $\hbar t_\g^{i,j}$ (where $i,j\in\{1,2,3,4\}$ range over distinct pairs). Since $\mf{c}$ is coisotropic, the image of $t_\g\in \g\otimes\g$ in $(\g/\mf{c})\otimes (\g/\mf{c})$ is trivial. Thus, each of $\hbar t_\g^{i,j}$ (where $i,j\in\{1,2,3,4\}$ denote any distinct pair of elements) acts trivially on $X^{\mf{c}\oplus\mf{c}}\otimes Y^{\mf{c}\oplus\mf{c}}$, which implies the commutativity of the diagram above.

The fact that the monoidal unit coherence diagram,
$$\begin{tikzpicture}
\mmat{m}{
&\mbb{K}&\\
\mbb{K}&&\mbb{K}\\};
\draw[->]
(m-1-2) edge (m-2-1)
(m-1-2) edge node {$(J\cdot)$} (m-2-3)
(m-2-1) edge (m-2-3);
\end{tikzpicture}$$
(here the unlabelled arrows denote the identity map)
commutes is obvious, since $J$ acts trivially on $\mbb{K}$.
\end{proof}

%
%

\section{Coloured surfaces}\label{sec:ColSurf}
\subsection{(Geometric) Coloured Surfaces and Skeleta}\label{sec:Skeleta}
It will be useful to have a more combinatorial description of the coloured surfaces described in \S~\ref{sec:GeomColSurf}. In order to do this, 
suppose that the set of domain walls has been partitioned $\mbf{Walls}=\mbf{Walls}^-\sqcup \mbf{Walls}^+$ into `positive' and `negative' domain walls. A \emph{bone} for the coloured surface $(\Sigma,\mbf{Walls},C_\cdot)$ is an embedded oriented 1-dimensional connected manifold with non-empty boundary $\partial\gamma=\gamma_-\sqcup\gamma_+$ whose source $\gamma_-$ lies on a negative domain wall, whose target $\gamma_+$ lies on a positive domain wall, and whose interior is mapped into the interior of $\Sigma$.
\begin{figure}[h!]
\begin{center}
\begin{subfigure}{.4\textwidth}
\centering
	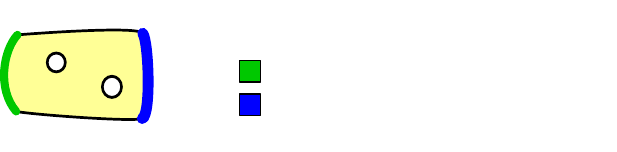
	\caption{ An example of a (geometric) coloured surface.}
\end{subfigure}
\begin{subfigure}{.4\textwidth}
\centering
	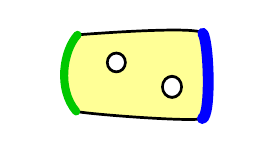
	\caption{  A skeleton.}
	\end{subfigure}
\end{center}
\caption{\label{fig:ColSurfAndSkel}}
\end{figure}
A skeleton of the coloured surface $(\Sigma,\mbf{Walls},C_\cdot)$ is a collection, $\mbf{Bones}$, of  mutually disjoint bones such that the surface $\Sigma$ deformation retracts onto the union $$\bigcup_{\gamma\in\mbf{Bones}\cup\mbf{Walls}}\gamma$$ of bones and domain walls.
\begin{figure}[h!]
\begin{center}
	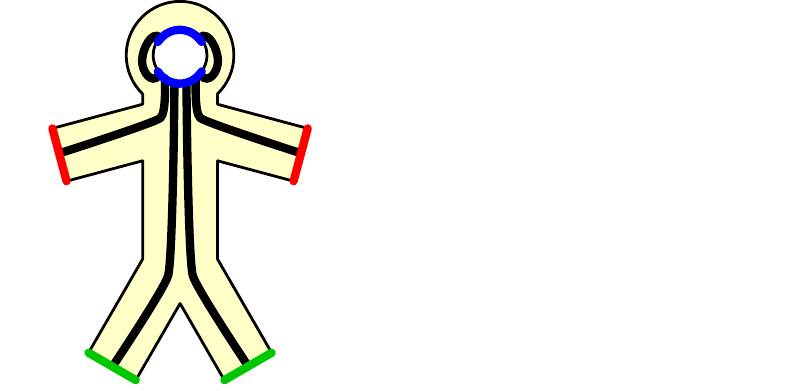
	\caption{\label{fig:SkelMan} Another example of a (geometric) coloured surface.}
\end{center}
\end{figure}

\begin{remark}
If each component of $\Sigma$ contains both a positive and negative domain wall, then a skeleton for $\Sigma$ can always be chosen. If each component of $\Sigma$ has a non-trivial boundary, then one can always add extra positive/negative domain walls (coloured by $C=G$) without changing the moduli spaces associated to the coloured surface in \cite{LiBland:2012vo}. In this way, skeletons can always be chosen for any component of $\Sigma$ with a non-trivial boundary.
\end{remark}

A skeleton for a coloured surface $(\Sigma,\mbf{Walls},C_\cdot)$ naturally defines a bipartite graph $\Gamma$ whose edges are the bones, $\mbf{E}_\Gamma:=\mbf{Bones}$, and whose vertices are the domain walls, $\mbf{V}_\Gamma:=\mbf{Walls}$, with the incidence maps defined in the obvious way. For example, in Figure~\ref{fig:SomeGraphs} we have drawn the graphs corresponding to the skeleta pictured in Figures~\ref{fig:ColSurfAndSkel} and \ref{fig:SkelMan}.

\begin{figure}[h!]
\begin{center}
\begin{subfigure}{.4\textwidth}
\centering
	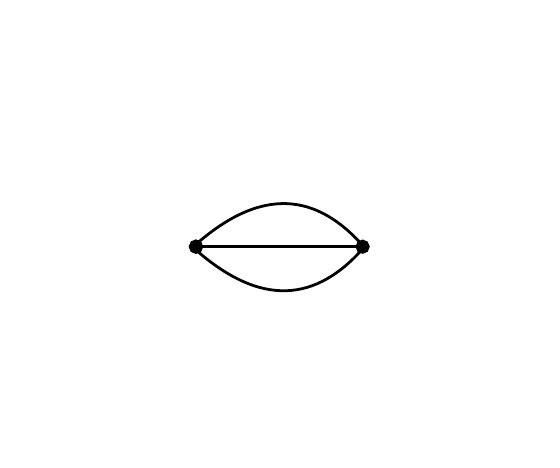 
\caption{The graph corresponding to Figure~\ref{fig:ColSurfAndSkel}.}
\end{subfigure}
\begin{subfigure}{.4\textwidth}
\centering
	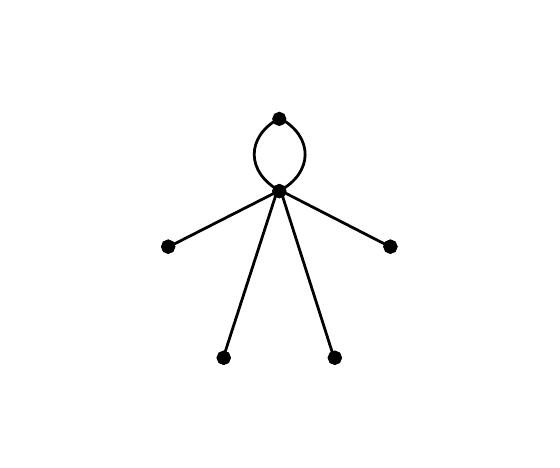 
\caption{The graph corresponding to Figure~\ref{fig:SkelMan}.}
\end{subfigure}
\end{center}
\caption{\label{fig:SomeGraphs}}
\end{figure}

While the graph constructed in this way describes the homotopy type of the surface, we cannot fully recover the surface from the bare graph. For example, notice that each domain wall $\mbf{w}\in \mbf{Walls}$ inherits the boundary orientation from $\partial\Sigma\subset\Sigma$, so the bones incident to $\mbf{w}$ inherit a linear ordering; i.e. the set of edges incident to any vertex in the graph are linearly ordered. As it turns out, in order to recover the surface from the graph it suffices to remember these linear orders (cf. \cite{Fock:1999wz}).

\subsection{Bipartite Graphs with Half-Edges}
Let $\mc{W}$ denote the category with the following objects and (non-identity) morphisms:
$$\begin{tikzpicture}
	\node (v-) at (-2,-1) {$\mbf{V}^-$};
	\node (v+) at (2,-1) {$\mbf{V}^+$};
	\node (h-) at (-2,1) {$\mbf{H}^-$};
	\node (h+) at (2,1) {$\mbf{H}^+$};
	\node (e) at (0,2) {$\mbf{E}$};
	\draw[->] (h-) -- node[swap] {$\mbf{i}_-$} (v-);
	\draw[->] (e) -- node [swap] {$\mbf{h}_-$} (h-);
	\draw[->] (h+) -- node {$\mbf{i}_+$} (v+);
	\draw[->] (e) -- node {$\mbf{h}_+$} (h+);
\end{tikzpicture}$$
A bipartite graph with half edges is a functor $\Gamma:\mc{W}\to\FSet$ which sends $\mbf{h}_\pm$ to injective maps and $\mbf{i}_\pm$ to surjective maps, i.e. it consists of two finite sets of \emph{negative} (resp. \emph{positive}) \emph{vertices}, which we denote with the following shorthand:
$$\mbf{V}^-_\Gamma:=\Gamma(\mbf{V}^-),\quad \mbf{V}^+_\Gamma:=\Gamma(\mbf{V}^+),$$
two finite sets of \emph{negative} (resp. \emph{positive}) \emph{half edges}, denoted with analogous shorthand:
$$\mbf{H}^-_\Gamma:=\Gamma(\mbf{H}^-),\quad \mbf{H}^+_\Gamma:=\Gamma(\mbf{H}^+),$$
one finite set of \emph{edges}:
$$\mbf{E}_\Gamma:=\Gamma(\mbf{E}),$$
 surjective \emph{incidence} maps:
  $$\mbf{i}_\pm:\mbf{H}^\pm_\Gamma\to \mbf{V}^\pm_\Gamma,$$
 and injective maps:
 $$\mbf{h}_\pm:\mbf{E}_\Gamma\to \mbf{H}^\pm_\Gamma,$$
 sending a given edge to its initial and final half (one can think of $\mbf{h}_+\circ\mbf{h}_-^{-1}$ as pairing the subset $\mbf{h}_-(\mbf{E})$ of negative half edges with the subset $\mbf{h}_+(\mbf{E})$ of positive half edges). We call the complement $H^{\pm,w}_\Gamma:=\mbf{H}^\pm_\Gamma\setminus \mbf{h}_\pm\big(\mbf{E}_\Gamma\big)$ the set of (positive/negative) \emph{widowed} half edges.

A morphism $\Gamma\to \Gamma'$ between bipartite graphs with half edges is a natural transformation between the respective functors.

A lift of the functor $\Gamma:\mc{W}\to\FSet$ through the category of ordered morphisms,
$$\begin{tikzpicture}
	\mmat{m}{&\FSet^<\\\mc{W}&\FSet\\};
	\draw[dotted,->] (m-2-1) -- node {$\Gamma^<$} (m-1-2);
	\draw[->] (m-2-1) -- node {$\Gamma$} (m-2-2);
	\draw[->] (m-1-2) --  (m-2-2);
\end{tikzpicture}$$
is said to \emph{equip $\Gamma$ with a linear ordering on the incidence sets}, and we call any functor $\Gamma^<:\mc{W}\to \FSet^<$ over a bipartite graph with half edges a \emph{ciliated graph} (note that this terminology differs somewhat from that of \cite{Fock:1999wz}). Morphisms between ciliated graphs are just natural transformations between the respective functors.

\subsection{(Combinatorial) Skeletized Coloured Surfaces}

Throughout this section we let $\g$ be a Lie algebra with a chosen invariant element $t\in (S^2\g)^\g$. By an \emph{algebraic group over $\mbb{K}$}, we mean an affine group scheme of finite type over $\mbb{K}$; in particular, since $\mbb{K}$ is of characteristic zero, an algebraic group over $\mbb{K}$ is smooth.

An algebraic group $C$ over $\mbb{K}$, equipped with an inclusion of its Lie algebra $\mf{c}\hookrightarrow \g$ is called $\g$-\emph{coisotropic} if $t\in S^2(\g)$ is $C$-invariant and the restriction of $t$ to $\on{ann}(\mf{c})\subseteq \g^*$ vanishes. Note that this implies that $C$ is also $\bar\g$-coisotropic.

Suppose that an algebraic group $H$ comes equipped with a map of Lie algebras $\h\to\g$, we say that a $\K$-scheme, $X$ carries an $(H,\g)$-action if both $H$ and $\g$ act on $X$, and the infinitesimal $\h$ action on $X$ factors through the $\g$ action. We say that the action has \emph{coisotropic stabilizers} if the composite map $$\Omega_{X/\K}\xrightarrow{\rho^*}\g^*\otimes\mc{O}_X\xrightarrow{t^\sharp\otimes\id}\g\otimes\mc{O}_X\xrightarrow{\rho}\Theta_X$$ is zero, where $\Omega_{X/\K}$ is the cotangent sheaf, $\Theta_X=\Hom_{\mc{O}_X}(\Omega_{X/\K},\mc{O}_X)$ is the tangent sheaf, and $\rho:\g\otimes\mc{O}_X\to \Theta_{X}$ is the action map.

As in \cite{LiBland:2014da}, we say that a commutative associative algebra $A\in U(\g)\Mod$ is \emph{$\g$-quasi-Poisson commutative} if $m\circ (t^{1,2}\cdot)=0$, where $m:A\otimes A\to A$ is the multiplication. Notice that $A$ is $\g$-quasi-Poisson-commutative only if it is $\bar\g$-quasi-Poisson-commutative. Moreover, \cite[Proposition~1]{LiBland:2014da} implies that $A$ with its original product is a commutative associative algebra in $U(\g)\Mod^\Phi_\hbar$. It follows from \cite[Proposition~1]{LiBland:2014da} that if $\g$ acts on a $\K$-scheme, $X$, with coisotropic stabilizers, then the structure ring $\mc{O}_X(U)$ is $\g$-quasi-Poisson-commutative for any open set $U\subseteq X$.

 
\begin{Definition}\label{def:SkelColSurf}
A skeletized coloured surface is a triple $(\Gamma^<,C_\cdot,X_\cdot)$, consisting of 
\begin{itemize}
\item a ciliated graph  $\Gamma^<:\mc{W}\to \FSet^<$ 
\item an assignment of a $\g$-coisotropic algebraic group over $\mbb{K}$, $C_v$, to every vertex $v\in\mbf{V}_\Gamma=\mbf{V}_\Gamma^+\sqcup \mbf{V}_\Gamma^-$, 
\item an assignment of a scheme $X_e$ to each edge $e\in \mbf{E}_\Gamma$ on which  $(C_{\mbf{i}_+\circ \mbf{h}_+(e)}\times C_{\mbf{i}_-\circ \mbf{h}_-(e)},\g\oplus\bar\g)$ acts with coisotropic stabilizers.
\item an assignment of a scheme $X_h$ to each widowed half-edge $h\in \mbf{H}^{\pm,w}_\Gamma:=\mbf{H}^\pm_\Gamma\setminus \mbf{h}_\pm\big(\mbf{E}_\Gamma\big)$ on which $(C_{\mbf{i}_\pm(h)},\g)$ acts with coisotropic stabilizers.
\end{itemize}
 
 
 A morphism of skeletized coloured surfaces $$\phi:(\Gamma^<,C_\cdot,X_\cdot)\to (\Gamma'^<,C'_\cdot,X'_\cdot)$$
 is a morphism of the underlying ciliated graphs $\phi:\Gamma^<\to \Gamma'^<$, along with  morphisms
\begin{itemize}
 \item  $C_v\leftarrow C'_{\phi_{\mbf{V}}(v)}$, for any $v\in \mbf{V}_\Gamma$, 
 \item  equivariant morphisms $X_e\leftarrow X'_{\phi_{\mbf{E}}(e)}$, for any $e\in \mbf{E}_\Gamma$, and
 \item  equivariant morphisms $X_h\leftarrow X'_{\phi_{\mbf{H}^\pm}(h)}$, for any $h\in \mbf{H}_\Gamma^{\pm,w}$.
\end{itemize}
 
 We denote the corresponding category by $\sCol$.
\end{Definition}

Given a skeletized coloured surface $(\Gamma^<,\mf{c}_\cdot,X_\cdot)$, let \begin{subequations}\label{eq:XCMGamma}\begin{align}X_\Gamma&:=\prod_{e\in \mbf{E}_\Gamma\sqcup\mbf{H}^{\pm,w}_\Gamma} X_e,\text{ and}\\ C_\Gamma&:=\prod_{v\in \mbf{V}_\Gamma} C_v.\end{align} We define the ringed space $\mc{M}_\Gamma:=\mc{M}_{(\Gamma^<,C_\cdot,X_\cdot)}$, to be the quotient (in the category of commutative-ringed spaces\footnote{The category of commutative ringed spaces is a Grothendieck bi-fibration over the category of topological spaces, the projection being the functor $X=(\abs{X},\mc{O}_{X})\mapsto \abs{X}$ which forgets the sheaf of rings. Since the base is (co)complete, and so are each of the fibres, it follows that the total category of commutative ringed spaces is (co)complete. Moreover, any (co)limit can be computed by first taking the (co)limit in the base, and then computing the (co)limit of the (co)cartesian image of the diagram in the (co)limiting fibre. Note however, that colimits of commutative locally ringed spaces may not be locally ringed.}) \begin{equation}\label{eq:MGamma0}\mc{M}_\Gamma:= X_\Gamma/C_\Gamma=\bigg(\prod_{e\in \mbf{E}_\Gamma\sqcup \mbf{H}_\Gamma} X_e\bigg)/\bigg(\prod_{v\in \mbf{V}_\Gamma} C_v\bigg),\end{equation}\end{subequations}
where each $C_v$ (for $v\in \mbf{V}_\Gamma$) acts diagonally. Explicitly, the underlying topological space is the quotient space $\abs{\mc{M}_\Gamma}:=\abs{X_\Gamma}/\abs{C_\Gamma}$ and for any open set $U\subseteq \mc{M}_\Gamma$, the structure sheaf is $\mc{O}_{\mc{M}_\Gamma}(U)=\pi_*\mc{O}_{X_\Gamma}(U)^C=\mc{O}_{X_\Gamma}\big(\pi^{-1}(U)\big)^C$, where $\pi:\abs{X_\Gamma}\to \abs{\mc{M}_\Gamma}$ is the quotient map. In particular, \eqref{eq:MGamma0} is the geometric quotient of schemes (if it exists).

This construction defines a functor \begin{equation}\label{eq:ClassModSpace}\sCol\xrightarrow{\Gamma\mapsto\mc{M}_\Gamma} \Top_{\K\CAlg}\end{equation} from (combinatorial) skeletized coloured surfaces to comutatively ringed spaces, which we call the \emph{classical moduli space functor} (see \cite{LiBland:2012vo} for the terminology).

\begin{remark}\label{rem:GeomColSurf}
As mentioned in \S~\ref{sec:Skeleta}, any skeleton for a coloured surface $(\Sigma,\mbf{Walls},C_\cdot)$ defines a skeletized coloured surface $(\Gamma^<,C_\cdot,X_\cdot)$ (in the sense of Definition~\ref{def:SkelColSurf}), where the graph $\Gamma$ has edge set $\mbf{E}_\Gamma:=\mbf{Bones}$, vertex set $\mbf{V}_\Gamma:=\mbf{Walls}$, and half edge sets $\mbf{H}^\pm_\Gamma:=\mbf{E}_\Gamma:=\mbf{Bones}$. The maps $\mbf{h}_\pm:\mbf{E}_\Gamma\to \mbf{H}^\pm_\Gamma$ are the identity maps, and for any half edge $\gamma\in \mbf{H}^\pm_\Gamma:=\mbf{Bones}$, the incidence maps are defined as 
\begin{align*}\mbf{i}_-(\gamma)&=\mbf{w}_i\Leftrightarrow \gamma_-\in\mbf{w}_i,\\
\mbf{i}_+(\gamma)&=\mbf{w}_i\Leftrightarrow \gamma_+\in\mbf{w}_i.	
 \end{align*}
 Each domain wall $\mbf{w}\in \mbf{Walls}$ inherits the boundary orientation from $\partial\Sigma\subset\Sigma$, so the bones incident to $\mbf{w}$ inherit a linear ordering.

Thus, we have a natural map
\begin{subequations}
\label{eq:CombFunc}
\begin{equation}\label{eq:SkelColSurfGeom}(\Sigma,\mbf{Walls},\mbf{Bones},C_\cdot)\mapsto(\Gamma^<,C_\cdot,X_\cdot)\end{equation}
 from coloured surfaces equipped with skeleta to (combinatorial) skeletized coloured surfaces (in the sense of Defintion~\ref{def:SkelColSurf}), defined by setting $X_e=G$ for every $e\in \mbf{E}_\Gamma$.\footnote{The Lie algebra $\g\oplus\bar\g$-acts on $G$ with coisotropic stabilizers via
$$(\xi,\eta)\cdot f:=(\eta^L-\xi^R)f,\quad f\in \mc{O}(G),\quad \xi,\eta\in \g\oplus\bar\g,$$
and $\xi^L$ (resp. $\xi^R$) denotes the left (resp. right) invariant vector field on $G$ corresponding to $\xi\in\g$.}  
 
 Recall from \S~\ref{sec:GeomColSurf} that a morphism between (geometric) coloured surfaces $(\Sigma,\mbf{Walls},C_\cdot)\to (\Sigma',\mbf{Walls}',C_\cdot')$ is defined
	to be an orientation preserving embedding of surfaces $\Sigma\to\Sigma'$ which maps domain walls coloured by a given coisotropic subgroup of $G$ into domain walls coloured by the same coisotropic subgroup. If both coloured surfaces are equipped with skeleta, we say that a morphism is compatible with the skeleta if it preserves the parity of the domain walls, and the image of any bone in $\Sigma$ is homotopic (relative the domain walls) to a bone in $\Sigma'$. We call the resulting category $\sCol^{geom}$ the category of \emph{geometric coloured surfaces equipped with skeleta}.
	
	It is clear that \eqref{eq:SkelColSurfGeom} extends to a \emph{combinatorialization} functor 
	\begin{equation}K:\sCol^{geom}\to \sCol\end{equation}
	\end{subequations} from the category  of (geometric) coloured surfaces equipped with skeleta to the category of (combinatorial) skeletized coloured surfaces (in the sense of Definition~\ref{def:SkelColSurf}). 
 
 Conversely, given a (combinatorial) skeletized coloured surface $(\Gamma^<,C_\cdot,X_\cdot)$, we have the following construction: Given an ordered morphism of finite sets, $p:I\to J$, let $\Sigma_p$ denote the surface obtained by blowing up a polygonal representation of $p$ at the corner of every polygon. We may equip $\Sigma_p$ with a choice of `half-bones' indexed by $I$: for each $i\in I$ a half bone is an embedded line segment connecting the midpoint the boundary segment of $\Sigma_p$ labelled by $i$ to the boundary segment labelled by $p(i)$; we assume that these half-bones a mutually disjoint. For example, if the ordered morphism is as pictured in \eqref{eq:ordMorphP}, then $\Sigma_p$ is
 \begin{center}
\begingroup%
  \makeatletter%
  \providecommand\color[2][]{%
    \errmessage{(Inkscape) Color is used for the text in Inkscape, but the package 'color.sty' is not loaded}%
    \renewcommand\color[2][]{}%
  }%
  \providecommand\transparent[1]{%
    \errmessage{(Inkscape) Transparency is used (non-zero) for the text in Inkscape, but the package 'transparent.sty' is not loaded}%
    \renewcommand\transparent[1]{}%
  }%
  \providecommand\rotatebox[2]{#2}%
  \ifx\svgwidth\undefined%
    \setlength{\unitlength}{328.86054688bp}%
    \ifx\svgscale\undefined%
      \relax%
    \else%
      \setlength{\unitlength}{\unitlength * \real{\svgscale}}%
    \fi%
  \else%
    \setlength{\unitlength}{\svgwidth}%
  \fi%
  \global\let\svgwidth\undefined%
  \global\let\svgscale\undefined%
  \makeatother%
  \begin{picture}(1,0.36567722)%
    \put(0.5286453,0.02935089){\color[rgb]{0,0,0}\makebox(0,0)[b]{\smash{$\Sigma_p$}}}%
    \put(0,0){\includegraphics[width=\unitlength,page=1]{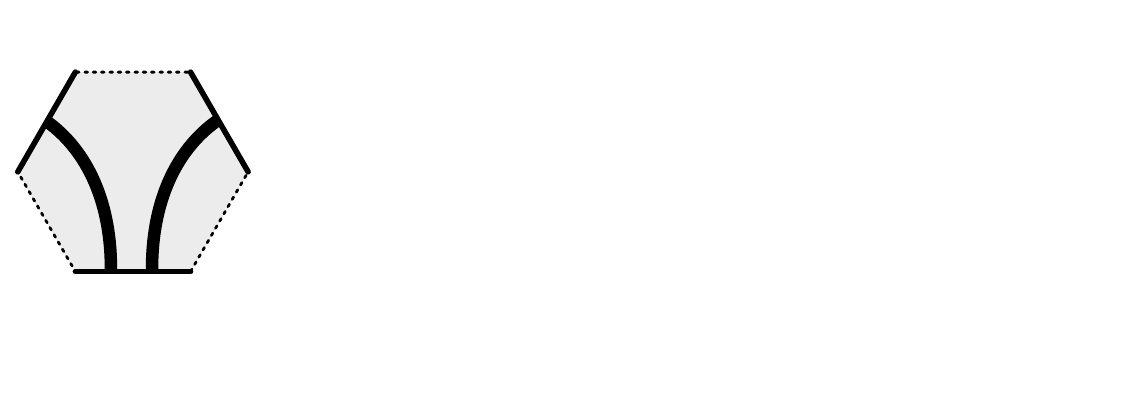}}%
    \put(0.11439048,0.10154516){\color[rgb]{0,0,0}\makebox(0,0)[b]{\smash{$a$}}}%
    \put(0.02595372,0.25858435){\color[rgb]{0,0,0}\makebox(0,0)[rb]{\smash{$2_a$}}}%
    \put(0.20332775,0.25858435){\color[rgb]{0,0,0}\makebox(0,0)[lb]{\smash{$1_a$}}}%
    \put(0,0){\includegraphics[width=\unitlength,page=2]{SigmaP.pdf}}%
    \put(0.87776129,0.09882777){\color[rgb]{0,0,0}\makebox(0,0)[b]{\smash{$c$}}}%
    \put(0.87776129,0.32263086){\color[rgb]{0,0,0}\makebox(0,0)[b]{\smash{$1_c$}}}%
    \put(0,0){\includegraphics[width=\unitlength,page=3]{SigmaP.pdf}}%
    \put(0.52979362,0.08433504){\color[rgb]{0,0,0}\makebox(0,0)[b]{\smash{$b$}}}%
    \put(0.42267148,0.20242493){\color[rgb]{0,0,0}\makebox(0,0)[rb]{\smash{$3_b$}}}%
    \put(0.64121921,0.20242493){\color[rgb]{0,0,0}\makebox(0,0)[lb]{\smash{$1_b$}}}%
    \put(0.53443827,0.32708863){\color[rgb]{0,0,0}\makebox(0,0)[b]{\smash{$2_b$}}}%
    \put(0,0){\includegraphics[width=\unitlength,page=4]{SigmaP.pdf}}%
  \end{picture}%
\endgroup%
	
 \end{center}

Gluing the surfaces $\Sigma_{\Gamma(\mbf{i}_+)}$ and $\Sigma_{\overline{\Gamma(\mbf{i}_+)}}$ along the boundary segments $\mbf{h}_+(e)$ and $\mbf{h}_-(e)$ (respectively) for each $e\in E_\Gamma$ results in an oriented surface, which we denote by $\Sigma_{\Gamma^<}$. For each vertex $v\in V_\Gamma^\pm$, we may interpret the boundary segment labelled by $v$ as a domain wall coloured by $C_v$; and for each edge $e\in E_\Gamma$, the half-bones corresponding to $\mbf{h}_-(e)$ and $\mbf{h}_+(e)$ concatenate to define a bone which connects the domain walls $\mbf{i}_-\circ\mbf{h}_-(e)$ and $\mbf{i}_+\circ\mbf{h}_+(e)$. In particular, if there are no widowed half edges, and $X_e=G$ for each $e\in E_\Gamma$, it follows that $(\Sigma_{\Gamma^<},\mbf{Walls}^\pm\cong V_\Gamma^\pm, \mbf{Bones}\cong E_\Gamma, C_\cdot)$ is a (geometric) skeletized coloured surface.

If there are widowed half edges, but $X_e=G$ for each $e\in E_\Gamma$, we may still represent the (combinatorial) skeletized coloured surface $(\Gamma^<,C_\cdot,X_\cdot)$ by colouring the boundary segments of $\Sigma_{\Gamma^<}$ corresponding to the vertices appropriately, and for each widowed half edge $h\in H_\Gamma^{\pm,w}$, we will draw a node at the free tip of the corresponding half-bone and label the node by $X_h$. For example, suppose $\Gamma^<$ is the graph with no negative half edges (and hence no edges or negative vertices), two positive half edges $(1,2)$, and one positive vertex $v$, so that $\Sigma_{\Gamma^<}$ is as in Figure~\ref{fig:SigG1}
\begin{figure}[h!]
\begin{center}
\begin{subfigure}{.4\textwidth}
\centering
	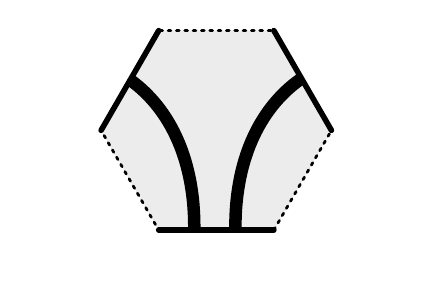 
\caption{\label{fig:SigG1}}
\end{subfigure}
\begin{subfigure}{.4\textwidth}
\centering
	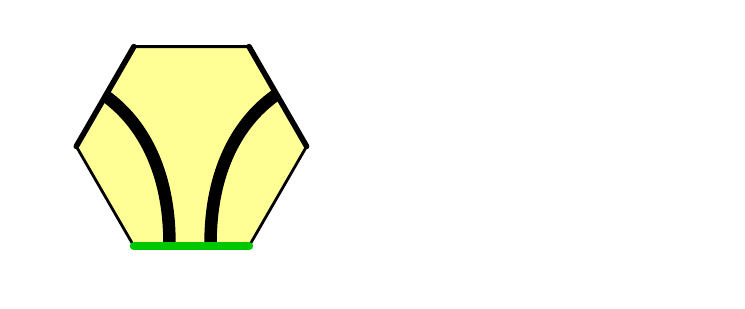 
\caption{\label{fig:SigG2}A pictorial representation of a (combinatorial) skeletized coloured surface which has widowed half-edges.}
\end{subfigure}
\end{center}
\caption{}
\end{figure}
 then we would represent $(\Gamma^<,C_\cdot,X_\cdot)$ as in Figure~\ref{fig:SigG2}. As before, morphisms of such (combinatorial) skeletized coloured surfaces correspond to appropriate embeddings of the corresponding (geometric) coloured surfaces.

\end{remark}

\subsection{The ringed space $\mc{M}_\Gamma^\hbar$  associated to a skeletized coloured surface}\label{sec:theAlgebra}

We let $\K\Alg_\hbar$ denote the category of associative deformations of commutative $\K$-algebras, formally parametrized by $\hbar$. Explicitly, it is the full subcategory of associative algebras over $\mbb{K}[\![\hbar]\!]$ which is spanned by algebras which are topologically free as modules over $\K[\![\hbar]\!]$,\footnote{That is, they are separated, complete, and torsion free.} and which become commutative at $\hbar=0$. The topological tensor product equips $\K\Alg_\hbar$ with a monoidal structure. Moreover, the forgetful functor $$\K\Alg_\hbar\xrightarrow{A_\hbar\to A_{\hbar=0}}\K\CAlg$$ is strictly monoidal. We let $\Top_{\K\Alg_\hbar}$ denote the category of topological spaces equipped with sheaves in $\K\Alg_\hbar$, it inherits a monoidal structure from the cartesian product (on the level of spaces), and the topological tensor product (on the level of sheaves of rings). Notice that the forgetful functor 
\begin{equation*}\Top_{\K\Alg_\hbar}\xhookrightarrow{X^\hbar\mapsto X^{\hbar=0}} \Top_{\K\CAlg}\end{equation*}
 is  monoidal.

	For any $X^\hbar\in \Top_{\K\Alg_\hbar}$, the underlying commutative ringed space $X:=X^{\hbar=0}$ carries a natural Poisson bracket: if $f,g\in \mc{O}_{X}(U)$ (where $U\subseteq X$ is an open set), then 
	\begin{subequations}\label{eq:ForgToPoiss}\begin{equation}\label{eq:PBrack}\{f,g\}:=\frac{\tilde f \tilde g-\tilde g\tilde f}{\hbar}\pmod\hbar,\end{equation}
	 where $\tilde f,\tilde g\in \mc{O}_{X^{\hbar}}(U)$ are any lifts of $f,g$. We say that $X\xhookrightarrow{\hbar=0} X^\hbar$ is a deformation quantization (of this Poisson structure on $X$). We let $\K\CPoiss$ denote the category of commutative Poisson algebras over $\K$, and $\Top_{\K\CPoiss}$ denote the category of Poisson ringed spaces, i.e. spaces equipped with a $\K\CPoiss$-valued presheaf of functions, whose underling $\K\CAlg$-valued presheaf is actually a sheaf. Then \eqref{eq:PBrack} defines a functor:
\begin{equation}\Top_{\K\Alg_\hbar}\xhookrightarrow{X^\hbar\mapsto (X^{\hbar=0},\{\cdot,\cdot\})} \Top_{\K\CPoiss}\end{equation}
	\end{subequations}

In this section, we construct a lift $\Gamma\to \mc{M}_\Gamma^\hbar$ of the classical moduli space functor \eqref{eq:ClassModSpace}, which we call the \emph{quantized moduli space} functor:
\begin{equation}\label{eq:QuantModuli}\begin{tikzpicture}
	\node (skg) at (-4,0) {$\sCol^{geom}$};
	\node (sk) at (0,0) {$\sCol$};
	\node (kh) at (6,2) {$\Top_{\K\Alg_\hbar}$};
	\node (k) at (6,0) {$\Top_{\K\CAlg}$};
	\draw[->] (sk) -- node[swap] {$\Gamma\to \mc{M}_\Gamma$} (k);
	\draw[->] (kh) -- (k);
	\draw[->,dashed] (sk) -- node {$\Gamma\to \mc{M}_{\Gamma}^\hbar$} (kh);
	\draw[->] (skg) -- (sk);
\end{tikzpicture}\end{equation}

We begin by noting that since $\FSet^<$ and $\FSet^{(<)}$ are biequivalent as 2-categories, we may assume that every ordered morphism is canonically and coherently equipped with a parenthesization (for example, we may choose the standard parenthesization). In particular, the strict (quantum) fusion  2-functor induces a pseudo-functor from $\FSet^<$ to $\MCat$ which we also denote by $F^{(-)}$, and which is canonically defined up to a unique pseudo-natural isomorphism. As a result, we are at liberty to implicitly assume that parenthesizations have been coherently\footnote{That is, up to canonical 2-morphisms.} chosen in what follows.

Suppose that $(\Gamma^<,C_\cdot,X_\cdot)$ is a skeletized coloured surface, and let $X_\Gamma$, $C_\Gamma$ and $\mc{M}_\Gamma$ be as in \eqref{eq:XCMGamma}. 


First, let $C^\circ_\Gamma\subseteq C_\Gamma$ denote the identity component and define $\mc{M}^\circ_\Gamma= X_\Gamma/C^\circ_\Gamma$  (in the category of commutative-ringed spaces) with the quotient map ${\pi_\circ}:X_\Gamma\to \mc{M}^\circ_\Gamma$.

Notice that for any open set $V\subseteq X_\Gamma$, the ring $\mc{O}_{X_\Gamma}(V)$ is a $\g^{\mbf{H}^+_\Gamma}\oplus\bar\g^{\mbf{H}^-_\Gamma}$-quasi-Poisson-commutative algebra. In particular, for $V={\pi_\circ}^{-1}(W)$ (where $W\subseteq \abs{\mc{M}_\Gamma^\circ}$ is open), 
$$\mc{O}_{X_\Gamma}(V)\in U\big(\g^{\mbf{H}^+_\Gamma}\oplus\bar\g^{\mbf{H}^-_\Gamma}\big)\Mod^\Phi_\hbar$$ is a commutative associative algebra, where the product is the original (undeformed) product (cf. \cite[Proposition~1]{LiBland:2014da}).

 Fusing the $\g^{\mbf{H}^+_\Gamma}$ factors along $\mbf{i}_+:\mbf{H}_\Gamma^+\to \mbf{V}_\Gamma^+$ and the $\bar\g^{\mbf{H}^-_\Gamma}$ factors along $\mbf{i}_-:\mbf{H}_\Gamma^-\to \mbf{V}_\Gamma^-$ yields an associative algebra $$F^{\mbf{i}_+}\circ F^{\mbf{i}_-}(\mc{O}_{X_\Gamma}(V))\in U\big(\g^{\mbf{V}_\Gamma^+}\oplus\bar\g^{\mbf{V}_\Gamma^-}\big)\Mod^\Phi_\hbar$$
Explicitly, the multiplication on $F^{\mbf{i}_+}\circ F^{\mbf{i}_-}(\mc{O}_{X_\Gamma}(V))=F^{\mbf{i}_+\sqcup\mbf{i}_-}(\mc{O}_{X_\Gamma}(V))$ is \begin{equation}\label{eq:multGamma}m_\Gamma^\hbar:=\big(m_\Gamma\big)\circ( \nu^{\mbf{i}_+}\otimes  \nu^{\mbf{i}_-}),\end{equation} where $m_\Gamma:\mc{O}_{X_\Gamma}(V)\otimes \mc{O}_{X_\Gamma}(V)\to \mc{O}_{X_\Gamma}(V)$ is the original multiplication on $\mc{O}_{X_\Gamma}(V)$, and $\nu^{\mbf{i}_\pm}$ are as in \eqref{eq:qFusionForp}.

Now the Lie algebra $$\mf{c}_\Gamma:=\{(\xi_\cdot)\in \g^{\mbf{V}_\Gamma}\mid \xi_v\in \mf{c}_v\}\subseteq \g^{\mbf{V}_\Gamma^+}\oplus \bar\g^{\mbf{V}_\Gamma^-}$$
of $C_\Gamma$ is a coisotropic subalgebra. Thus, the $\mf{c}_\Gamma$ invariant subspace $$\mf{c}_\Gamma\text{-invariants}\bigg(F^{\mbf{i}_+}\circ F^{\mbf{i}_-}\big(\mc{O}_{X_\Gamma}(V)\big)\bigg)$$
is an associative algebra (in the category of vector spaces), whose multiplication depends on the formal parameter $\hbar$. We define
\begin{subequations}\label{eq:AlgFormula}\begin{equation}\mc{O}_{\mc{M}_{\Gamma}^{\hbar,\circ}}(W):=\mf{c}_\Gamma\text{-invariants}\bigg(F_\g^{\mbf{i}_+}\circ F_{\bar\g}^{\mbf{i}_-}\big(\mc{O}_{X_\Gamma}({\pi_\circ}^{-1}(W))\big)\bigg)[\![\hbar]\!]\end{equation} by extending the multiplication to be $\hbar$-linear.
As a $\mbb{K}[\![\hbar]\!]$-module, \begin{equation}\label{eq:PreSheafVect}\mc{O}_{\mc{M}_{\Gamma}^{\hbar,\circ}}(W)= \big(\mc{O}_{X_\Gamma}({\pi_\circ}^{-1}(W))\big)^{\mf{c}_\Gamma}[\![\hbar]\!]=\mc{O}_{\mc{M}_\Gamma^\circ}(W)[\![\hbar]\!],\end{equation}\end{subequations} where $\mf{c}_\Gamma$ acts via $(\mbf{i}_+^!\oplus\mbf{i}_-^!):\g^{\mbf{V}_\Gamma^+\sqcup \mbf{V}_\Gamma^-}\to  \g^{\mbf{H}^+_\Gamma\sqcup \mbf{E}_\Gamma}\oplus\bar\g^{\mbf{H}^-_\Gamma\sqcup \mbf{E}_\Gamma}$, and the multiplication is given by \eqref{eq:multGamma}.\footnote{Since the $\nu^{\mbf{i}_\pm}$ appearing in \eqref{eq:multGamma} depends explicitly on the choice of parenthesization on $\mbf{i}_\pm:\mbf{H}_\Gamma^\pm\to \mbf{V}_\Gamma^\pm$, so does the algebra \eqref{eq:AlgFormula}. However, Theorem~\ref{thm:CohAss} as well as the biequivalence between $\FSet^{(<)}$ and $\FSet^<$, means that for any two choices of parenthesization, the resulting algebras are canoncially isomorphic as $\mbb{K}[\![\hbar]\!]$ algebras. Since parenthesizations can be chosen canonically (for example, one may always take the standard parenthesization), one may interpret this as follows: The algebra $\mc{O}_{\mc{M}_{\Gamma}^{\hbar,\circ}}(W)$ is canonically defined as a $\K[\![\hbar]\!]$ algebra, and any choice of parenthesization of $\mbf{i}_\pm$ defines a trivialization $\mc{O}_{\mc{M}_{\Gamma}^{\hbar,\circ}}(W)\cong \mc{O}_{\mc{M}_\Gamma^\circ}(W)[\![\hbar]\!]$ as a free $\mbb{K}[\![\hbar]\!]$ module.}


Now \eqref{eq:AlgFormula} depends naturally on the open subset $W\subseteq \abs{\mc{M}^\circ_\Gamma}$, so that $\mc{O}_{\mc{M}_{\Gamma}^{\hbar,\circ}}(W)$ defines a presheaf of associative algebras over $\abs{\mc{M}_\Gamma^\circ}$. Moreover the isomorphism \eqref{eq:PreSheafVect} implies that $\mc{O}_{\mc{M}_{\Gamma}^{\hbar,\circ}}$ is a sheaf of $\mbb{K}[\![\hbar]\!]$-modules, and hence also of algebras. Thus, $(\abs{\mc{M}_\Gamma^\circ}, \mc{O}_{\mc{M}_{\Gamma}^{\hbar,\circ}})$ is a (non-commutative) ringed space, flat over $\on{Spec}(\mbb{K}[\![\hbar]\!])$, whose fibre over $\hbar=0$ is the quotient $\mc{M}_\Gamma^\circ=X_\Gamma/C^\circ_\Gamma$. 

In the event that $C_\Gamma$ is not connected, there is a natural action of the discrete group $C_\Gamma/C^\circ_\Gamma$ on $(\abs{\mc{M}_\Gamma^\circ}, \mc{O}_{\mc{M}_{\Gamma}^{\hbar,\circ}})$, and we define $\mc{M}_\Gamma^\hbar$ as the corresponding quotient in the category of (possibly non-commutative) ringed spaces.

\begin{Theorem}\label{thm:AlgFunct}
The map $(\Gamma^<,C_\cdot,X_\cdot)\mapsto \mc{M}_\Gamma^\hbar$ extends to a contravariant monoidal functor 
$$\sCol\xrightarrow{\Gamma\to \mc{M}_\Gamma^\hbar} \Top_{\K\Alg_\hbar}$$
fitting into the diagram \eqref{eq:QuantModuli}, where the monoidal structure on $\sCol$ is given by disjoint union.
\end{Theorem}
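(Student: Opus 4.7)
The plan is to reduce the functoriality and monoidality of $\Gamma\mapsto \mc{M}_\Gamma^\hbar$ to three previously-established pieces: Theorem~\ref{thm:CohAss} (strict $2$-functoriality of quantum fusion), Theorem~\ref{thm:RedToRedFus} (compatibility of coisotropic reduction with fusion), and the (contravariant) functoriality of the classical construction \eqref{eq:ClassModSpace}. The object-level definition of $\mc{M}_\Gamma^\hbar$ has already been given in \S\ref{sec:theAlgebra}, so one must (i)~define the action on morphisms, (ii)~verify strict functoriality and commutativity of diagram \eqref{eq:QuantModuli}, and (iii)~verify monoidality with respect to disjoint union.

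For (i) and (ii), given $\phi\colon(\Gamma^<,C_\cdot,X_\cdot)\to (\Gamma'^<,C'_\cdot,X'_\cdot)$ in $\sCol$, the data of $\phi$ supplies equivariant morphisms $X'_{\phi(e)}\to X_e$ and $X'_{\phi(h)}\to X_h$, together with group homomorphisms $C'_{\phi(v)}\to C_v$; taking products yields a $C'_{\Gamma'}\to C_\Gamma$-equivariant morphism $X'_{\Gamma'}\to X_\Gamma$, hence a continuous map $\abs{\mc{M}^\circ_{\Gamma'}}\to\abs{\mc{M}^\circ_\Gamma}$ and a $(\g^{\mbf{H}^+}\oplus\bar\g^{\mbf{H}^-})$-equivariant pullback morphism $\mc{O}_{X_\Gamma}\to \phi_*\mc{O}_{X'_{\Gamma'}}$ of sheaves of commutative algebras. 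The crux is showing this pullback intertwines the deformed multiplications \eqref{eq:multGamma}. Because $\phi$ is a natural transformation of ciliated graphs, $\phi_{\mbf{V}^\pm}\circ \mbf{i}_\pm^\Gamma = \mbf{i}_\pm^{\Gamma'}\circ \phi_{\mbf{H}^\pm}$ as ordered morphisms (and hence, after coherently choosing parenthesizations, as $1$-morphisms in $\FSet^{(<)}$); applying the strict $2$-functor $F^{(-)}$ of Theorem~\ref{thm:CohAss} to this commuting square identifies $\nu^{\mbf{i}_\pm^\Gamma}$ and $\nu^{\mbf{i}_\pm^{\Gamma'}}$ compatibly under the pullback, so the pullback intertwines $m_\Gamma^\hbar$ and $m_{\Gamma'}^\hbar$. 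The induced Lie algebra map $\mf{c}'_{\Gamma'}\to\mf{c}_\Gamma$ sends $\mf{c}_\Gamma$-invariants to $\mf{c}'_{\Gamma'}$-invariants; extending $\hbar$-linearly gives a morphism $\mc{O}_{\mc{M}_\Gamma^{\hbar,\circ}}\to \phi_* \mc{O}_{\mc{M}_{\Gamma'}^{\hbar,\circ}}$ of sheaves of $\K[\![\hbar]\!]$-algebras, and quotienting by the discrete components $C_\Gamma/C^\circ_\Gamma$ and $C'_{\Gamma'}/C'^\circ_{\Gamma'}$ delivers the required morphism $\mc{M}_{\Gamma'}^\hbar\to \mc{M}_\Gamma^\hbar$. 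Strict contravariant functoriality in $\phi$ then follows from the strict $2$-functoriality of $F^{(-)}$ and the strict functoriality of invariants, and setting $\hbar=0$ trivializes every $\nu$ and recovers $\mc{M}_\Gamma$, so diagram \eqref{eq:QuantModuli} commutes.

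For (iii), on the disjoint union $\Gamma\sqcup\Gamma'$ all data split as products: $X_{\Gamma\sqcup\Gamma'}=X_\Gamma\times X_{\Gamma'}$, $C_{\Gamma\sqcup\Gamma'}=C_\Gamma\times C_{\Gamma'}$, and $\mbf{i}_\pm^{\Gamma\sqcup\Gamma'}=\mbf{i}_\pm^\Gamma\sqcup\mbf{i}_\pm^{\Gamma'}$ (equipped with the concatenated parenthesization). The locality-in-space property of quantum fusion, invoked in the proof of Theorem~\ref{thm:CohAss} via \cite[Lem.~3.5]{BarNatan:1998he}, gives $\nu^{\mbf{i}_\pm^{\Gamma\sqcup\Gamma'}}=\nu^{\mbf{i}_\pm^\Gamma}\otimes\nu^{\mbf{i}_\pm^{\Gamma'}}$, and the tensor-product splitting of invariants under $\mf{c}_\Gamma\oplus\mf{c}_{\Gamma'}$ then produces a natural isomorphism $\mc{M}_{\Gamma\sqcup\Gamma'}^\hbar\cong \mc{M}_{\Gamma}^\hbar\otimes \mc{M}_{\Gamma'}^\hbar$, whose coherence axioms follow by the same mechanism. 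The main technical obstacle is Step~(ii): tracking precisely how the coherence element $\nu^{\mbf{i}_\pm}$ transforms under a morphism of ciliated graphs, since both the ordering on the fibres of $\mbf{i}_\pm$ and the choice of parenthesization enter its definition. I expect this to be forced by Theorem~\ref{thm:CohAss} together with the biequivalence $\FSet^{(<)}\simeq\FSet^<$, but it requires careful verification that the relevant squares in $\MCat$ commute on the nose rather than only up to the associators $\Phi^{p,p'}$.
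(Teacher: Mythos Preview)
Your high-level plan is right, and your use of Theorem~\ref{thm:CohAss} and Theorem~\ref{thm:RedToRedFus} matches the paper's. But there is a genuine gap in step~(ii). You assert that the $2$-functoriality of $F^{(-)}$ applied to the square $\phi_{\mbf{V}^\pm}\circ \mbf{i}_\pm^\Gamma = \mbf{i}_\pm^{\Gamma'}\circ \phi_{\mbf{H}^\pm}$ ``identifies $\nu^{\mbf{i}_\pm^\Gamma}$ and $\nu^{\mbf{i}_\pm^{\Gamma'}}$ compatibly under the pullback, so the pullback intertwines $m_\Gamma^\hbar$ and $m_{\Gamma'}^\hbar$.'' This does not follow. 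Theorem~\ref{thm:CohAss} tells you that the monoidal functors $F^{\phi_{\mbf{V}^\pm}}\circ F^{\mbf{i}_\pm^\Gamma}$ and $F^{\mbf{i}_\pm^{\Gamma'}}\circ F^{\phi_{\mbf{H}^\pm}}$ agree (up to the canonical $\Phi^{p,p'}$), but it says nothing about the particular sheaf morphism $\tilde f_\phi^\sharp:\mc{O}_{X_\Gamma}\to (\tilde f_\phi)_*\mc{O}_{X'_{\Gamma'}}$. The point is that $\tilde f_\phi$ is a \emph{diagonal} morphism---several half-edges of $\Gamma$ may map to the same half-edge of $\Gamma'$, so on rings $\tilde f_\phi^\sharp$ involves actual multiplication in $\mc{O}_{X'_{\Gamma'}}$. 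Showing that this is an algebra map for the \emph{fused} product on $F^{\phi_{\mbf H^\pm}}(\mc{O}_{X_\Gamma})$ versus the undeformed product on $\mc{O}_{X'_{\Gamma'}}$ requires a separate argument, namely Lemma~\ref{lem:DiagFusionMorph}, whose proof reduces to the identity $m^{(2n)}\circ\nu^p=m^{(2n)}$ and hence uses $\g$-quasi-Poisson commutativity (equivalently, coisotropy of the stabilizers) in an essential way. Nowhere in your sketch do you invoke this hypothesis, and without it the claim is false.

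Concretely, the paper does not compare $m_\Gamma^\hbar$ and $m_{\Gamma'}^\hbar$ directly along $\tilde f_\phi^\sharp$; instead it factors the morphism through the intermediate object $F^{\mbf{i}_\pm^{\Gamma'}\circ\phi_{\mbf H^\pm}}(\mc{O}_{X_\Gamma})\cong F^{\phi_{\mbf V^\pm}\circ\mbf{i}_\pm^\Gamma}(\mc{O}_{X_\Gamma})$. One leg, $r_{\phi_{\mbf V}}$, is the inclusion of $\mf c_\Gamma$-invariants into $\mf c'_{\Gamma'}$-invariants after further fusing along $\phi_{\mbf V^\pm}$, and \emph{this} is where Theorem~\ref{thm:RedToRedFus} is needed to know the inclusion is multiplicative for the deformed product (your sentence ``the induced Lie algebra map $\mf c'_{\Gamma'}\to\mf c_\Gamma$ sends $\mf c_\Gamma$-invariants to $\mf c'_{\Gamma'}$-invariants'' is only the $\hbar=0$ statement). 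The other leg is $F^{\mbf{i}_\pm^{\Gamma'}}(\tilde f_\phi^\sharp)$, and \emph{this} is where Lemma~\ref{lem:DiagFusionMorph} is needed. Your final paragraph correctly senses that step~(ii) is the obstacle, but the resolution is not a finer coherence check in $\MCat$; it is an honest computation using the coisotropic-stabilizer hypothesis.
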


	We start with a Lemma. Suppose that $p:I\to J$ is an ordered morphism of finite sets, for every $(i,j)\in I\times J$, $\g$ acts on given $\K$-schemes $Y_i,X_j$ with coisotropic stabilizers, and for every $i\in I$, $f_i:X_{p(i)}\to Y_i$ is a $\g$-equivariant morphism. 
	Let $X=\prod_{j\in J} X_j$ and $Y=\prod_{i\in I} Y_i$, and \begin{equation}\label{eq:fDiag}f:=\prod_{i\in I}f_i:X\xrightarrow{\{x_j\}_{j\in J}\mapsto\{y_i:=f_i(x_{p(i)})\}_{i\in I}} Y\end{equation} denote the diagonal morphism, and let $f^\sharp:\mc{O}_Y\to f_*\mc{O}_X$  denote the underlying morphism of the sheaves of rings.
	
	Theorem~\ref{thm:CohAss} implies that, for any parenthesization of the ordered morphism $p:I\to J$ and any open set $V\subseteq Y$, $F^{p}\big(\mc{O}_Y(V)\big)$ with the original multiplication precomposed with $\nu^p$ is an associative algebra in $U(\g^J)\Mod^\Phi_\hbar$. Similarly, for any open set $U\subseteq X$, $\mc{O}_X(U)$ (with the original multiplication) is also an associative algebra in $U(\g^J)\Mod^\Phi_\hbar$. In particular, $(\abs{X},\mc{O}_X)$ and $\big(\abs{Y},F^{p}\big(\mc{O}_Y\big)\big)$ are spaces equipped with sheaves of rings in $U(\g^J)\Mod^\Phi_\hbar$.
\begin{Lemma}\label{lem:DiagFusionMorph}
The map $(f,f^\sharp):(\abs{X},\mc{O}_X)\to \big(\abs{Y},F^{p}\big(\mc{O}_Y\big)\big)$ is a morphism of spaces equipped with sheaves of rings in $U(\g^J)\Mod^\Phi_\hbar$.
\end{Lemma}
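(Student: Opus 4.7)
The strategy is to factor $f$ through an intermediate scheme and to reduce the algebra-morphism condition to a braid-invariance statement for the multiplication on a quasi-Poisson-commutative algebra. Concretely, I would first factor $f:X\to Y$ as $X \xrightarrow{\Delta} \tilde X \xrightarrow{\prod_i f_i} Y$, where $\tilde X := \prod_{i\in I} X_{p(i)}$ and $\Delta\bigl(\{x_j\}_j\bigr) := \{x_{p(i)}\}_i$ is the diagonal duplicating factors along $p$. Dually, $f^\sharp = \Delta^\sharp \circ \tilde f^\sharp$ where $\tilde f^\sharp := \bigotimes_i f_i^\sharp : \mc{O}_Y \to \mc{O}_{\tilde X}$. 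Since each $f_i$ is $\g$-equivariant and $\mc{O}_{\tilde X}$ inherits the coisotropic-stabilizer condition as a product, $\tilde f^\sharp$ is a $\g^I$-equivariant morphism of commutative algebras; by \cite[Proposition~1]{LiBland:2014da} both $\mc{O}_Y$ and $\mc{O}_{\tilde X}$ are commutative algebras in $U(\g^I)\Mod^\Phi_\hbar$. Applying the monoidal fusion functor $F^p$ of Theorem~\ref{thm:CohAss}, one then obtains that $F^p(\tilde f^\sharp) : F^p(\mc{O}_Y) \to F^p(\mc{O}_{\tilde X})$ is an algebra morphism in $U(\g^J)\Mod^\Phi_\hbar$, thereby reducing the lemma to showing that $\Delta^\sharp : F^p(\mc{O}_{\tilde X}) \to \mc{O}_X$ is itself an algebra morphism in $U(\g^J)\Mod^\Phi_\hbar$.

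The $\g^J$-equivariance of $\Delta^\sharp$ is immediate once $\mc{O}_{\tilde X}$ is regarded as a $\g^J$-module via $p^!$. For the algebra compatibility, I would unpack the identity $m_X \circ (\Delta^\sharp \otimes \Delta^\sharp) = \Delta^\sharp \circ m_{\tilde X} \circ (\nu^p \cdot)$: since $\Delta^\sharp$ intertwines the classical commutative multiplications, the left-hand side equals $\Delta^\sharp \circ m_{\tilde X}$, so the identity further reduces to $\Delta^\sharp \circ m_{\tilde X} \circ ((\nu^p - 1)\cdot) = 0$. The composite $\Delta^\sharp \circ m_{\tilde X}$ is the total multiplication which, for each $j\in J$, collapses all $2|I_j|$ copies of $\mc{O}_{X_j}$ (from the two tensor copies of the $I_j$-fiber) into a single $\mc{O}_{X_j}$. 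Writing $\nu^p = \prod_{j} K_j$ with $K_j \in U(\g^{I_j})^{\otimes 2}[\![\hbar]\!]$ the element representing the parenthesized fusion braid on the $I_j$-strands, it therefore suffices to check, for each $j$ separately, that the total multiplication $\mc{O}_{X_j}^{\otimes 2|I_j|} \to \mc{O}_{X_j}$ is invariant under the action of $K_j$.

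This last point is the conceptual crux, and the step I expect to be the main obstacle. By \cite[Proposition~1]{LiBland:2014da}, the coisotropic-stabilizer hypothesis makes $\mc{O}_{X_j}$ a commutative algebra in the braided monoidal category $U(\g)\Mod^\Phi_\hbar$. In any braided monoidal category, an iterated product $A^{\otimes m}\to A$ of a commutative algebra $A$ is automatically invariant under the braid-group action, since $m\circ \sigma_{A,A}=m$ propagates inductively through any parenthesized braid and also absorbs the associator $\Phi$. Applied to $A=\mc{O}_{X_j}$ and the braid encoded by $K_j$, this yields the required identity. The delicate point is that the first-order vanishing follows directly from the coisotropic identity $\sum_a (\xi_a\cdot\alpha)(\eta_a\cdot\beta) = 0$, whereas the higher-order contributions of $K_j$ rely on the full braided-commutativity of $\mc{O}_{X_j}$; the clean passage between these two facts is exactly what is supplied by \cite[Proposition~1]{LiBland:2014da}.
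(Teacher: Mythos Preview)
Your proposal is correct and follows essentially the same route as the paper: factor through the diagonal $X\to\prod_{i}X_{p(i)}$, reduce to showing that the iterated multiplication on a single factor $\mc{O}_{X_j}$ absorbs the fusion element $K_j$, and deduce this from $\g$-quasi-Poisson commutativity. The paper carries out the same reduction (after first specializing to $J=\{\ast\}$ and passing to the affine case) and then states the key identity $m^{(2n)}\circ\nu^p=m^{(2n)}$ directly from quasi-Poisson commutativity; your version phrases that same step in categorical language, invoking braided commutativity of $\mc{O}_{X_j}$ in $U(\g)\Mod^\Phi_\hbar$ (which is exactly \cite[Proposition~1]{LiBland:2014da}) to conclude that the iterated product is invariant under the parenthesized braid encoded by $K_j$.
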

\begin{proof}
Firstly, without loss of generality, we may assume that $I=\{1,\dots,n\}$, $J=\{\ast\}$ is the one point set, and $p:I\to J$ is equipped with the standard order on $I=\{1,\dots,n\}$.
Secondly, by factoring $f:X=X_\ast\to Y=Y_1\times\cdots\times Y_n$ as the diagonal map $X\to X^n$ followed by $f_1\times\cdots f_n: X^n\to Y$, we may also assume without loss of generality that $Y_i=X$ for every $i$.
Lastly, it is sufficient to suppose that $X=\on{Spec}(A_X)$ is affine. 

Now the $k$-fold diagonal map $X\to X^k$ is dual to the $k$-fold multiplication $m_{A_X}^{(k)}:A_X^{\otimes k}\to A_X$. 

Thus, we need to show that 
\begin{equation}\label{eq:CommMultToShow}m_{A_X}\circ(m_{A_X}^{(n)}\otimes m_{A_X}^{(n)})=m_{A_X}^{(n)}\circ (m_{A_X})^{\otimes n}\circ \nu^p.\end{equation}
We may simplify both sides of \eqref{eq:CommMultToShow} using the identity $$m_{A_X}\circ(m_{A_X}^{(n)}\otimes m_{A_X}^{(n)})=m_{A_X}^{(2n)}=m_{A_X}^{(n)}\circ (m_{A_X})^{\otimes n}.$$
Now, $m_{A_X}^{(2n)}\circ\nu^p=m_{A_X}^{(2n)}$ since $A_X$ is $\g$-quasi-Poisson commutative, which implies \eqref{eq:CommMultToShow}.

\end{proof}
\begin{proof}[Proof of Theorem~\ref{thm:AlgFunct}]
Let $\phi:(\Gamma^<,C_\cdot,X_\cdot)\to (\Gamma'^<,C'_\cdot,X_\cdot')$ be a morphism of skeletized coloured surfaces. For simplicity, we will assume that each of the algebraic groups $C_\cdot$ and $C'_\cdot$ is connected.  
When we need to distinguish them, we let $\mbf{i}_\pm^\Gamma$ and $\mbf{i}_\pm^{\Gamma'}$ denote the incidence maps for $\Gamma$ and $\Gamma'$ respectively.


Let $X_\Gamma:=\prod_{e\in \mbf{E}_\Gamma\sqcup \mbf{H}_\Gamma^{\pm,w}} X_e$,  and 
 $X'_{\Gamma'}:=\prod_{e\in \mbf{E}_{\Gamma'}\sqcup \mbf{H}_{\Gamma'}^{\pm,w}} X'_e$ be as in \eqref{eq:XCMGamma}. For any  $e\in \mbf{E}_{\Gamma}\sqcup \mbf{H}_{\Gamma}^{\pm,w}$, we have a map $X_e\leftarrow X_{\phi(e)}$, and 
 these fit together to define a morphism of ringed spaces $(\abs{X_\Gamma},\mc{O}_{X_\Gamma})\leftarrow (\abs{X'_{\Gamma'}},\mc{O}_{X'_{\Gamma'}}):(\tilde{f}_\phi,\tilde{f}_\phi^\sharp)$, as in \eqref{eq:fDiag}.

Notice that for any open set $V\subseteq \abs{X_\Gamma}$, 
$$\mc{O}_{X_\Gamma}(V)\in U\big(\g^{\mbf{H}^+_\Gamma}\oplus\bar\g^{\mbf{H}^-_\Gamma}\big)\Mod^\Phi_\hbar$$ is a commutative associative algebra. We let $F^{\phi_{\mbf{H}^\pm}}\big(\mc{O}_{X_\Gamma}(V)\big):=F^{\phi_{\mbf{H}^+}}\circ F^{\phi_{\mbf{H}^-}}\big(\mc{O}_{X_\Gamma}(V)\big)$ denote the result of taking the quantum fusion of the $\g^{\mbf{H}_\Gamma^+}$ factors along the ordered morphism $\phi_{\mbf{H}^+}:\mbf{H}_\Gamma^+\to\mbf{H}_{\Gamma'}^+$ and the $\bar\g^{\mbf{H}_\Gamma^-}$ factors along the ordered morphism $\phi_{\mbf{H}^-}:\mbf{H}_\Gamma^-\to\mbf{H}_{\Gamma'}^-$.

Now Lemma~\ref{lem:DiagFusionMorph} implies that we have a morphism 
$$\big(\abs{X_\Gamma},F^{\phi_{\mbf{H}^\pm}}(\mc{O}_{X_\Gamma})\big)\leftarrow( \abs{X'_{\Gamma'}},\mc{O}_{X'_{\Gamma'}}):(\tilde{f}_\phi,\tilde{f}_\phi^\sharp)$$ of spaces equipped with sheaves of rings in $U\big(\g^{\mbf{H}^+_{\Gamma'}}\oplus\bar\g^{\mbf{H}^-_{\Gamma'}}\big)\Mod^\Phi_\hbar$.

Taking the quantum fusion with respect to the ordered morphisms $\mbf{i}^{\Gamma'}_+$ for the $\g^{\mbf{H}^+_{\Gamma'}}$ factors and $\mbf{i}^{\Gamma'}_-$ for the $\bar\g^{\mbf{H}^-_{\Gamma'}}$ factors yields a morphism
$$\big(\abs{X_\Gamma},F^{\mbf{i}_\pm^{\Gamma'}\circ\phi_{\mbf{H}^\pm}}(\mc{O}_{X_\Gamma})\big)\leftarrow\big( \abs{X'_{\Gamma'}},F^{\mbf{i}_\pm^{\Gamma'}}(\mc{O}_{X'_{\Gamma'}})\big):\big(\tilde{f}_\phi,F^{\mbf{i}_\pm^{\Gamma'}}(\tilde{f}_\phi^\sharp)\big)$$ of spaces equipped with sheaves of rings in $U\big(\g^{\mbf{V}^+_{\Gamma'}}\oplus\bar\g^{\mbf{V}^-_{\Gamma'}}\big)\Mod^\Phi_\hbar$ (where as before, we have used the shorthand $F^{\mbf{i}_\pm^{\Gamma'}\circ\phi_{\mbf{H}^\pm}}= F^{\mbf{i}_+^{\Gamma'}\circ\phi_{\mbf{H}^+}}\circ F^{\mbf{i}_-^{\Gamma'}\circ\phi_{\mbf{H}^-}}$ and $F^{\mbf{i}_\pm^{\Gamma'}}=F^{\mbf{i}_+^{\Gamma'}}\circ F^{\mbf{i}_-^{\Gamma'}}$).

However, since $\mbf{i}_\pm^{\Gamma'}\circ\phi_{\mbf{H}^\pm}= \phi_{\mbf{V}^\pm}\circ \mbf{i}_\pm^{\Gamma}$ as ordered morphisms, Theorem~\ref{thm:CohAss} implies that we have a canonical isomorphism
$$\big(\abs{X_\Gamma},F^{\phi_{\mbf{V}^\pm}\circ \mbf{i}_\pm^{\Gamma}}(\mc{O}_{X_\Gamma})\big)\cong \big(\abs{X_\Gamma},F^{\mbf{i}_\pm^{\Gamma'}\circ\phi_{\mbf{H}^\pm}}(\mc{O}_{X_\Gamma})\big)$$ as spaces equipped with sheaves of rings in $U\big(\g^{\mbf{V}^+_{\Gamma'}}\oplus\bar\g^{\mbf{V}^-_{\Gamma'}}\big)\Mod^\Phi_\hbar$.

Let $C_\Gamma:=\prod_{v\in \mbf{V}_\Gamma} C_v$, and 
 $C'_{\Gamma'}:=\prod_{v\in \mbf{V}_{\Gamma'}} C'_v$  be as in \eqref{eq:XCMGamma}, and let $\mf{c}_\Gamma$ and $\mf{c}'_{\Gamma'}$ denote the respective Lie algebras. Note that $\mf{c}'_{\Gamma'}\subseteq \g^{\mbf{V}_{\Gamma'}}$ acts on $X_\Gamma$ via the pullback $$\g^{\mbf{V}_{\Gamma'}}\xleftarrow{\{\xi_v:=\eta_{\phi_{\mbf{V}}(v)}\}_{v\in \mbf{V}_\Gamma}\mapsfrom\{\eta_{v'}\}_{v'\in \mbf{V}_{\Gamma'}}} \g^{\mbf{V}_{\Gamma}}:\phi_{\mbf{V}}^*.$$

Now, applying Theorem~\ref{thm:RedToRedFus} iteratively, for each $v'\in \mbf{V}_{\Gamma'}$, yields a morphism 
\begin{multline*}r_{\phi_{\mbf{V}}}:\mf{c}_{\Gamma}\text{-invariants}\big(F^{\mbf{i}_\pm^\Gamma}(\mc{O}_{X_\Gamma})\big)
\to \mf{c}'_{\Gamma'}\text{-invariants}\big( F^{\phi_{\mbf{V}^\pm}\circ \mbf{i}_\pm^{\Gamma}}(\mc{O}_{X_\Gamma})\big) 
\\\cong\mf{c}'_{\Gamma'}\text{-invariants}\big(F^{\mbf{i}_\pm^{\Gamma'}\circ\phi_{\mbf{H}^\pm}}(\mc{O}_{X_\Gamma})\big).\end{multline*} of sheaves  over $X_\Gamma$ of $\K[\![\hbar]\!]$ rings.

Let $\pi:X_\Gamma\to X_\Gamma/C_\Gamma=\mc{M}_\Gamma$ and $\pi':X'_{\Gamma'}\to X'_{\Gamma'}/C'_{\Gamma'}=\mc{M}_{\Gamma'}$ denote the quotient maps, and $\mc{M}_\Gamma\leftarrow \mc{M}_{\Gamma'}:f_\phi$ the quotient of $\tilde{f}_\phi$. Suppose that $W\subseteq \abs{\mc{M}_{\Gamma}}$ is an open set.
Since $$\mc{O}_{\mc{M}_\Gamma^\hbar}(W)= \mf{c}_{\Gamma}\text{-invariants}\big(F^{\mbf{i}_\pm^\Gamma}(\mc{O}_{X_\Gamma}(W))\big)[\![\hbar]\!],$$  while for any open set $W'\subseteq \abs{\mc{M}_{\Gamma'}}$
$$\mc{O}_{\mc{M}_{\Gamma'}^\hbar}(W')=\mf{c}'_{\Gamma'}\text{-invariants}\big(F^{\mbf{i}_\pm^{\Gamma'}}(\mc{O}_{X'_{\Gamma'}}(W'))\big)[\![\hbar]\!],$$
the composition $$f_\phi^\sharp(W):=F^{\mbf{i}^{\Gamma'}_\pm}(\tilde{f}_\phi^\sharp)\circ r_{\phi_{\mbf{V}}}:\mc{O}_{\mc{M}_\Gamma^\hbar}(W)\to\mc{O}_{\mc{M}_{\Gamma'}^\hbar}\big(f_\phi^{-1}(W)\big)$$ is a morphism of associative $\K[\![\hbar]\!]$-algebras. Since $f_\phi^\sharp(W)$ is natural in $W$, 
$$(\mc{M}_\Gamma,\mc{O}_{\mc{M}_\Gamma^\hbar})\leftarrow (\mc{M}_{\Gamma'},\mc{O}_{\mc{M}_{\Gamma'}^\hbar}):(f_\phi,f_\phi^\sharp)$$
is a morphism in  $\Top_{\K\Alg_\hbar}$.
Since $(f_\phi,f_\phi^\sharp)$ is natural in $\phi$, it follows that the association $\Gamma\to (\mc{M}_\Gamma,\mc{O}_{\mc{M}_\Gamma^\circ}^\hbar)$ defines a contravariant functor $\sCol\xrightarrow{\Gamma\to \mc{M}_\Gamma^\hbar} \Top_{\K\Alg_\hbar}$ which, by construction, fits into the diagram \eqref{eq:QuantModuli}.

The functor $\Gamma\to \mc{M}_\Gamma^\hbar$ is monoidal, by construction.
\end{proof}

\begin{remark}
There are obvious variants of skeletized coloured surfaces in which algebraic groups $G$ and $C_v\subseteq G$ are replaced by (complex) Lie groups Lie groups, and the schemes $X_e$ assigned to (half) edges are replaced by (complex holomorphic) manifolds; subject to the condition that the quotient  $\mc{M}_\Gamma:=X_\Gamma/C_\Gamma$ defined in \eqref{eq:XCMGamma} is well behaved (in the appropriate sense). Moreover, it is clear that the same proof yields an appropriate variant of Theorem~\ref{thm:AlgFunct} in both the smooth or holomorphic settings.
\end{remark}

\section{Applications}
%

\subsection{Geometric skeletized Coloured surfaces}
Suppose that $G$ is an affine algebraic group whose Lie algebra is $\g$. 

Suppose that $(\Sigma,\mbf{Walls},C_\cdot)$ is a (geometric) coloured surface, 
and let $\mc{M}(\Sigma,\mbf{Walls},\mf{c}_\cdot; G)$ denote the moduli space classifying pairs 
$$\big(P\to \Sigma, \{Q_\mbf{w}\to \mbf{w}\}_{\mbf{w}\in \mbf{Walls}}\big)$$ where $P\to \Sigma$ is a flat $G$-bundle, and for each domain wall $\mbf{w}\in \mbf{Walls}$, $Q_\mbf{w}\to \mbf{w}$ is a $C_\mbf{w}$-invariant subbundle. Then $\mc{M}(\Sigma,\mbf{Walls},\mf{c}_\cdot; G)$ carries a natural Poisson structure \cite{LiBland:2012vo,LiBland:2013ue} which can be described in terms of a Goldman-type bracket \cite{Goldman:1986eh,Goldman:1984hr}. 

Suppose that we equip the (geometric) coloured surface $(\Sigma,\mbf{Walls},\mf{c}_\cdot)$ with a skelata whose bones are  $\mbf{Bones}$, 
 then \cite{LiBland:2012vo} shows that we have a canonical identificiation
 \begin{equation}\mc{M}(\Sigma,\mbf{Walls},\mf{c}_\cdot; G)\cong\mc{M}_{K(\Sigma)},\end{equation}
 where $K(\Sigma)$ is as in \eqref{eq:CombFunc}. Moreover,
 \cite{LiBland:2014da} proves that the resulting inclusion
 $$\mc{M}(\Sigma,\mbf{Walls},\mf{c}_\cdot; G)\xhookrightarrow{\hbar=0}\mc{M}_{K(\Sigma)}^\hbar$$
 is a deformation quantization of the Poisson structure.
 In particular, to first order in $\hbar$, the ringed space $\mc{M}^\hbar_{K(\Sigma)}$ does not depend on either the choice of skeleton or the choice of associator. 

%
%
%
Now let $\Col^{geom}$ denote the category of (geometric) coloured surfaces (as defined in \S~\ref{sec:GeomColSurf}), whose objects are coloured surfaces $(\Sigma,\mbf{Walls},C_\cdot)$, and whose morphisms $$(\Sigma,\mbf{Walls},C_\cdot)\to (\Sigma',\mbf{Walls}',C_\cdot')$$ are embeddings $\Sigma\to\Sigma'$ compatible with the colouring. Then we have a forgetful functor $$F:\sCol^{geom}\xrightarrow{(\Sigma,\mbf{Walls},C_\cdot;\mbf{Bones})\to (\Sigma,\mbf{Walls},C_\cdot)}\Col^{geom}$$ from the category of (geometric) coloured surfaces equipped with skeleta to the category of (geometric) coloured surfaces. 

\begin{Theorem}\label{thm:CohQuant}
The following diagram of functors commutes
$$\begin{tikzpicture}
	\node (sC) at (-1,2) {$\sCol$};
	\node (sCg) at (-4,0) {$\sCol^{geom}$};
	\node (Cg) at (-4,-2) {$\Col^{geom}$};
	\node (kh) at (3,0)  {$\Top_{\K\Alg_\hbar}^{op}$};
	\node (kh2) at (3,-2)  {$\Top_{\K\CPoiss}^{op}$};
	\draw[->] (sC) -- (kh);
	\draw[->] (sCg) -- (kh);
	\draw[->] (Cg) -- (kh2);
	\draw[->] (kh) -- (kh2);
	\draw[->] (sCg) -- node {$K$} (sC);
	\draw[->] (sCg) -- node {$F$} (Cg);
\end{tikzpicture}$$
where the top triangle is as described in \S~\ref{sec:ColSurf}, the rightmost vertical arrow is as in \eqref{eq:ForgToPoiss}, and the bottom arrow sends a (geometric) coloured surface $(\Sigma,\mbf{Walls},C_\cdot)$ to the corresponding moduli space of flat bundles $\mc{M}(\Sigma,\mbf{Walls},\mf{c}_\cdot; G)$ with its Goldman-type Poisson structure \cite{LiBland:2012vo,LiBland:2013ue}.
	To summarize, $\Sigma\to \mc{M}^\hbar_{K(\Sigma)}$ is a functorial deformation quantization of the moduli spaces associated to (geometric) coloured surfaces equipped with skeleta.
	
	In particular, to coherently quantize a diagram $p:J\to \Col^{geom}$, it suffices to find a lift $\tilde p:J\to \sCol^{geom}$ (i.e. $p=F\circ \tilde p$).
\end{Theorem}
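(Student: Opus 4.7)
The plan is to assemble the theorem from three essentially independent pieces: the top triangle, the right-hand square at the level of the classical limit, and the final ``lifting'' remark.

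First, I would verify commutativity of the top triangle. The top arrow $\sCol \to \Top_{\K\Alg_\hbar}^{op}$ is the functor $\Gamma \to \mc{M}_\Gamma^\hbar$ constructed and proven functorial in Theorem~\ref{thm:AlgFunct}. The arrow $K:\sCol^{geom}\to\sCol$ is the combinatorialization functor defined in \eqref{eq:CombFunc}, and the diagonal arrow $\sCol^{geom}\to\Top_{\K\Alg_\hbar}^{op}$ is by definition the composite $\Sigma\mapsto \mc{M}_{K(\Sigma)}^\hbar$. Thus commutativity of the top triangle holds tautologically.

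Next, I would verify that the outer square commutes, i.e., that passing from $\mc{M}_{K(\Sigma)}^\hbar$ to its classical Poisson limit via \eqref{eq:ForgToPoiss} reproduces the Goldman-type Poisson ringed space $\mc{M}(\Sigma,\mbf{Walls},\mf{c}_\cdot;G)$. This is where I would invoke the prior work. First, \cite{LiBland:2012vo} provides a canonical identification $\mc{M}(\Sigma,\mbf{Walls},\mf{c}_\cdot;G)\cong \mc{M}_{K(\Sigma)}$ as schemes (or ringed spaces), which handles the underlying topological space and ring of functions at $\hbar=0$, since by \eqref{eq:PreSheafVect} we have $\mc{O}_{\mc{M}_{K(\Sigma)}^\hbar}/\hbar \cong \mc{O}_{\mc{M}_{K(\Sigma)}}$. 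Second, the commutator bracket \eqref{eq:PBrack} on $\mc{M}_{K(\Sigma)}^\hbar$ modulo $\hbar^2$ must be compared with the Goldman-type bracket from \cite{LiBland:2013ue}; the coincidence of these two Poisson structures is the content of the main comparison result of \cite{LiBland:2014da}. I would state this as a direct citation rather than reprove it, noting that the first-order term of the fusion coherence maps $\nu^{\mbf{i}_\pm}$ is precisely the classical $r$-matrix component of $t$, and the first-order term of the reduction step reproduces the standard moment-map reduction underlying Goldman's bracket. Naturality in embeddings is then automatic from functoriality already established in Theorem~\ref{thm:AlgFunct} together with the fact that the classical comparison is likewise natural. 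The only subtlety to be careful about is that the bottom horizontal arrow is only defined on geometric coloured surfaces whose skeleta exist, but since we factor through $\sCol^{geom}$ this is not an issue.

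The main obstacle is genuinely only bookkeeping: translating the fusion/reduction definition \eqref{eq:AlgFormula} of $\mc{O}_{\mc{M}_\Gamma^\hbar}$ into a coordinate-free statement at $\hbar=0$ compatible with the invariant description of the Goldman bracket on moduli of flat bundles with boundary reductions. All the essential ingredients are in \cite{LiBland:2012vo,LiBland:2013ue,LiBland:2014da}; what needs checking is simply that the citations are applied to the correct combinatorial presentation of $\Sigma$ given by $K$, and that the identification is natural in embeddings of skeletized coloured surfaces.

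Finally, the concluding assertion --- that to coherently quantize a diagram $p:J\to\Col^{geom}$ it suffices to produce a lift $\tilde p:J\to \sCol^{geom}$ with $F\circ\tilde p=p$ --- is a formal consequence of the commutativity just established: post-composing $\tilde p$ with the functor $\sCol^{geom}\to\Top_{\K\Alg_\hbar}^{op}$ yields a diagram in $\Top_{\K\Alg_\hbar}^{op}$ whose $\hbar=0$ reduction is exactly $p$ followed by the Goldman-type moduli functor. No further work is required beyond observing this.
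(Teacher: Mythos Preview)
Your proposal is correct and follows essentially the same approach as the paper, which proves the theorem in a single sentence by citing Theorem~\ref{thm:AlgFunct} together with the results of \cite{LiBland:2014da}. Your version simply unpacks that citation, spelling out that the top triangle commutes by definition, that the identification $\mc{M}(\Sigma,\mbf{Walls},\mf{c}_\cdot;G)\cong\mc{M}_{K(\Sigma)}$ comes from \cite{LiBland:2012vo}, and that the Poisson comparison at $\hbar=0$ is the main result of \cite{LiBland:2014da}; this is exactly what the paper's one-line proof is pointing to.
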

\begin{proof}
	This follows by relating Theorem~\ref{thm:AlgFunct} to the results in \cite{LiBland:2014da}.
\end{proof}

As an example, we will use this result to describe the deformation quantization of a Poisson algebraic group $H$. Our procedure is a slight variant of the one first presented in \cite{Severa:2014te} to quantize Lie bialgebras. For now, we make the simplifying assumption that we may identify $G$ with the Drinfel'd double of $H$ (we will treat the general case in the next section). More precisely, we suppose that $H,H^*\subset G$ are closed subgroups where $H^*$ is a dual Poisson algebraic group to $H$, that the product map \begin{equation}\label{eq:invProd}H^*\times H\xrightarrow{h,h'\mapsto h\cdot h'}G\end{equation} is invertible, so that $\g=\h\oplus\h^*$ as a vector space (here $\h$ and $\h^*$ denote the Lie algebras corresponding to $H$ and $H^*$), and that $t\in S^2(\g)\cong S^2(\h\oplus \h^*)$ corresponds to the canonical symmetric pairing between $\h$ and $\h^*$.

Consider the skeletized coloured surface depicted in Figure~\ref{fig:PoissonLie}.
\begin{figure}[h!]
\begin{center}
\begin{subfigure}{.45\textwidth}
\centering
	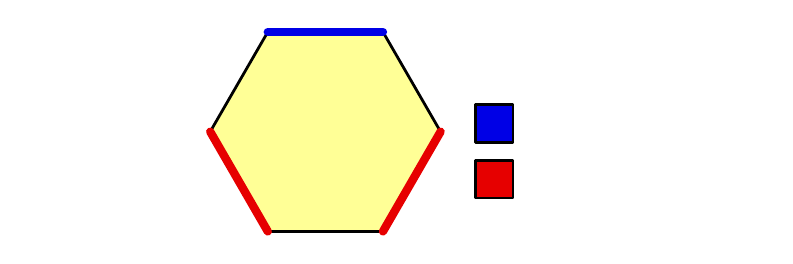
	\caption{\label{fig:PoissonLie}The (geometric) coloured surface for a Poisson Lie group.}
\end{subfigure}
\begin{subfigure}{.45\textwidth}
\centering
	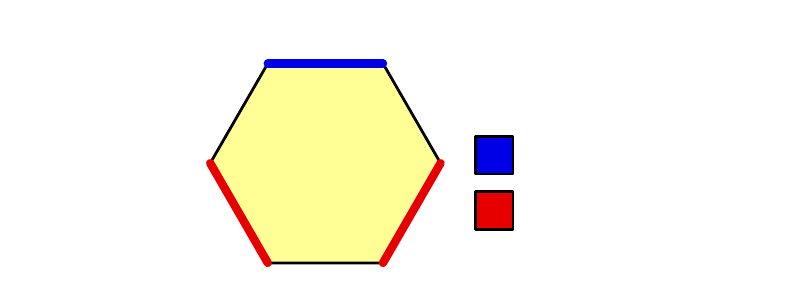
	\caption{\label{fig:PoissonLieSkel}A good choice of skeleton.}
\end{subfigure}
\caption{}
\end{center}
	
\end{figure}
By choosing a skeleton as in Figure~\ref{fig:PoissonLieSkel}
we may use \eqref{eq:AlgFormula} to compute the corresponding algebra:
$$\mc{O}\big((G\times G)/(H^*\times H\times H^*\big)[\![\hbar]\!],$$
where the action is $$(h^*_1,h,h^*_2)\cdot(g_1,g_2)=\big(hg_1(h_1^*)^{-1},hg_2(h_2^*)^{-1}\big),\quad(h^*_1,h,h^*_2)\in H^*\times H\times H^*,\quad(g_1,g_2)\in G\times G.$$
In particular, using the decomposition \eqref{eq:invProd}, we may identify the algebra with $\mc{O}(H)[\![\hbar]\!]$ via the map $$H\xrightarrow{h\mapsto (1,h)}(G\times G)/(H^*\times H\times H^*).$$
The multiplication \eqref{eq:multGamma} is a deformation quantization of the Poisson structure on $H$ \cite{LiBland:2014da}.

We now want to describe a compatible comultiplication on $\mc{O}(H)[\![\hbar]\!]$ corresponding to the group product $H\times H\to H$. For this, consider the following sequence of embeddings:

\begin{equation}\label{eq:MultEmb}
	
\end{equation}
The colours are as before, and we choose the following compatible skeleton on $\Sigma^{(2)}$:
\begin{center}
\begingroup%
  \makeatletter%
  \providecommand\color[2][]{%
    \errmessage{(Inkscape) Color is used for the text in Inkscape, but the package 'color.sty' is not loaded}%
    \renewcommand\color[2][]{}%
  }%
  \providecommand\transparent[1]{%
    \errmessage{(Inkscape) Transparency is used (non-zero) for the text in Inkscape, but the package 'transparent.sty' is not loaded}%
    \renewcommand\transparent[1]{}%
  }%
  \providecommand\rotatebox[2]{#2}%
  \ifx\svgwidth\undefined%
    \setlength{\unitlength}{39.19645386bp}%
    \ifx\svgscale\undefined%
      \relax%
    \else%
      \setlength{\unitlength}{\unitlength * \real{\svgscale}}%
    \fi%
  \else%
    \setlength{\unitlength}{\svgwidth}%
  \fi%
  \global\let\svgwidth\undefined%
  \global\let\svgscale\undefined%
  \makeatother%
  \begin{picture}(1,1.00264102)%
    \put(0,0){\includegraphics[width=\unitlength,page=1]{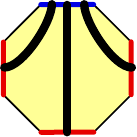}}%
  \end{picture}%
\endgroup%
	
\end{center}

Once again, using \eqref{eq:AlgFormula}, we may identify the algebra corresponding to $\Sigma^{(2)}$ with

$$\mc{O}\big((G\times G\times G)/(H\times H^*\times H^*\times H^*)\big)[\![\hbar]\!]\cong \mc{O}\big((H\times H\times H)/H\big)[\![\hbar]\!]\cong\mc{O}(H\times H)[\![\hbar]\!],$$ (here $H$ acts diagonally on $H\times H\times H$).  Thus, applying the functor $\mc{O}(\mc{M}_{K(-)}^\hbar)$ to the sequence of embeddings \eqref{eq:MultEmb} yields a sequence of 
 algebra morphisms:
\begin{equation}\label{eq:PrecoMult}\mc{O}(H)[\![\hbar]\!]\hat\otimes\mc{O}(H)[\![\hbar]\!]\xrightarrow{\cong}\mc{O}(H\times H)[\![\hbar]\!]\leftarrow \mc{O}(H)[\![\hbar]\!].\end{equation}

\begin{Theorem}\label{thm:HopfQuant1}
The left hand morphism in \eqref{eq:PrecoMult} is invertible, and the corresponding composite \begin{equation}\label{eq:coMult1}\Delta:\mc{O}(H)[\![\hbar]\!]\to \mc{O}(H)[\![\hbar]\!]\hat\otimes\mc{O}(H)[\![\hbar]\!]\end{equation}
is the comultiplication for a Hopf-algebra structure on $\mc{O}(H)[\![\hbar]\!]$ quantizing the Poisson algebraic group $H$.
\end{Theorem}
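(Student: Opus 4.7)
The plan is to argue in three stages: invertibility of the left map in~\eqref{eq:PrecoMult}; identification of $\Delta$ as an algebra morphism quantizing the classical coproduct; and verification of the Hopf axioms by functoriality applied to carefully chosen embedding sequences.

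\emph{Stage 1 (invertibility).} By Theorem~\ref{thm:AlgFunct}, the functor $\Gamma\mapsto\mc{M}_\Gamma^\hbar$ is monoidal, giving $\mc{O}(\mc{M}^\hbar_{K(\Sigma\sqcup\Sigma)})\cong\mc{O}(H)[\![\hbar]\!]\hat\otimes\mc{O}(H)[\![\hbar]\!]$. The left arrow in~\eqref{eq:PrecoMult} is induced by the embedding $\Sigma\sqcup\Sigma\hookrightarrow\Sigma^{(2)}$, and reducing modulo $\hbar$ and using the commutative square of Theorem~\ref{thm:CohQuant} identifies this with the classical restriction $\mc{O}(\mc{M}_{\Sigma^{(2)}})\to\mc{O}(\mc{M}_\Sigma\times\mc{M}_\Sigma)$, which is an isomorphism. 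Since both sides are topologically free $\K[\![\hbar]\!]$-modules, the mod-$\hbar$ isomorphism lifts by the usual $\hbar$-adic completeness argument to an honest invertible map.

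\emph{Stage 2 ($\Delta$ is a quantization of the classical coproduct).} The composite $\Delta$ defined by~\eqref{eq:coMult1} is an algebra morphism because it is a composition of algebra morphisms (both factors are images of morphisms in $\sCol$ under the quantization functor). Reducing modulo $\hbar$ and invoking Theorem~\ref{thm:CohQuant} once more, $\Delta|_{\hbar=0}$ is identified with the classical algebra morphism $\mc{O}(H)\to\mc{O}(H\times H)$ induced by the group product $H\times H\to H$. Combined with the already-known fact (recalled from \cite{LiBland:2014da}) that the multiplication on $\mc{O}(\mc{M}_\Sigma^\hbar)$ quantizes the Poisson structure on $H$, this shows $\Delta$ deforms the classical coproduct of the commutative Hopf algebra $\mc{O}(H)$.

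\emph{Stage 3 (Hopf axioms).} Each Hopf axiom is realized topologically. For coassociativity, both composites $(\Delta\otimes\id)\circ\Delta$ and $(\id\otimes\Delta)\circ\Delta$ are obtained by applying the quantization functor to two sequences of embeddings with source $\Sigma$ and target $\Sigma\sqcup\Sigma\sqcup\Sigma$; by choosing compatible skeleta I arrange both sequences to factor through a common intermediate surface $\Sigma^{(3)}$ modelling the triple-product cobordism $H^3\to H$, so that Theorem~\ref{thm:CohQuant} forces the coassociativity diagram to commute. The counit $\epsilon$ and antipode $S$ arise analogously, from embeddings realizing respectively the inclusion of the group identity (a disc with the appropriate boundary colouring) and the inversion $h\mapsto h^{-1}$ (using the order-reversal functor on $\FSet^<$, which swaps the roles of the $H$- and $H^*$-coloured walls). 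The Hopf compatibility axioms — counit equations and the antipode identity — reduce in the same manner to commutative diagrams of embeddings of skeletized coloured surfaces, and are obtained by applying Theorem~\ref{thm:CohQuant}.

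\emph{Main obstacle.} The principal difficulty is the organisational one in Stage~3: each Hopf axiom must be written as a specific commutative diagram of embeddings of skeletized coloured surfaces whose skeleta are chosen compatibly enough for Theorem~\ref{thm:CohQuant} to apply. Coassociativity is the most delicate case, as it requires slicing the cobordism $H^3\to H$ by a Morse function in two genuinely different ways and verifying that the resulting sequences of embeddings represent the same morphism in $\sCol^{geom}$ up to the implicit 2-morphisms in $\sCol$; this demands careful bookkeeping of the colourings and skeleta along each slicing, but no new conceptual input beyond the functoriality already established.
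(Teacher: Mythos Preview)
Your Stages 1 and 2, and the coassociativity portion of Stage 3, match the paper's proof: invertibility by flatness plus the $\hbar=0$ identification, and coassociativity by factoring both iterated coproducts through a common $\Sigma^{(3)}$ diagram. The paper also obtains the counit from the embedding $\Sigma\cong\Sigma^{(1)}\to\Sigma^{(0)}$, as you suggest.

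The genuine gap is your treatment of the antipode. The order-reversal endofunctor on $\FSet^<$ corresponds, on the geometric side, to reversing orientation; but morphisms in $\sCol^{geom}$ are \emph{orientation-preserving} embeddings, so this does not produce a morphism in the category to which Theorem~\ref{thm:CohQuant} applies. Moreover, even granting some version of it, swapping the $H$- and $H^*$-coloured walls lands you in a \emph{different} coloured surface whose moduli space is $H^*$, not $H$; so you would get a map $\mc{O}(H^*)[\![\hbar]\!]\to\mc{O}(H)[\![\hbar]\!]$ (or the reverse), not an anti-endomorphism of $\mc{O}(H)[\![\hbar]\!]$. There is no embedding of skeletized coloured surfaces that models $h\mapsto h^{-1}$ in the framework of the paper.

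The paper sidesteps this entirely. Having established that $(\mc{O}(H)[\![\hbar]\!],m^\hbar,\Delta,\epsilon)$ is a bialgebra over $\K[\![\hbar]\!]$ which at $\hbar=0$ is the standard commutative Hopf algebra $\mc{O}(H)$, it invokes the standard fact that a topologically free $\K[\![\hbar]\!]$-bialgebra whose reduction modulo $\hbar$ is a Hopf algebra is itself a Hopf algebra: the antipode is recovered order by order in $\hbar$ as the convolution inverse of the identity, using that the classical antipode gives a solution modulo $\hbar$. So you should drop the geometric construction of $S$ and instead deduce its existence from the bialgebra structure together with the $\hbar=0$ limit.
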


\begin{proof}

At $\hbar=0$, the left hand morphism in \eqref{eq:PrecoMult} is the canonical identification, and hence invertible. Since all the algebras in \eqref{eq:PrecoMult} are flat $\mbb{K}[\![\hbar]\!]$ algebras, it follows that the left hand morphism in \eqref{eq:PrecoMult} remains invertible away from $\hbar=0$.


 Now coassociativity is the equation
 \begin{subequations}
 \begin{equation}\label{eq:CoAss0}(\id\hat\otimes\Delta)\circ\Delta=(\Delta\hat\otimes\id)\circ\Delta.\end{equation} We claim that both sides of this equations are equal to the morphism  \begin{equation}\label{eq:coAss1}\Delta^{(3)}:\mc{O}(H)[\![\hbar]\!]\to \mc{O}(H)[\![\hbar]\!]\hat\otimes\mc{O}(H)[\![\hbar]\!]\hat\otimes\mc{O}(H)[\![\hbar]\!]\end{equation}
 obtained by applying the functor $\mc{O}(\mc{M}_{K(-)}^\hbar)$ to the diagram
  \begin{equation}\label{eq:CoAss2}
\begingroup%
  \makeatletter%
  \providecommand\color[2][]{%
    \errmessage{(Inkscape) Color is used for the text in Inkscape, but the package 'color.sty' is not loaded}%
    \renewcommand\color[2][]{}%
  }%
  \providecommand\transparent[1]{%
    \errmessage{(Inkscape) Transparency is used (non-zero) for the text in Inkscape, but the package 'transparent.sty' is not loaded}%
    \renewcommand\transparent[1]{}%
  }%
  \providecommand\rotatebox[2]{#2}%
  \ifx\svgwidth\undefined%
    \setlength{\unitlength}{294.09228227bp}%
    \ifx\svgscale\undefined%
      \relax%
    \else%
      \setlength{\unitlength}{\unitlength * \real{\svgscale}}%
    \fi%
  \else%
    \setlength{\unitlength}{\svgwidth}%
  \fi%
  \global\let\svgwidth\undefined%
  \global\let\svgscale\undefined%
  \makeatother%
  \begin{picture}(1,0.70616396)%
    \put(0,0){\includegraphics[width=\unitlength,page=1]{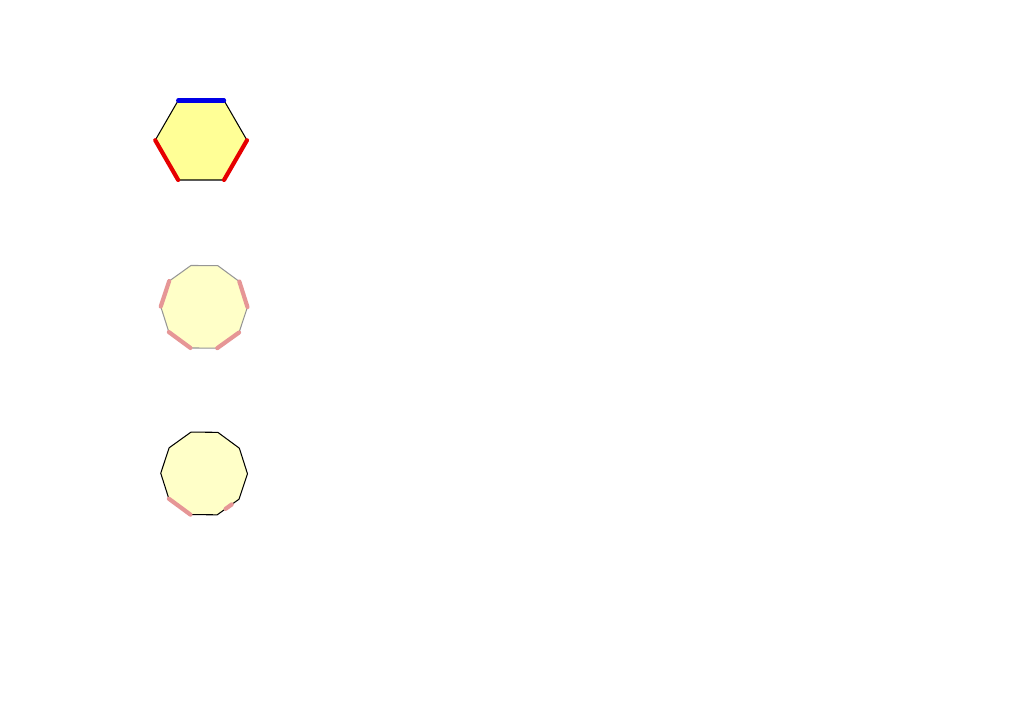}}%
    \put(0.65992826,-0.06026356){\color[rgb]{0,0,0}\makebox(0,0)[rb]{\smash{}}}%
    \put(0,0){\includegraphics[width=\unitlength,page=2]{coAssSh.pdf}}%
    \put(0.16267926,0.52719475){\color[rgb]{0,0,0}\makebox(0,0)[rb]{\smash{$0$}}}%
    \put(0.15723879,0.41294489){\color[rgb]{0,0,0}\makebox(0,0)[rb]{\smash{$0$}}}%
    \put(0.23340538,0.52719475){\color[rgb]{0,0,0}\makebox(0,0)[lb]{\smash{$3$}}}%
    \put(0.24428632,0.41294489){\color[rgb]{0,0,0}\makebox(0,0)[lb]{\smash{$3$}}}%
    \put(0.24428632,0.24973082){\color[rgb]{0,0,0}\makebox(0,0)[lb]{\smash{$3$}}}%
    \put(0.15723879,0.24973082){\color[rgb]{0,0,0}\makebox(0,0)[rb]{\smash{$0$}}}%
    \put(0.17356022,0.34221879){\color[rgb]{0,0,0}\makebox(0,0)[rb]{\smash{$1$}}}%
    \put(0.22796491,0.34221879){\color[rgb]{0,0,0}\makebox(0,0)[lb]{\smash{$2$}}}%
    \put(0.22796491,0.17900472){\color[rgb]{0,0,0}\makebox(0,0)[lb]{\smash{$2$}}}%
    \put(0.17356022,0.17900472){\color[rgb]{0,0,0}\makebox(0,0)[rb]{\smash{$1$}}}%
    \put(0,0){\includegraphics[width=\unitlength,page=3]{coAssSh.pdf}}%
    \put(0.53807166,0.20076659){\color[rgb]{0,0,0}\makebox(0,0)[rb]{\smash{$0$}}}%
    \put(0.60879776,0.20076659){\color[rgb]{0,0,0}\makebox(0,0)[lb]{\smash{$1$}}}%
    \put(0.66320245,0.20076659){\color[rgb]{0,0,0}\makebox(0,0)[rb]{\smash{$1$}}}%
    \put(0.73392855,0.20076659){\color[rgb]{0,0,0}\makebox(0,0)[lb]{\smash{$2$}}}%
    \put(0.78289269,0.20076659){\color[rgb]{0,0,0}\makebox(0,0)[rb]{\smash{$2$}}}%
    \put(0.85361879,0.20076659){\color[rgb]{0,0,0}\makebox(0,0)[lb]{\smash{$3$}}}%
  \end{picture}%
\endgroup%
 
 \end{equation} 
 \end{subequations}
 and composing from top left to bottom right (after applying the functor $\mc{O}(\mc{M}_{K(-)}^\hbar)$, the horizontal edge become invertible). In more detail, the skeleta are as in\footnote{These fit together to define a cosimplicial object \begin{equation}\label{eq:coSimpl}\mbf{\Delta}\xrightarrow{[n]\mapsto \Sigma^{(n)}} \sCol^{geom}\xrightarrow{\mc{O}(\mc{M}_{K(-)}^\hbar)}\mbb{K}\Alg_\hbar,\end{equation}
 (here $\mbf{\Delta}$ is the simplex category, or equivalently, the category of linearly ordered finite sets).}:
 \begin{center}
\begingroup%
  \makeatletter%
  \providecommand\color[2][]{%
    \errmessage{(Inkscape) Color is used for the text in Inkscape, but the package 'color.sty' is not loaded}%
    \renewcommand\color[2][]{}%
  }%
  \providecommand\transparent[1]{%
    \errmessage{(Inkscape) Transparency is used (non-zero) for the text in Inkscape, but the package 'transparent.sty' is not loaded}%
    \renewcommand\transparent[1]{}%
  }%
  \providecommand\rotatebox[2]{#2}%
  \ifx\svgwidth\undefined%
    \setlength{\unitlength}{426.61171875bp}%
    \ifx\svgscale\undefined%
      \relax%
    \else%
      \setlength{\unitlength}{\unitlength * \real{\svgscale}}%
    \fi%
  \else%
    \setlength{\unitlength}{\svgwidth}%
  \fi%
  \global\let\svgwidth\undefined%
  \global\let\svgscale\undefined%
  \makeatother%
  \begin{picture}(1,0.17381617)%
    \put(0,0){\includegraphics[width=\unitlength,page=1]{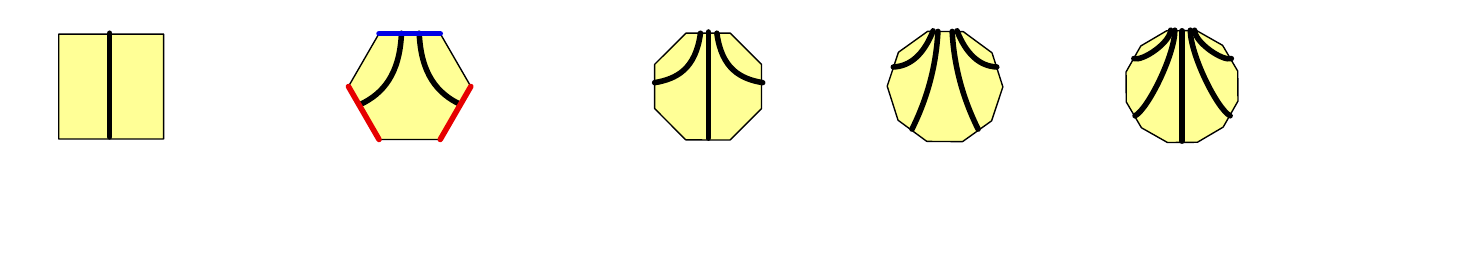}}%
    \put(0.27647103,0.0226256){\color[rgb]{0,0,0}\makebox(0,0)[b]{\smash{$\Sigma=\Sigma^{(1)}$}}}%
    \put(0.47794153,0.0226256){\color[rgb]{0,0,0}\makebox(0,0)[b]{\smash{$\Sigma^{(2)}$}}}%
    \put(0,0){\includegraphics[width=\unitlength,page=2]{CoSimpl.pdf}}%
    \put(0.79788848,0.0226256){\color[rgb]{0,0,0}\makebox(0,0)[b]{\smash{$\Sigma^{(4)}$}}}%
    \put(0,0){\includegraphics[width=\unitlength,page=3]{CoSimpl.pdf}}%
    \put(0.0750005,0.0226256){\color[rgb]{0,0,0}\makebox(0,0)[b]{\smash{$\Sigma^{(0)}$}}}%
    \put(0,0){\includegraphics[width=\unitlength,page=4]{CoSimpl.pdf}}%
    \put(0.637915,0.0226256){\color[rgb]{0,0,0}\makebox(0,0)[b]{\smash{$\Sigma^{(3)}$}}}%
    \put(0.64958256,-0.01275831){\color[rgb]{0,0,0}\makebox(0,0)[rb]{\smash{}}}%
    \put(0.94136202,0.09763526){\color[rgb]{0,0,0}\makebox(0,0)[b]{\smash{$\cdots$}}}%
  \end{picture}%
\endgroup%

 \end{center}
 and we have numbered the $H^*$-coloured (red) domain walls to emphasize where they are mapped to. 

Indeed the diagram \eqref{eq:CoAss2} fits into the following commutative diagram:
  \begin{center}
\begingroup%
  \makeatletter%
  \providecommand\color[2][]{%
    \errmessage{(Inkscape) Color is used for the text in Inkscape, but the package 'color.sty' is not loaded}%
    \renewcommand\color[2][]{}%
  }%
  \providecommand\transparent[1]{%
    \errmessage{(Inkscape) Transparency is used (non-zero) for the text in Inkscape, but the package 'transparent.sty' is not loaded}%
    \renewcommand\transparent[1]{}%
  }%
  \providecommand\rotatebox[2]{#2}%
  \ifx\svgwidth\undefined%
    \setlength{\unitlength}{412.68603516bp}%
    \ifx\svgscale\undefined%
      \relax%
    \else%
      \setlength{\unitlength}{\unitlength * \real{\svgscale}}%
    \fi%
  \else%
    \setlength{\unitlength}{\svgwidth}%
  \fi%
  \global\let\svgwidth\undefined%
  \global\let\svgscale\undefined%
  \makeatother%
  \begin{picture}(1,0.78657601)%
    \put(0,0){\includegraphics[width=\unitlength,page=1]{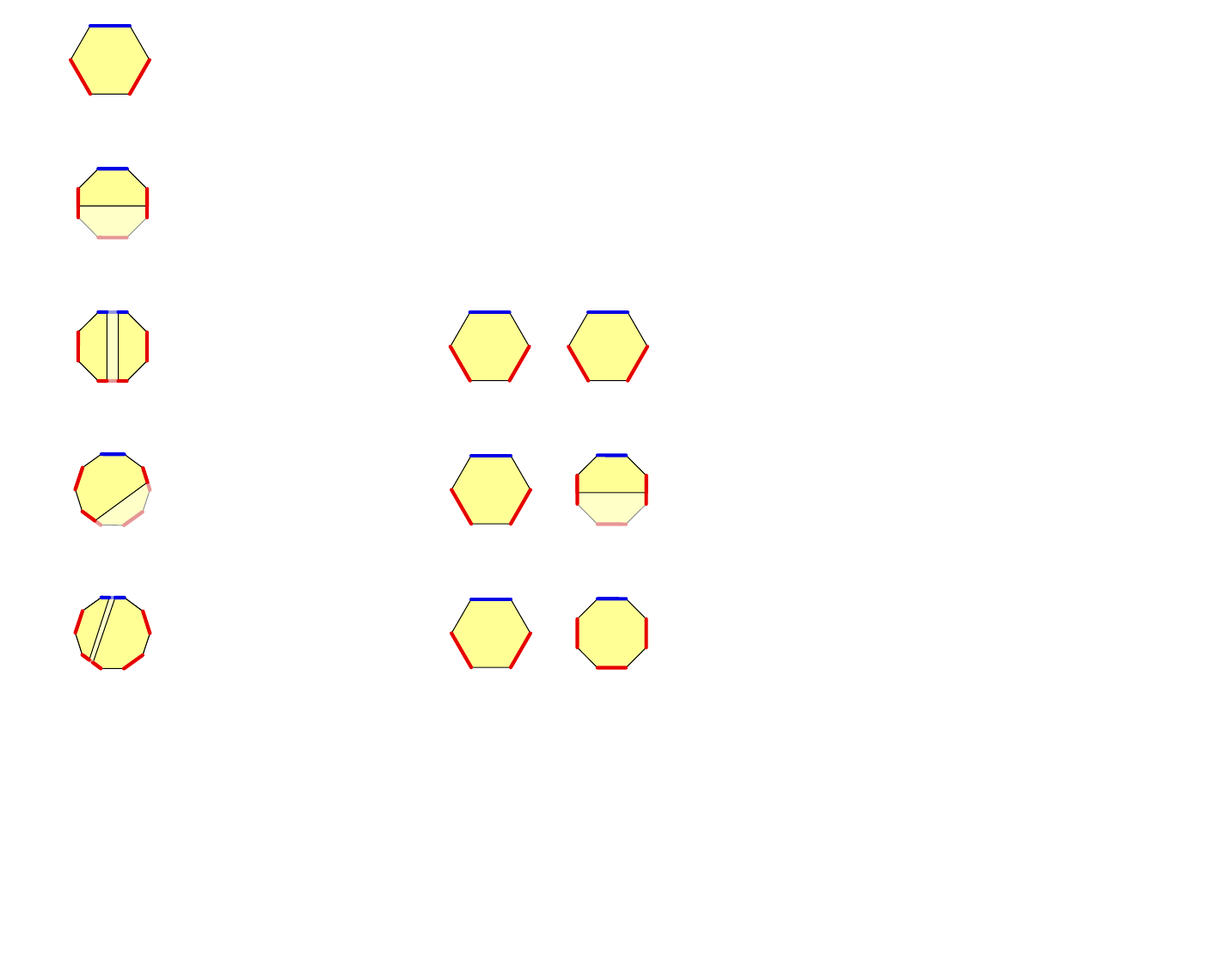}}%
    \put(0.41925336,0.05707722){\color[rgb]{0,0,0}\makebox(0,0)[rb]{\smash{}}}%
    \put(0,0){\includegraphics[width=\unitlength,page=2]{coAss.pdf}}%
    \put(0.06489901,0.70833976){\color[rgb]{0,0,0}\makebox(0,0)[rb]{\smash{$0$}}}%
    \put(0.06102197,0.61141377){\color[rgb]{0,0,0}\makebox(0,0)[rb]{\smash{$0$}}}%
    \put(0.06102197,0.49897963){\color[rgb]{0,0,0}\makebox(0,0)[rb]{\smash{$0$}}}%
    \put(0.06102197,0.39429957){\color[rgb]{0,0,0}\makebox(0,0)[rb]{\smash{$0$}}}%
    \put(0.37118514,0.47571739){\color[rgb]{0,0,0}\makebox(0,0)[rb]{\smash{$0$}}}%
    \put(0.37118514,0.35940621){\color[rgb]{0,0,0}\makebox(0,0)[rb]{\smash{$0$}}}%
    \put(0.37118514,0.24309503){\color[rgb]{0,0,0}\makebox(0,0)[rb]{\smash{$0$}}}%
    \put(0.11530054,0.70833976){\color[rgb]{0,0,0}\makebox(0,0)[lb]{\smash{$3$}}}%
    \put(0.12305462,0.61141377){\color[rgb]{0,0,0}\makebox(0,0)[lb]{\smash{$3$}}}%
    \put(0.12305462,0.49897963){\color[rgb]{0,0,0}\makebox(0,0)[lb]{\smash{$3$}}}%
    \put(0.12305462,0.39429957){\color[rgb]{0,0,0}\makebox(0,0)[lb]{\smash{$3$}}}%
    \put(0.12305462,0.27798839){\color[rgb]{0,0,0}\makebox(0,0)[lb]{\smash{$3$}}}%
    \put(0.06102197,0.27798839){\color[rgb]{0,0,0}\makebox(0,0)[rb]{\smash{$0$}}}%
    \put(0.53014375,0.38266845){\color[rgb]{0,0,0}\makebox(0,0)[lb]{\smash{$3$}}}%
    \put(0.52238967,0.47571739){\color[rgb]{0,0,0}\makebox(0,0)[lb]{\smash{$4$}}}%
    \put(0.42546369,0.47571739){\color[rgb]{0,0,0}\makebox(0,0)[lb]{\smash{$1$}}}%
    \put(0.46811112,0.47571739){\color[rgb]{0,0,0}\makebox(0,0)[rb]{\smash{$1$}}}%
    \put(0.42546369,0.35940621){\color[rgb]{0,0,0}\makebox(0,0)[lb]{\smash{$1$}}}%
    \put(0.0920383,0.56876634){\color[rgb]{0,0,0}\makebox(0,0)[b]{\smash{$1$}}}%
    \put(0.0920383,0.45245516){\color[rgb]{0,0,0}\makebox(0,0)[b]{\smash{$1$}}}%
    \put(0.07265311,0.34389806){\color[rgb]{0,0,0}\makebox(0,0)[rb]{\smash{$1$}}}%
    \put(0.1114235,0.34389806){\color[rgb]{0,0,0}\makebox(0,0)[lb]{\smash{$2$}}}%
    \put(0.1114235,0.22758688){\color[rgb]{0,0,0}\makebox(0,0)[lb]{\smash{$2$}}}%
    \put(0.07265311,0.22758688){\color[rgb]{0,0,0}\makebox(0,0)[rb]{\smash{$1$}}}%
    \put(0.49525039,0.33614398){\color[rgb]{0,0,0}\makebox(0,0)[b]{\smash{$2$}}}%
    \put(0.46811112,0.38266845){\color[rgb]{0,0,0}\makebox(0,0)[rb]{\smash{$1$}}}%
    \put(0.53014375,0.26635727){\color[rgb]{0,0,0}\makebox(0,0)[lb]{\smash{$3$}}}%
    \put(0.42546369,0.24309503){\color[rgb]{0,0,0}\makebox(0,0)[lb]{\smash{$1$}}}%
    \put(0.49525039,0.2198328){\color[rgb]{0,0,0}\makebox(0,0)[b]{\smash{$2$}}}%
    \put(0.46811112,0.26635727){\color[rgb]{0,0,0}\makebox(0,0)[rb]{\smash{$1$}}}%
    \put(0,0){\includegraphics[width=\unitlength,page=3]{coAss.pdf}}%
    \put(0.37118514,0.12678385){\color[rgb]{0,0,0}\makebox(0,0)[rb]{\smash{$0$}}}%
    \put(0.53014375,0.15004609){\color[rgb]{0,0,0}\makebox(0,0)[lb]{\smash{$3$}}}%
    \put(0.42546369,0.12678385){\color[rgb]{0,0,0}\makebox(0,0)[lb]{\smash{$1$}}}%
    \put(0.49525039,0.10352162){\color[rgb]{0,0,0}\makebox(0,0)[b]{\smash{$2$}}}%
    \put(0.46811112,0.15004609){\color[rgb]{0,0,0}\makebox(0,0)[rb]{\smash{$1$}}}%
    \put(0,0){\includegraphics[width=\unitlength,page=4]{coAss.pdf}}%
    \put(0.72011868,0.12678385){\color[rgb]{0,0,0}\makebox(0,0)[rb]{\smash{$0$}}}%
    \put(0.77052019,0.12678385){\color[rgb]{0,0,0}\makebox(0,0)[lb]{\smash{$1$}}}%
    \put(0.80929058,0.12678385){\color[rgb]{0,0,0}\makebox(0,0)[rb]{\smash{$1$}}}%
    \put(0.85969209,0.12678385){\color[rgb]{0,0,0}\makebox(0,0)[lb]{\smash{$2$}}}%
    \put(0.89458539,0.12678385){\color[rgb]{0,0,0}\makebox(0,0)[rb]{\smash{$2$}}}%
    \put(0.9449869,0.12678385){\color[rgb]{0,0,0}\makebox(0,0)[lb]{\smash{$3$}}}%
  \end{picture}%
\endgroup%
 
 \end{center}
 After applying the functor $\mc{O}(\mc{M}_{K(-)}^\hbar)$, the horizontal edges again become invertible, 
 and composing the diagram from top-left to bottom right along the top/right-most path yields the left hand side of \eqref{eq:CoAss0}. On the other hand composing the diagram from top-left to bottom right along the bottom/left-most path yields the right hand side of \eqref{eq:CoAss0}, which shows that it too is equal to \eqref{eq:coAss1}.
 
 The counit is obtained by applying the functor $\mc{O}(\mc{M}_{K(-)}^\hbar)$ to the unique embedding $\Sigma\cong \Sigma^{(1)}\to \Sigma^{(0)}$.
 
 Now, at $\hbar=0$, \eqref{eq:coMult1} 
  is just the comultiplication dual to the group product $H\times H\to H$, so the bialgebra structure on $\mc{O}(H)$ is the standard one; in particular it is a Hopf-algebra. It follows that $\mc{O}(H)[\![\hbar]\!]$ remains a Hopf-algebra away from $\hbar=0$ which deformation quantizes the Poisson algebraic group $H$.
  
%
%
\end{proof}
\begin{remark}
	We should mention that the sequence of embeddings \eqref{eq:MultEmb} corresponds to vertical-time-slices of the following cobordism
	\begin{center}
\begingroup%
  \makeatletter%
  \providecommand\color[2][]{%
    \errmessage{(Inkscape) Color is used for the text in Inkscape, but the package 'color.sty' is not loaded}%
    \renewcommand\color[2][]{}%
  }%
  \providecommand\transparent[1]{%
    \errmessage{(Inkscape) Transparency is used (non-zero) for the text in Inkscape, but the package 'transparent.sty' is not loaded}%
    \renewcommand\transparent[1]{}%
  }%
  \providecommand\rotatebox[2]{#2}%
  \ifx\svgwidth\undefined%
    \setlength{\unitlength}{157.71936035bp}%
    \ifx\svgscale\undefined%
      \relax%
    \else%
      \setlength{\unitlength}{\unitlength * \real{\svgscale}}%
    \fi%
  \else%
    \setlength{\unitlength}{\svgwidth}%
  \fi%
  \global\let\svgwidth\undefined%
  \global\let\svgscale\undefined%
  \makeatother%
  \begin{picture}(1,1.01868871)%
    \put(0,0){\includegraphics[width=\unitlength,page=1]{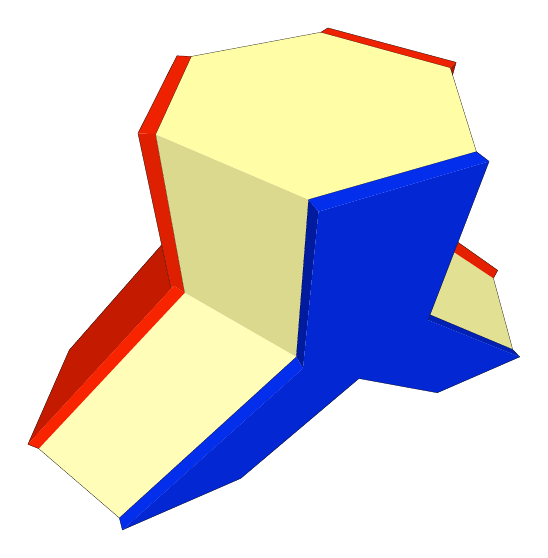}}%
  \end{picture}%
\endgroup%

	\end{center}
	In particular, there should be a deeper TQFT proof of Theorem~\ref{thm:HopfQuant1}. In future work we hope to define this TQFT rigorously using factorization homology.
\end{remark}

As a second example, we will use this result to describe an equivariant deformation quantization procedure for Lu-Yakimov Poisson Homogeneous spaces \cite{Lu06}. In more detail, suppose that $H,H^*\subset G$ are as before and $C\subset G$ is a closed subgroup whose Lie algebra $\mf{c}\subset \g$ is coisotropic. Then Lu and Yakimov described a natural Poisson structure on $G/C$ for which the action map $H\times G/C\to G/C$ is Poisson. 

Consider the skeletized coloured surface 
\begin{center}
\begingroup%
  \makeatletter%
  \providecommand\color[2][]{%
    \errmessage{(Inkscape) Color is used for the text in Inkscape, but the package 'color.sty' is not loaded}%
    \renewcommand\color[2][]{}%
  }%
  \providecommand\transparent[1]{%
    \errmessage{(Inkscape) Transparency is used (non-zero) for the text in Inkscape, but the package 'transparent.sty' is not loaded}%
    \renewcommand\transparent[1]{}%
  }%
  \providecommand\rotatebox[2]{#2}%
  \ifx\svgwidth\undefined%
    \setlength{\unitlength}{199.53677979bp}%
    \ifx\svgscale\undefined%
      \relax%
    \else%
      \setlength{\unitlength}{\unitlength * \real{\svgscale}}%
    \fi%
  \else%
    \setlength{\unitlength}{\svgwidth}%
  \fi%
  \global\let\svgwidth\undefined%
  \global\let\svgscale\undefined%
  \makeatother%
  \begin{picture}(1,0.44330154)%
    \put(0,0){\includegraphics[width=\unitlength,page=1]{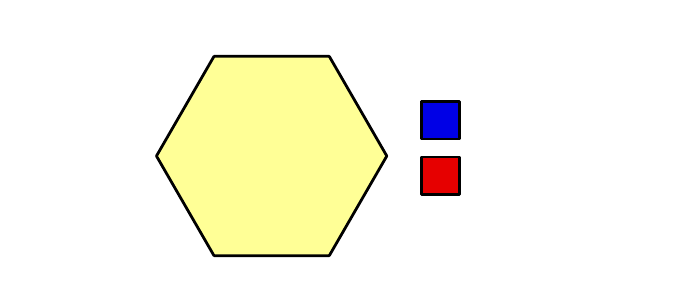}}%
    \put(0.69419407,0.25246893){\color[rgb]{0,0,0}\makebox(0,0)[lb]{\smash{$H\subset G$}}}%
    \put(0.69419407,0.17228321){\color[rgb]{0,0,0}\makebox(0,0)[lb]{\smash{$H^*\subset G$}}}%
    \put(0.61400835,0.33265465){\color[rgb]{0,0,0}\makebox(0,0)[lb]{\smash{Domain walls}}}%
    \put(0.18781782,0.20458195){\color[rgb]{0,0,0}\makebox(0,0)[rb]{\smash{$\Sigma_C=$}}}%
    \put(0,0){\includegraphics[width=\unitlength,page=2]{PoissonHomo.pdf}}%
    \put(0.69419407,0.09209749){\color[rgb]{0,0,0}\makebox(0,0)[lb]{\smash{$C\subset G$}}}%
    \put(0,0){\includegraphics[width=\unitlength,page=3]{PoissonHomo.pdf}}%
  \end{picture}%
\endgroup%

\end{center}
The corresponding moduli space is $\mc{M}_{K(\Sigma_C)}=G/C$ with the Lu-Yakimov Poisson structure \cite{LiBland:2012vo}, and Theorem~\ref{thm:CohQuant} implies that $G/C\xhookrightarrow{\hbar=0}\mc{M}_{K(\Sigma_C)}^\hbar$ is a deformation quantization of the Poisson scheme $G/C$ (cf. \cite{LiBland:2014da}).  
 
 Applying the functor $\mc{M}_{K(-)}^\hbar$ to following sequence of embeddings
 \begin{center}
\begingroup%
  \makeatletter%
  \providecommand\color[2][]{%
    \errmessage{(Inkscape) Color is used for the text in Inkscape, but the package 'color.sty' is not loaded}%
    \renewcommand\color[2][]{}%
  }%
  \providecommand\transparent[1]{%
    \errmessage{(Inkscape) Transparency is used (non-zero) for the text in Inkscape, but the package 'transparent.sty' is not loaded}%
    \renewcommand\transparent[1]{}%
  }%
  \providecommand\rotatebox[2]{#2}%
  \ifx\svgwidth\undefined%
    \setlength{\unitlength}{444.86015625bp}%
    \ifx\svgscale\undefined%
      \relax%
    \else%
      \setlength{\unitlength}{\unitlength * \real{\svgscale}}%
    \fi%
  \else%
    \setlength{\unitlength}{\svgwidth}%
  \fi%
  \global\let\svgwidth\undefined%
  \global\let\svgscale\undefined%
  \makeatother%
  \begin{picture}(1,0.23917552)%
    \put(0,0){\includegraphics[width=\unitlength,page=1]{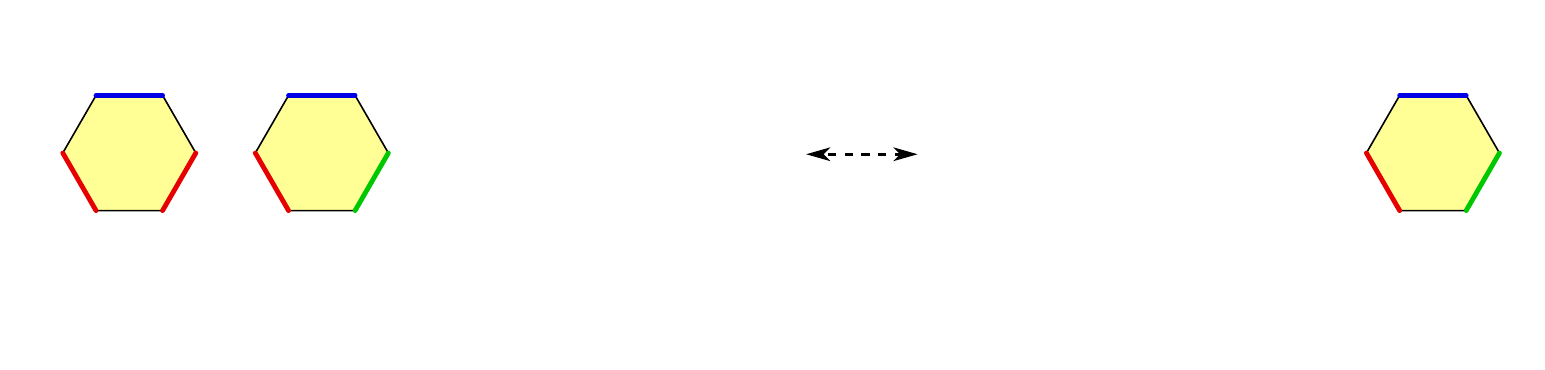}}%
    \put(0.55897301,0.14604063){\color[rgb]{0,0,0}\makebox(0,0)[b]{\smash{$=$}}}%
    \put(0.43118712,0.02169749){\color[rgb]{0,0,0}\makebox(0,0)[b]{\smash{$\Sigma_C^{(1)}$}}}%
    \put(0.68654827,0.02169749){\color[rgb]{0,0,0}\makebox(0,0)[b]{\smash{$\Sigma_C^{(1)}$}}}%
    \put(0.92752287,0.02169749){\color[rgb]{0,0,0}\makebox(0,0)[b]{\smash{$\Sigma_C$}}}%
    \put(0.14705288,0.02169749){\color[rgb]{0,0,0}\makebox(0,0)[b]{\smash{$\Sigma\sqcup\Sigma_C$}}}%
    \put(0,0){\includegraphics[width=\unitlength,page=2]{PoissonLieAct.pdf}}%
  \end{picture}%
\endgroup%

 \end{center}
 yields the following sequence of morphisms of spaces equipped with sheaves of rings in $\K\Alg_\hbar$:
 \begin{equation}\label{eq:QuantAct}\mc{M}^\hbar_{K(\Sigma)}\times\mc{M}^\hbar_{K(\Sigma_C)}\xleftarrow{\cong}\mc{M}^\hbar_{K(\Sigma_C^{(1)})}\rightarrow\mc{M}^\hbar_{K(\Sigma_C)},\end{equation}
 which restricts at $\hbar=0$ to the morphism of schemes
 $$H\times G/C\cong (H\times H\times G/C)/H \to  G/C$$
 where the leftmost map is invertible (the $H$-action in the middle term is diagonal) and the composite is the action map. As before, it follows that the leftmost map in \eqref{eq:QuantAct} is invertible, and the composite defines 
 an action map of the monoid $\mc{M}^\hbar_{K(\Sigma)}$ on $\mc{M}^\hbar_{K(\Sigma_C)}$$$\mc{M}^\hbar_{K(\Sigma)}\times\mc{M}^\hbar_{K(\Sigma_C)}\to \mc{M}^\hbar_{K(\Sigma_C)}$$ in the category $\Top_{\K\Alg_\hbar}$. That is, we have an equivariant deformation quantization: 
 $$\begin{tikzpicture}
\node (m-1-1) at (0,0) {$\mc{M}^\hbar_{K(\Sigma)}$};
\node (m-1-2) at (2.75cm,0) {$ \mc{M}^\hbar_{K(\Sigma_C)}$};
\node (m-2-1) at (0,-4cm) {$H$};
\node (m-2-2) at (2.75cm,-4cm) {$G/C$};
	\draw[<-] (m-1-2.center) ++(-.5cm,.5cm) arc [start angle=45, end angle=315, radius=0.75cm];
	\draw[<-] (m-2-2.center) ++(-.5cm,.5cm) arc [start angle=25, end angle=335, radius=1cm];
	\draw[right hook->] (m-2-2) -- node[swap] {$\hbar=0$} (m-1-2);
	\draw[right hook->] (m-2-1) -- node {$\hbar=0$} (m-1-1);
\end{tikzpicture}$$ 
 of the Poisson homogeneous space $G/C$ for the Poisson Lie group $H$. 

\subsection{Quantization of Poisson Algebraic groups (the general case)}\label{sec:QPAG}
Suppose that $H$ is a Poisson algebraic group.\footnote{Recall: by an \emph{algebraic group} we mean an affine group scheme of finite type over $\mbb{K}$; in particular, since $\mbb{K}$ is of characteristic zero, an algebraic group over $\mbb{K}$ is smooth.} Let $\h$ denote the corresponding Lie algebra, $(\g;\h,\h^*)$ the corresponding Manin triple,\footnote{We do not assume that $(\g;\h;\h^*)$ integrate to a compatible triple of algebraic groups.} and $t:=\langle\cdot,\cdot\rangle^{-1}\in S^2(\g)^\g$ the inverse of the non-degenerate invariant symmetric quadratic form on $\g$. As proven in the appendix of \cite{LiBland:2011vq}, there is a unique action of $\g$ on $H$ which extends the dressing action $\rho:\h^*\to \mf{X}(H)$ of $\h^*\subseteq \g$ and the action of $\h\subseteq \g$ by left invariant vector fields; and the stabilizers for this $\g$-action are coisotropic. 
Moreover, it will be significant to us that $t\in S^2(\g)^\g$ is $H$ invariant. Here the action of $H$ on $\g$ arises from the Poisson structure on $H$: it extends the adjoint action on $\h\subset\g$, and for $\xi\in \h^*\subset\g$ is given by $$h\cdot\xi=\ad^*_h\xi+\iota_{\rho(\xi)}\theta^R_h,\quad h\in H,\quad \xi\in \h^*,$$
where $\theta^R$ is the right Maurer-Cartan form (see \cite[Appendix~B]{LiBland:2011vq} for details).
%
%

Let $\Gamma_H^{(n)}$ denote the ciliated graph which has one (positive) vertex $\mbf{V}_{\Gamma_H^{(n)}}=\mbf{V}^+_{\Gamma_H^{(n)}}=\{\ast\}$, no full-edges $\mbf{E}_{\Gamma_H^{(n)}}=\emptyset$, and $n+1$ (positive) half-edges $\mbf{H}_{\Gamma_H^{(n)}}=\mbf{H}^+_{\Gamma_H^{(2)}}=\{0,1,2,\dots,n\}$ with $0<1<\dots<n$. We colour $\Gamma_H^{(n)}$ as follows: $C_\ast=H$, and $X_0=\dots=X_n=H$. 
We may picture $\Gamma_H^{(n)}$ in terms of the corresponding surfaces $\Sigma_{{\Gamma_H^{(n)}}}$ as follows (cf. Remark~\ref{rem:GeomColSurf}):
\begin{center}
  \def\svgwidth{\textwidth}
\begingroup%
  \makeatletter%
  \providecommand\color[2][]{%
    \errmessage{(Inkscape) Color is used for the text in Inkscape, but the package 'color.sty' is not loaded}%
    \renewcommand\color[2][]{}%
  }%
  \providecommand\transparent[1]{%
    \errmessage{(Inkscape) Transparency is used (non-zero) for the text in Inkscape, but the package 'transparent.sty' is not loaded}%
    \renewcommand\transparent[1]{}%
  }%
  \providecommand\rotatebox[2]{#2}%
  \ifx\svgwidth\undefined%
    \setlength{\unitlength}{420.47944336bp}%
    \ifx\svgscale\undefined%
      \relax%
    \else%
      \setlength{\unitlength}{\unitlength * \real{\svgscale}}%
    \fi%
  \else%
    \setlength{\unitlength}{\svgwidth}%
  \fi%
  \global\let\svgwidth\undefined%
  \global\let\svgscale\undefined%
  \makeatother%
  \begin{picture}(1,0.32180138)%
    \put(0.30842844,0.14240418){\color[rgb]{0,0,0}\makebox(0,0)[b]{\smash{$\Sigma_{\Gamma_H^{(1)}}$}}}%
    \put(0.55267605,0.14240418){\color[rgb]{0,0,0}\makebox(0,0)[b]{\smash{$\Sigma_{\Gamma_H^{(2)}}$}}}%
    \put(0.0641808,0.14240418){\color[rgb]{0,0,0}\makebox(0,0)[b]{\smash{$\Sigma_{\Gamma_H^{(0)}}$}}}%
    \put(0.74661579,0.14240418){\color[rgb]{0,0,0}\makebox(0,0)[b]{\smash{$\Sigma_{\Gamma_H^{(3)}}$}}}%
    \put(0.76076065,0.0995074){\color[rgb]{0,0,0}\makebox(0,0)[rb]{\smash{}}}%
    \put(0,0){\includegraphics[width=\unitlength,page=1]{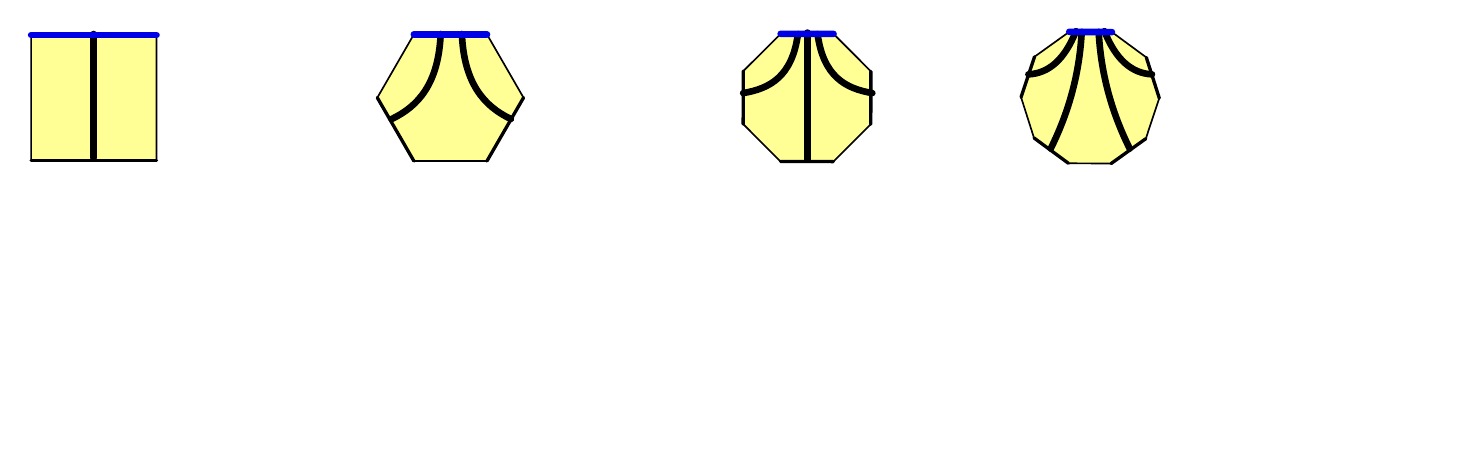}}%
    \put(0.93261993,0.23334022){\color[rgb]{0,0,0}\makebox(0,0)[b]{\smash{$\cdots$}}}%
    \put(0,0){\includegraphics[width=\unitlength,page=2]{CoSimpl2.pdf}}%
    \put(0.06350993,0.20993916){\color[rgb]{0,0,0}\makebox(0,0)[b]{\smash{$H$}}}%
    \put(0.26811602,0.22357956){\color[rgb]{0,0,0}\makebox(0,0)[b]{\smash{$H$}}}%
    \put(0.34748738,0.22357956){\color[rgb]{0,0,0}\makebox(0,0)[b]{\smash{$H$}}}%
    \put(0.50662545,0.24631357){\color[rgb]{0,0,0}\makebox(0,0)[b]{\smash{$H$}}}%
    \put(0.5975615,0.24631357){\color[rgb]{0,0,0}\makebox(0,0)[b]{\smash{$H$}}}%
    \put(0.55209347,0.20084555){\color[rgb]{0,0,0}\makebox(0,0)[b]{\smash{$H$}}}%
    \put(0.72032515,0.20993915){\color[rgb]{0,0,0}\makebox(0,0)[b]{\smash{$H$}}}%
    \put(0.77293467,0.20993915){\color[rgb]{0,0,0}\makebox(0,0)[b]{\smash{$H$}}}%
    \put(0.79319762,0.25995398){\color[rgb]{0,0,0}\makebox(0,0)[b]{\smash{$H$}}}%
    \put(0.70226158,0.25995398){\color[rgb]{0,0,0}\makebox(0,0)[b]{\smash{$H$}}}%
    \put(0,0){\includegraphics[width=\unitlength,page=3]{CoSimpl2.pdf}}%
    \put(0.47278294,0.02467858){\color[rgb]{0,0,0}\makebox(0,0)[lb]{\smash{$H$}}}%
    \put(0.43473109,0.06273038){\color[rgb]{0,0,0}\makebox(0,0)[lb]{\smash{Domain walls}}}%
  \end{picture}%
\endgroup%

\end{center}

\begin{remark}
In the case that the subalgebras $\h,\h^*\subset\g$ integrate to algebraic subgroups $H,H^*\subseteq G$ such that the group product \eqref{eq:invProd} is an isomorphism, then $H\to G\to G/H^*$ is a $\g$-equivariant isomorphism of $\K$-schemes.

In particular, $$\mc{O}(\mc{M}_{\Gamma_H^{(n)}}^\hbar)\cong \mc{O}\big((\overset{n+1}{\overbrace{H\times\cdots\times H}})/H\big)[\![\hbar]\!]\cong \mc{O}\big((G^{n+1})/(H\times (H^*)^{n+1})\big)[\![\hbar]\!].$$
More precisely, we have a canonical isomorphism $\mc{M}_{\Gamma_H^{(n)}}^\hbar\cong \mc{M}_{K(\Sigma_n)}^\hbar$, where 
the (geometric) coloured surface $\Sigma^{(n)}$ is pictured in the proof of Theorem~\ref{thm:HopfQuant1}.
\end{remark}

As in \cite{LiBland:2012vo}, we have a canonical isomorphism $\mc{M}_{\Gamma_H^{(1)}}= (H\times H)/H\cong H$ as Poisson manifolds (where $H$ is equipped with the Lie Poisson structure); and the inclusion $H\xhookrightarrow{\hbar=0}\mc{M}_{\Gamma_H^{(1)}}^\hbar$ is a deformation quantization of the Lie Poisson structure on $H$ (cf. \cite[Theorem~3]{LiBland:2014da}).
We will now show that $\mc{M}_{\Gamma_H^{(1)}}^\hbar$ is a monoid in $\Top_{\K\Alg_\hbar}$ deforming the group product on $H$. Equivalently, $\mc{O}(\mc{M}_{\Gamma_H^{(1)}}^\hbar)\cong \mc{O}(H)[\![\hbar]\!]$ has a Hopf algebra structure deforming the standard one on $\mc{O}(H)$.

Any monotone map $\tau:\{0,\dots,n\}\to \{0,\dots, m\}$ between the linearly ordered sets $0<\dots<n$ and $0<\dots<m$, defines a morphism of skeletized coloured surfaces $\tau_!:\Gamma^{(n)}_H\to \Gamma^{(m)}_H$ which sends the half edge $i\in \{0,\dots,n\}=\mbf{H}_{\Gamma^{(n)}_H}$ to the half edge $\tau(i)\in \{0,\dots,m\}=\mbf{H}_{\Gamma^{(m)}_H}$. This defines morphisms of the corresponding (non-commutative) ringed spaces:
$$\tau^*:\mc{M}_{\Gamma^{(m)}_H}^\hbar\to \mc{M}_{\Gamma^{(n)}_H}^\hbar$$


Let $\delta^i:\{0,\dots,n-1\}\to \{0,1,\dots,n\}$ denote the injective monotone map which `misses' $i\in \{0,1,\dots,n\}$. Now there is a unique monotone morphism of ciliated graphs $$P^{(2)}:=\delta^2_!\sqcup\delta^0_!:\Gamma_H^{(1)}\sqcup\Gamma_H^{(1)}\to \Gamma_H^{(2)}$$
which restricts to $\delta^2_!$ on the left copy of $\Gamma_H^{(1)}$ and $\delta^0_!$ on the right copy (this can be pictured as the left hand emebedding in Remark~\ref{rem:MultFig}). So we have a morphism 
\begin{equation}\label{eq:P2}(P^{(2)})^*:\mc{M}_{\Gamma_H^{(2)}}^\hbar\to \mc{M}_{\Gamma_H^{(1)}}^\hbar\times\mc{M}_{\Gamma_H^{(1)}}^\hbar.\end{equation}

\begin{Theorem}\label{thm:HopfQuant3}
	The morphism \eqref{eq:P2} is invertible, and the composite morphism \begin{equation}\label{eq:coMult3}\nabla:=(\delta^1)^*\circ\big((P^{(2)})^*\big)^{-1}:\mc{M}_{\Gamma_H^{(1)}}^\hbar\times\mc{M}_{\Gamma_H^{(1)}}^\hbar\to \mc{M}_{\Gamma_H^{(1)}}^\hbar\end{equation} is an associative product  on $\mc{M}_{\Gamma_H^{(1)}}^\hbar$ quantizing the Poisson algebraic group $H$; that is $H\xhookrightarrow{\hbar=0}\mc{M}_{\Gamma_H^{(1)}}^\hbar$ is both a morphism of monoids and a deformation quantization of the Poisson structure. 
	Dually, $\mc{O}(\mc{M}_{\Gamma_H^{(1)}}^\hbar)\cong\mc{O}(H)[\![\hbar]\!]$ is a Hopf-algebra which deformation quantizes the Poisson algebraic group $H$.
\end{Theorem}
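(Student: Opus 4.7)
The strategy parallels the proof of Theorem~\ref{thm:HopfQuant1}: I exhibit $[n]\mapsto \Gamma_H^{(n)}$ as a cosimplicial object in $\sCol$, and then read off the bialgebra structure on $\mc{M}^\hbar_{\Gamma_H^{(1)}}$ from the functoriality of $\mc{M}^\hbar_{(-)}$ combined with a Segal-type invertibility argument. Concretely, any monotone map $\tau:[n]\to[m]$ induces a morphism of skeletized coloured surfaces $\tau_!:\Gamma_H^{(n)}\to\Gamma_H^{(m)}$ as in the paragraph preceding \eqref{eq:P2}, and these assignments are compatible with composition, so Theorem~\ref{thm:AlgFunct} produces a simplicial object $[n]\mapsto \mc{M}^\hbar_{\Gamma_H^{(n)}}$ in $\Top_{\K\Alg_\hbar}$. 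The construction of $P^{(2)}$ generalizes to Segal maps $P^{(n)}:\Gamma_H^{(1)}\sqcup\cdots\sqcup\Gamma_H^{(1)}\to \Gamma_H^{(n)}$ (with the $i$-th summand sent onto the half-edges $\{i-1,i\}$ of $\Gamma_H^{(n)}$), inducing morphisms $(P^{(n)})^*:\mc{M}^\hbar_{\Gamma_H^{(n)}}\to (\mc{M}^\hbar_{\Gamma_H^{(1)}})^n$.

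The first step is the invertibility of the Segal maps, which I would verify at $\hbar=0$. Unwinding the definitions, $\mc{M}_{\Gamma_H^{(n)}}\cong H^{n+1}/H$ (diagonal left multiplication), and the slice $h_0=e$ trivializes this as $H^n$. Under these identifications $(P^{(2)})^*:H^2\to H\times H$ becomes the explicit bijection $(a,c)\mapsto (a,a^{-1}c)$; the same calculation shows each $(P^{(n)})^*$ is an isomorphism at $\hbar=0$. Since $\mc{M}^\hbar_{\Gamma_H^{(n)}}$ is $\hbar$-adically complete and topologically free over $\K[\![\hbar]\!]$, the standard completeness argument promotes this to an isomorphism in $\Top_{\K\Alg_\hbar}$; in particular, this verifies the invertibility claim for $(P^{(2)})^*$ asserted in the theorem.

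Associativity of $\nabla$ then follows by applying $\mc{M}^\hbar_{(-)}$ to a commutative diagram in $\sCol$ involving $\Gamma_H^{(3)}$, in complete analogy with the proof of Theorem~\ref{thm:HopfQuant1}. Both $\nabla\circ(\nabla\times\id)$ and $\nabla\circ(\id\times\nabla)$ factor through $\mc{M}^\hbar_{\Gamma_H^{(3)}}$ via the two evident three-piece decompositions of $\Gamma_H^{(3)}$, and functoriality forces both composites to equal a common triple product $\nabla^{(3)}:(\mc{M}^\hbar_{\Gamma_H^{(1)}})^3\to \mc{M}^\hbar_{\Gamma_H^{(1)}}$ obtained by pulling back along either of the two equal composite face maps $[1]\to[3]$, the equality being the simplicial identity $\delta^1\delta^2=\delta^1\delta^1$ in $\mbf{\Delta}$. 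The unit is $(\sigma^0)^*:\mc{M}^\hbar_{\Gamma_H^{(0)}}\to \mc{M}^\hbar_{\Gamma_H^{(1)}}$ arising from the unique degeneracy $\sigma^0:[1]\to[0]$, and the unit axioms reduce similarly to the standard simplicial identities involving $\sigma^0$.

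It remains to check compatibility with the classical structure. A direct computation at $\hbar=0$ shows that $\nabla$ reduces to the group product on $H$: the inverse of $(P^{(2)})^*$ sends $(a,b)\in H^2$ to $(a,ab)$, and $(\delta^1)^*$ extracts the second coordinate $ab$. Combined with Theorem~\ref{thm:CohQuant} (or equivalently \cite[Theorem~3]{LiBland:2014da}), which guarantees that $H\hookrightarrow\mc{M}^\hbar_{\Gamma_H^{(1)}}$ is a deformation quantization of the Lie--Poisson structure, this establishes that $\nabla$ is an associative product on $\mc{M}^\hbar_{\Gamma_H^{(1)}}$ quantizing $H$ as a Poisson algebraic group. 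Dually, $\mc{O}(H)[\![\hbar]\!]$ acquires a bialgebra structure whose classical limit is the standard Hopf algebra $\mc{O}(H)$; the antipode lifts order-by-order in $\hbar$ by the standard inductive convolution-inversion argument for $\hbar$-adic bialgebra deformations of Hopf algebras. The principal technical content is the Segal invertibility in the second step; everything else is essentially formal from Theorem~\ref{thm:AlgFunct}.
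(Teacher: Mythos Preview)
Your proposal is correct and follows essentially the same approach as the paper's proof: define the Segal-type maps $P^{(n)}$, check their invertibility at $\hbar=0$ via the identification $\mc{M}_{\Gamma_H^{(n)}}\cong H^{n+1}/H\cong H^n$ and lift by flatness, deduce associativity by factoring both iterated products through $\mc{M}^\hbar_{\Gamma_H^{(3)}}$, and recover the Hopf structure from the classical limit. The paper writes the associativity step slightly more explicitly as $\nabla\circ(\id\times\nabla)=\tau^*\circ((P^{(3)})^*)^{-1}=\nabla\circ(\nabla\times\id)$ with $\tau:\{0,1\}\to\{0,1,2,3\}$, $\tau(0)=0$, $\tau(1)=3$, which is exactly the content of your simplicial identity (though note your identity should read $\delta^2\delta^1=\delta^1\delta^1$ rather than $\delta^1\delta^2=\delta^1\delta^1$ for the standard composition convention in $\mbf{\Delta}$).
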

\begin{remark}\label{rem:MultFig}
We can picture \eqref{eq:coMult3} as follows:
\begin{center}
  \def\svgwidth{\textwidth}
\begingroup%
  \makeatletter%
  \providecommand\color[2][]{%
    \errmessage{(Inkscape) Color is used for the text in Inkscape, but the package 'color.sty' is not loaded}%
    \renewcommand\color[2][]{}%
  }%
  \providecommand\transparent[1]{%
    \errmessage{(Inkscape) Transparency is used (non-zero) for the text in Inkscape, but the package 'transparent.sty' is not loaded}%
    \renewcommand\transparent[1]{}%
  }%
  \providecommand\rotatebox[2]{#2}%
  \ifx\svgwidth\undefined%
    \setlength{\unitlength}{444.86015625bp}%
    \ifx\svgscale\undefined%
      \relax%
    \else%
      \setlength{\unitlength}{\unitlength * \real{\svgscale}}%
    \fi%
  \else%
    \setlength{\unitlength}{\svgwidth}%
  \fi%
  \global\let\svgwidth\undefined%
  \global\let\svgscale\undefined%
  \makeatother%
  \begin{picture}(1,0.23917552)%
    \put(0,0){\includegraphics[width=\unitlength,page=1]{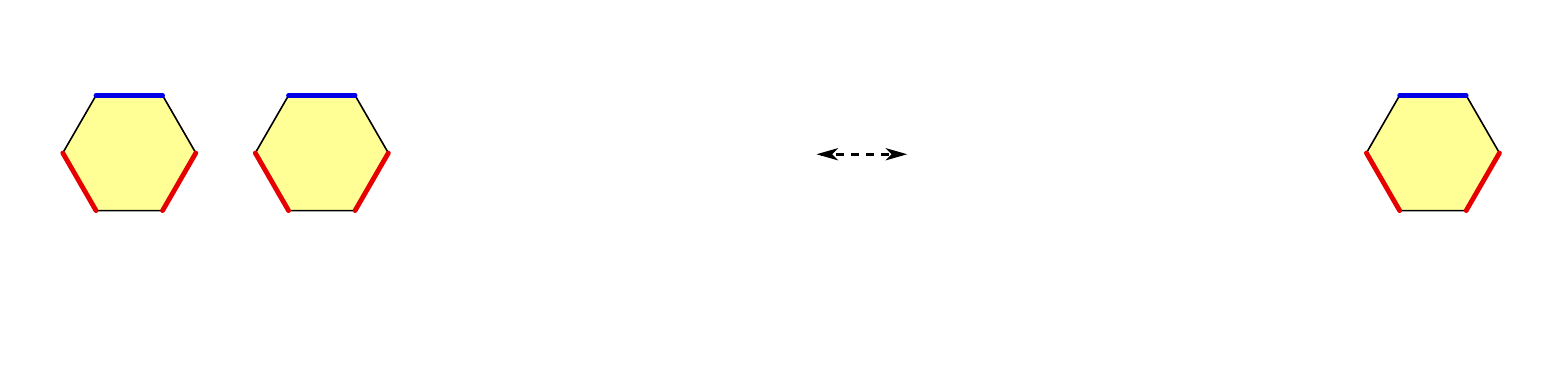}}%
    \put(0.55897301,0.14604063){\color[rgb]{0,0,0}\makebox(0,0)[b]{\smash{$=$}}}%
    \put(0.43118712,0.02169749){\color[rgb]{0,0,0}\makebox(0,0)[b]{\smash{$\Sigma_{\Gamma_H^{(2)}}$}}}%
    \put(0.68654827,0.02169749){\color[rgb]{0,0,0}\makebox(0,0)[b]{\smash{$\Sigma_{\Gamma_H^{(2)}}$}}}%
    \put(0.92752287,0.02169749){\color[rgb]{0,0,0}\makebox(0,0)[b]{\smash{$\Sigma_{\Gamma_H^{(1)}}$}}}%
    \put(0.14705288,0.02169749){\color[rgb]{0,0,0}\makebox(0,0)[b]{\smash{$\Sigma_{\Gamma_H^{(1)}}\sqcup\Sigma_{\Gamma_H^{(1)}}$}}}%
    \put(0,0){\includegraphics[width=\unitlength,page=2]{PoissonLieMult2.pdf}}%
    \put(0.05018867,0.11332926){\color[rgb]{0,0,0}\makebox(0,0)[b]{\smash{$H$}}}%
    \put(0,0){\includegraphics[width=\unitlength,page=3]{PoissonLieMult2.pdf}}%
    \put(0.1163661,0.11332926){\color[rgb]{0,0,0}\makebox(0,0)[b]{\smash{$H$}}}%
    \put(0,0){\includegraphics[width=\unitlength,page=4]{PoissonLieMult2.pdf}}%
    \put(0.17463295,0.11332926){\color[rgb]{0,0,0}\makebox(0,0)[b]{\smash{$H$}}}%
    \put(0,0){\includegraphics[width=\unitlength,page=5]{PoissonLieMult2.pdf}}%
    \put(0.24081038,0.11332926){\color[rgb]{0,0,0}\makebox(0,0)[b]{\smash{$H$}}}%
    \put(0,0){\includegraphics[width=\unitlength,page=6]{PoissonLieMult2.pdf}}%
    \put(0.89396016,0.11332926){\color[rgb]{0,0,0}\makebox(0,0)[b]{\smash{$H$}}}%
    \put(0,0){\includegraphics[width=\unitlength,page=7]{PoissonLieMult2.pdf}}%
    \put(0.96013752,0.11332926){\color[rgb]{0,0,0}\makebox(0,0)[b]{\smash{$H$}}}%
    \put(0,0){\includegraphics[width=\unitlength,page=8]{PoissonLieMult2.pdf}}%
    \put(0.75010229,0.12895651){\color[rgb]{0,0,0}\makebox(0,0)[b]{\smash{$H$}}}%
    \put(0,0){\includegraphics[width=\unitlength,page=9]{PoissonLieMult2.pdf}}%
    \put(0.61343011,0.12895651){\color[rgb]{0,0,0}\makebox(0,0)[b]{\smash{$H$}}}%
    \put(0,0){\includegraphics[width=\unitlength,page=10]{PoissonLieMult2.pdf}}%
    \put(0.49833772,0.12895651){\color[rgb]{0,0,0}\makebox(0,0)[b]{\smash{$H$}}}%
    \put(0,0){\includegraphics[width=\unitlength,page=11]{PoissonLieMult2.pdf}}%
    \put(0.36166554,0.12895651){\color[rgb]{0,0,0}\makebox(0,0)[b]{\smash{$H$}}}%
    \put(0,0){\includegraphics[width=\unitlength,page=12]{PoissonLieMult2.pdf}}%
    \put(0.42991322,0.0630322){\color[rgb]{0,0,0}\makebox(0,0)[b]{\smash{$H$}}}%
    \put(0,0){\includegraphics[width=\unitlength,page=13]{PoissonLieMult2.pdf}}%
    \put(0.68167774,0.0630322){\color[rgb]{0,0,0}\makebox(0,0)[b]{\smash{$H$}}}%
  \end{picture}%
\endgroup%

\end{center}	
\end{remark}

\begin{proof}
The proof is essentially the same as the proof of Theorem~\ref{thm:HopfQuant1}, so we omit many of the details.

For $j\in \{0,1,\dots,n-1\}$, we define $\epsilon^j_n:\{0,1\}\xrightarrow{i\to i+j} \{0,1,\dots,n\}$. Now there is a unique monotone morphism of ciliated graphs $$P^{(n)}:=(\epsilon^0_n)_!\sqcup\cdots \sqcup (\epsilon^{n-1}_n)_!:\overset{n}{\overbrace{\Gamma_H^{(1)}\sqcup\cdots\sqcup\Gamma_H^{(1)}}}\to \Gamma_H^{(n)}$$
which restricts to $\epsilon^j_n$ on the $i^{th}$ copy of $\Gamma_H^{(1)}$. Thus we have morphisms  
\begin{equation}\label{eq:Pn}(P^{(n)})^*:\mc{M}_{\Gamma_H^{(n)}}^\hbar\to \overset{n}{\overbrace{\mc{M}_{\Gamma_H^{(1)}}^\hbar\times\cdots \times\mc{M}_{\Gamma_H^{(n)}}^\hbar}}.\end{equation}

	As in the proof of Theorem~\ref{thm:HopfQuant1}, at $\hbar=0$, \eqref{eq:Pn} is the canonical identification
	$$\mc{M}_{\Gamma_H^{(n)}}= H^{n+1}/H \to \overset{n}{\overbrace{(H^2/H)\times\cdots\times (H^2/H)}}= \overset{n}{\overbrace{\mc{M}_{\Gamma_H^{(1)}}\times\cdots\times \mc{M}_{\Gamma_H^{(n)}}}},$$ (where all the $H$ actions are diagonal) and hence invertible. Since all the sheaves of algebras in \eqref{eq:Pn} are flat $\mbb{K}[\![\hbar]\!]$ algebras, it follows that \eqref{eq:Pn} remains invertible away from $\hbar=0$. 

The coassociativity of \eqref{eq:coMult3} follows from the fact that 
\begin{multline*} \nabla\circ (\id\times \nabla)=\big((\delta^1)^*\circ((P^{(2)})^*)^{-1}\big)\circ \bigg( (\id\times (\delta^2)^*)\circ \big(\id\times ((P^{(2)})^*)^{-1}\big)\bigg)
=\tau^*\circ((P^{(3)})^*)^{-1}\\
=\nabla\circ(\nabla\times\id)
\end{multline*}
where $\tau:\{0,1\}\to \{0,1,2,3\}$ is defined by $\tau(0)=0$ and $\tau(1)=3$ (this equation is the analogue of \eqref{eq:CoAss2}).

Note that $\mc{M}_{\Gamma_H^{(0)}}=H/H\cong\ast$ is just a point, and the inclusion $\mc{M}_{\Gamma_H^{(0)}}^\hbar\to \mc{M}_{\Gamma_H^{(1)}}^\hbar$ corresponding to the unique map $\{0,1\}\to \{0\}$ is the unit for the monoid.

 Now, at $\hbar=0$, \eqref{eq:coMult3} 
  is just the group product $H\times H\to H$ (cf. \cite[Example~7]{LiBland:2012vo}). Dually, it follows that $\mc{O}(\mc{M}_{\Gamma_H^{(1)}}^\hbar)\cong\mc{O}(H)[\![\hbar]\!]$ is a Hopf-algebra which deformation quantizes the Poisson algebraic group $H$.
\end{proof}

\subsection{Equivariant quantization}
Suppose that $H$ is a Poisson algebraic group, $X$ a scheme over $\K$ equipped with a Poisson structure, and $H\times X\to X$ an action which is also a Poisson morphism; in particular $H\in \Top_{\K\CPoiss}$ is a monoid and $X\in\Top_{\K\CPoiss}$ is a module for $H$. Then an \emph{equivariant deformation quantization} of $X$ is a monoid $H^\hbar\in \Top_{\K\Alg_\hbar}$ over $H$, together with a module $X^\hbar \in \Top_{\K\Alg_\hbar}$ for $H^\hbar$ over $X$ (cf. \cite{Etingof:1998vf}). The Groenewold - Van Hove No-Go theorem implies that equivariant quantizations need not exist in general \cite{Groenewold:1946tf,Gotay:2008wr,VanHove:1951te}.

Let $(\g,\h,\h^*)$ denote the Manin triple corresponding to $H$. Suppose that $(H,\g)$  acts on a scheme $M$ with coisotropic stabilizers. Then $M$ carries a natural Poisson structure whose bivector field is given by the formula
\begin{equation}\label{eq:PoissonStrOnHgqpc}
\pi=\frac{1}{2}\sum_i(\xi^i)_M\wedge (\xi_i)_M
\end{equation}
where for any $\xi\in \g$, $\xi_M\in \mf{X}(M)$ denotes the corresponding vector field on $M$, and $\{\xi_i\}\subset \h$ and $\{\xi^i\}\subset\h^*$ are basis in duality; moreover the action map $H\times M\to M$ is a Poisson morphism (cf. \cite{Lu06,LiBland:2009hx}).

In this section we will construct an equivariant quantization of $M$. First, however, we give some examples:

\begin{Example}[Variety of Lagrangian (coisotropic) subalgebras]
Let $\mc{L}(\g)\subseteq \mc{C}\mathrm{iso}(\g)$ denote the variety (resp. coisotropic) Lie subalgebras of $\g$. The Lie algebra $\g$ both act on $\mc{L}(\g)$ (resp. $\mc{C}\mathrm{iso}(\g)$) by adjunction, and the stabilizer of any point $\mf{l}\subseteq\mc{L}(\g)$ (resp. $\mf{l}\subseteq \mc{C}\mathrm{iso}(\g)$) contains $\mf{l}$; in particular it is coisotropic. The algebraic group $H$ also acts compatibly by adjunction. The Poisson structure \eqref{eq:PoissonStrOnHgqpc} on $\mc{L}(\g)$ (resp. $\mc{C}\mathrm{iso}(\g)$) is the one appearing in \cite{Evens:2001ue,Evens:2006kk,LiBland:2009hx}.

Note that if $N$ is any Poisson homogeneous space for $H$, the $H$-equivariant Drinfeld' map $M\to \mc{L}(\g)$ is Poisson (cf. \cite{Evens:2001ue}).
\end{Example}

\begin{Example}[Wonderful Compactification]
As explained in \S~\ref{sec:QPAG} the action of $\g$ on $H$ (which extends both the dressing action of $\h^*$ and the action of $\h$ by left-invariant vector fields) has coisotropic stabilizers. Therefore any $(H,\g)$-equivariant compactification $H\subset \bar H$ of $H$ will also have coisotropic stabilizers. 

	Suppose that $G$ is an algebraic group with Lie algebra $\g$, which is equipped with an algebraic morphism $H\to G$ over the inclusion $\h\subseteq \g$. Suppose further that $t:=\langle\cdot,\cdot\rangle^{-1}\in S^2(\g)^\g$ is $G$-invariant. Then the action of $G\times G$ on $G$ given by $$(g_1,g_2)\cdot g= g_1gg_2^{-1},\quad (g_1,g_2)\in G\times G, g\in G$$ has coisotropic stabilizers with respect to $t\oplus-t\in S^2(\g\oplus\g)$. Therefore any $G\times G$-equivariant compactification $G\subset \bar G$ of $G$ will also have $\g\oplus\bar\g$-coisotropic stabilizers; where $\g\oplus\bar\g=\g\oplus\g$ as a Lie algebra but comes equipped with $t_{\g\oplus\bar\g}:=t\oplus-t\in S^2(\g\oplus\g)$.
\end{Example}

\begin{Example}[Canonical Embeddings of Poisson Homogeneous Spaces]Suppose that $N$ is a Poisson homogeneous space for $H$.
 Then \cite{Drinfeld:1993il} associated to each point $x\in N$ a Lagrangian Lie subalgebra $\mf{l}_x:=\{\subset \g$ such that the stabilizer Lie subalgebra of $x\in N$ is $\mf{h}_x=\h\cap\mf{l}_x$.

Suppose that $G$ is an algebraic group with Lie algebra $\g$, which is equipped with an algebraic morphism $H\to G$ over the inclusion $\h\subseteq \g$. Suppose further that $t:=\langle\cdot,\cdot\rangle^{-1}\in S^2(\g)^\g$ is $G$-invariant. Finally, suppose that Drinfel'd's Lie subalgebra $\mf{l}_x\subset\g$ integrates to an algebraic subgroup $L_x\subset G$ whose preimage in $H$ is the stabilizer subgroup $_x=L_x\times_G H\subset H$ of $x\in N$. (Note that all these conditions are automatically satisfied if we work with formal rather than algebraic groups).

Then $G/L_x$ the action of $\g$ on $G/L_x$ has coisotropic stabilizers, and the natural embedding $N\cong H/H_x\hookrightarrow G/L_x$ is a Poisson morphism with respect to the Lu-Yakimov Poisson structure \eqref{eq:PoissonStrOnHgqpc} on $G/L_x$ (cf. \cite[Remark 2.10]{Lu07} and \cite{Enriquez:r0OMuB1e}).

In particular, Theorem~\ref{thm:EqQuant3} implies that given any (formal) Poisson group $H$ and any (formal) Poisson homogeneous space $N$, there exists a canonical embedding of $N$ into a (formal) Poisson space $G/L_x$ which can be equivariantly quantized. The idea of equivariantly quantizing $G/L_x$ as a means of equivariantly quantizing $N$ was first explored by Enriquez and Kosmann-Schwarzbach in \cite{Enriquez:r0OMuB1e}.
\end{Example}

Let $\Gamma_M^{(n)}$ denote the ciliated graph which has one (positive) vertex $\mbf{V}_{\Gamma_M^{(n)}}=\mbf{V}^+_{\Gamma_M^{(n)}}=\{\ast\}$, no full-edges $\mbf{E}_{\Gamma_M^{(n)}}=\emptyset$, and $n+2$ (positive) half-edges $\mbf{H}_{\Gamma_M^{(n)}}=\mbf{H}^+_{\Gamma_M^{(n)}}=\{0,1,2,\dots,n,\clubsuit\}$ with $0<1<2<\dots<n<\clubsuit$. We colour $\Gamma_M^{(n)}$ as follows: $C_\ast=H$, $X_0=\dots=X_{n}=H$, and $X_\clubsuit=M$. 
We may picture $\Gamma_M^{(n)}$ in terms of the corresponding surfaces $\Sigma_{{\Gamma_M^{(n)}}}$ as follows (cf. Remark~\ref{rem:GeomColSurf}):
\begin{center}
\begingroup%
  \makeatletter%
  \providecommand\color[2][]{%
    \errmessage{(Inkscape) Color is used for the text in Inkscape, but the package 'color.sty' is not loaded}%
    \renewcommand\color[2][]{}%
  }%
  \providecommand\transparent[1]{%
    \errmessage{(Inkscape) Transparency is used (non-zero) for the text in Inkscape, but the package 'transparent.sty' is not loaded}%
    \renewcommand\transparent[1]{}%
  }%
  \providecommand\rotatebox[2]{#2}%
  \ifx\svgwidth\undefined%
    \setlength{\unitlength}{420.44243164bp}%
    \ifx\svgscale\undefined%
      \relax%
    \else%
      \setlength{\unitlength}{\unitlength * \real{\svgscale}}%
    \fi%
  \else%
    \setlength{\unitlength}{\svgwidth}%
  \fi%
  \global\let\svgwidth\undefined%
  \global\let\svgscale\undefined%
  \makeatother%
  \begin{picture}(1,0.20313652)%
    \put(0,0){\includegraphics[width=\unitlength,page=1]{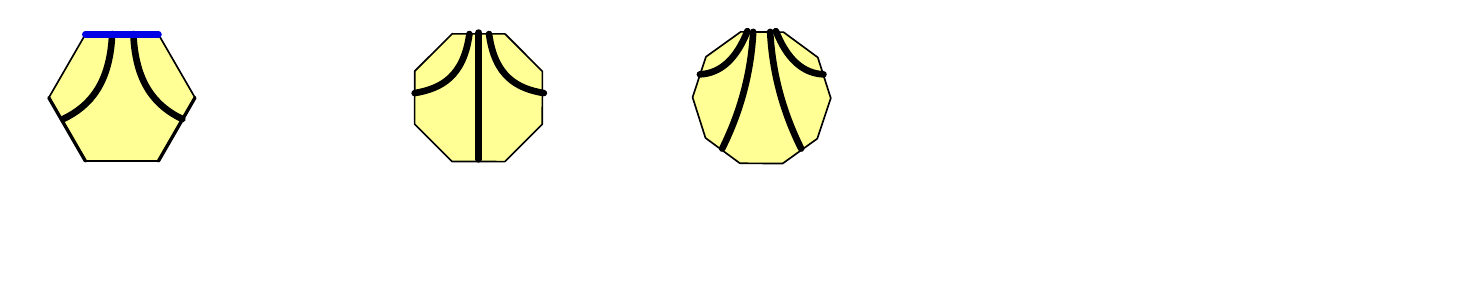}}%
    \put(0.0834222,0.02372354){\color[rgb]{0,0,0}\makebox(0,0)[b]{\smash{$\Sigma_{\Gamma_M^{(0)}}$}}}%
    \put(0.3276913,0.02372354){\color[rgb]{0,0,0}\makebox(0,0)[b]{\smash{$\Sigma_{\Gamma_M^{(1)}}$}}}%
    \put(0,0){\includegraphics[width=\unitlength,page=2]{CoSimplEQ.pdf}}%
    \put(0.52164812,0.02372354){\color[rgb]{0,0,0}\makebox(0,0)[b]{\smash{$\Sigma_{\Gamma_M^{(2)}}$}}}%
    \put(0.53579422,-0.01917702){\color[rgb]{0,0,0}\makebox(0,0)[rb]{\smash{}}}%
    \put(0.70766863,0.11466758){\color[rgb]{0,0,0}\makebox(0,0)[b]{\smash{$\cdots$}}}%
    \put(0,0){\includegraphics[width=\unitlength,page=3]{CoSimplEQ.pdf}}%
    \put(0.04310623,0.10490606){\color[rgb]{0,0,0}\makebox(0,0)[b]{\smash{$H$}}}%
    \put(0.12248458,0.10490606){\color[rgb]{0,0,0}\makebox(0,0)[b]{\smash{$M$}}}%
    \put(0.28163666,0.12764207){\color[rgb]{0,0,0}\makebox(0,0)[b]{\smash{$H$}}}%
    \put(0.3725807,0.12764207){\color[rgb]{0,0,0}\makebox(0,0)[b]{\smash{$M$}}}%
    \put(0.32710868,0.08217005){\color[rgb]{0,0,0}\makebox(0,0)[b]{\smash{$H$}}}%
    \put(0.49535517,0.09126445){\color[rgb]{0,0,0}\makebox(0,0)[b]{\smash{$H$}}}%
    \put(0.54796931,0.09126445){\color[rgb]{0,0,0}\makebox(0,0)[b]{\smash{$H$}}}%
    \put(0.56823405,0.14128368){\color[rgb]{0,0,0}\makebox(0,0)[b]{\smash{$M$}}}%
    \put(0.47729,0.14128368){\color[rgb]{0,0,0}\makebox(0,0)[b]{\smash{$H$}}}%
    \put(0,0){\includegraphics[width=\unitlength,page=4]{CoSimplEQ.pdf}}%
    \put(0.87331985,0.09573842){\color[rgb]{0,0,0}\makebox(0,0)[lb]{\smash{$H$}}}%
    \put(0.83526471,0.13379354){\color[rgb]{0,0,0}\makebox(0,0)[lb]{\smash{Domain walls}}}%
  \end{picture}%
\endgroup%

\end{center}

As in \S~\ref{sec:QPAG} any monotone map $\tau:\{0,\dots,n\}\to \{0,\dots, m\}$ defines two morphisms of skeletized coloured surfaces $\tau_!:\Gamma^{(n)}_H\to \Gamma^{(m)}_M$ and $\tau_!:\Gamma^{(n)}_M\to \Gamma^{(m)}_M$ via the corresponding map of half edges 
$$\begin{tikzpicture}
\mmat{m}{\mbf{H}_{\Gamma^{(n)}_H}&\mbf{H}_{\Gamma^{(m)}_M},\text{ and }\\
\mbf{H}_{\Gamma^{(n)}_M}&\mbf{H}_{\Gamma^{(m)}_M}.\\};
\draw[->] (m-1-1) -- node {$i\mapsto \tau(i)$} (m-1-2);
\draw[->] (m-2-1) -- node {$i\mapsto \tau(i)$} node[swap] {$\clubsuit\mapsto\clubsuit$} (m-2-2);
\end{tikzpicture}$$
 This, in turn, yields morphisms of the corresponding (non-commutative) ringed spaces:
\begin{align*}\tau^*:\mc{M}_{\Gamma^{(m)}_M}^\hbar&\to \mc{M}_{\Gamma^{(n)}_H}^\hbar,\\	
\tau^*:\mc{M}_{\Gamma^{(m)}_M}^\hbar&\to \mc{M}_{\Gamma^{(n)}_M}^\hbar\\	
\end{align*}


Consider the monotone morphism of ciliated graphs $$P_M^{(2)}:=\id_!\sqcup\delta^0_!:\Gamma_H^{(1)}\sqcup\Gamma_M^{(0)}\to \Gamma_M^{(1)}$$
which restricts to $\id_!$ on $\Gamma_H^{(1)}$ and $\delta^0_!$ on $\Gamma_M^{(1)}$ (this can be pictured as the left hand emebedding in Remark~\ref{rem:ActFig}). So we have a morphism 
\begin{equation}\label{eq:PM2}(P^{(2)})^*:\mc{M}_{\Gamma_M^{(1)}}^\hbar\to \mc{M}_{\Gamma_H^{(1)}}^\hbar\times\mc{M}_{\Gamma_M^{(0)}}^\hbar.\end{equation}
\begin{Theorem}\label{thm:EqQuant3}
	The morphism \eqref{eq:PM2} is invertible, and the composite morphism \begin{equation}\label{eq:coAct3}\nabla_M:=(\delta^1)^*\circ\big((P_M^{(2)})^*\big)^{-1}:\mc{M}_{\Gamma_H^{(1)}}^\hbar\times\mc{M}_{\Gamma_M^{(0)}}^\hbar\to \mc{M}_{\Gamma_M^{(0)}}^\hbar\end{equation} defines an action of the monoid $\mc{M}_{\Gamma_H^{(1)}}^\hbar$ on $\mc{M}_{\Gamma_M^{(0)}}^\hbar$, equivariantly quantizing the action of Poisson algebraic group $H$ on $M$. 
	
	Moreover, this equivariant quantization depends functorially on the  scheme $M$.
\end{Theorem}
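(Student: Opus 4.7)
The approach is to transport the proof of Theorem~\ref{thm:HopfQuant3} almost verbatim, inserting the extra $M$-decorated half-edge $\clubsuit$ throughout. The first step is to verify that $(P_M^{(2)})^*$ is invertible. At $\hbar=0$, this morphism reduces to the canonical comparison of quotient schemes
\[
(H\times H\times M)/H \;\xrightarrow{\;\cong\;}\; \big((H\times H)/H\big)\times \big((H\times M)/H\big),
\]
(all $H$-actions being diagonal), and a direct check shows it is an isomorphism. Since the sheaves on both sides are topologically free over $\mbb{K}[\![\hbar]\!]$ by the construction of $\mc{M}^\hbar_\Gamma$ in \S\ref{sec:theAlgebra}, the same Nakayama-style argument used in Theorems~\ref{thm:HopfQuant1} and \ref{thm:HopfQuant3} upgrades this to invertibility for all $\hbar$.

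Next, I would introduce higher analogues $P_M^{(n+1)}\colon \Gamma_H^{(1)}\sqcup\cdots\sqcup \Gamma_H^{(1)}\sqcup \Gamma_M^{(0)}\to \Gamma_M^{(n)}$ (with $n$ copies of $\Gamma_H^{(1)}$), built from the maps $(\epsilon^j_n)_!$ of \S\ref{sec:QPAG} together with a $\delta^0_!$-type inclusion on the $\Gamma_M^{(0)}$ factor, and establish their invertibility in exactly the same way. The action axiom
\[
\nabla_M\circ (\nabla\times \id) \;=\; \nabla_M\circ (\id\times \nabla_M)
\]
then follows by exhibiting both composites as $\tau^*\circ ((P_M^{(3)})^*)^{-1}$ for the appropriate monotone morphism $\tau$ between half-edge sets, in direct parallel to the coassociativity verification via $\Gamma_H^{(3)}$ in the proof of Theorem~\ref{thm:HopfQuant3}. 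The unit axiom is obtained by applying $\mc{M}^\hbar_{(-)}$ to the evident collapse morphism involving $\Gamma_H^{(0)}$.

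The classical limit is then immediate: at $\hbar=0$ the map \eqref{eq:coAct3} coincides with the classical action $H\times M\to M$, which is Poisson with respect to the structure \eqref{eq:PoissonStrOnHgqpc} (cf.~\cite{Lu06,LiBland:2012vo}), so the inclusion at $\hbar=0$ is an equivariant deformation quantization as required. Functoriality in $M$ is immediate from Theorem~\ref{thm:AlgFunct}: any $(H,\g)$-equivariant morphism $M\to M'$ of schemes with coisotropic stabilizers induces a compatible family of morphisms $\Gamma_M^{(n)}\to \Gamma_{M'}^{(n)}$ of skeletized coloured surfaces (identity on the underlying graph, identity on each $X_i$, the given morphism on $X_\clubsuit$), so naturality of $\nabla_M$ in $M$ follows from functoriality of $\mc{M}^\hbar_{(-)}$. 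The main obstacle is the flatness/classical-limit argument for invertibility of $(P_M^{(2)})^*$ and, relatedly, checking that the classical identifications really factor through the stated quotients; once this is secured, everything else transports formally from the group case.
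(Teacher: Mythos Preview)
Your proposal is correct and follows exactly the approach the paper takes: the paper's own proof simply says the action part is ``a direct analogue of the proof of Theorem~\ref{thm:HopfQuant3}'' and then establishes functoriality in $M$ via Theorem~\ref{thm:AlgFunct}, precisely as you do. One small correction on variance: by Definition~\ref{def:SkelColSurf} a morphism of skeletized coloured surfaces carries \emph{backward} maps on the $X$'s, so an $(H,\g)$-equivariant morphism $M\to M'$ induces $\Gamma_{M'}^{(n)}\to \Gamma_M^{(n)}$ (contravariant in $M$), and then the contravariant functor $\mc{M}^\hbar_{(-)}$ makes $\mc{M}^\hbar_{\Gamma_M^{(0)}}$ covariant in $M$ overall.
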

\begin{remark}\label{rem:ActFig}
We can picture \eqref{eq:coAct3} as follows:
\begin{center}
\begingroup%
  \makeatletter%
  \providecommand\color[2][]{%
    \errmessage{(Inkscape) Color is used for the text in Inkscape, but the package 'color.sty' is not loaded}%
    \renewcommand\color[2][]{}%
  }%
  \providecommand\transparent[1]{%
    \errmessage{(Inkscape) Transparency is used (non-zero) for the text in Inkscape, but the package 'transparent.sty' is not loaded}%
    \renewcommand\transparent[1]{}%
  }%
  \providecommand\rotatebox[2]{#2}%
  \ifx\svgwidth\undefined%
    \setlength{\unitlength}{444.86015625bp}%
    \ifx\svgscale\undefined%
      \relax%
    \else%
      \setlength{\unitlength}{\unitlength * \real{\svgscale}}%
    \fi%
  \else%
    \setlength{\unitlength}{\svgwidth}%
  \fi%
  \global\let\svgwidth\undefined%
  \global\let\svgscale\undefined%
  \makeatother%
  \begin{picture}(1,0.23917552)%
    \put(0,0){\includegraphics[width=\unitlength,page=1]{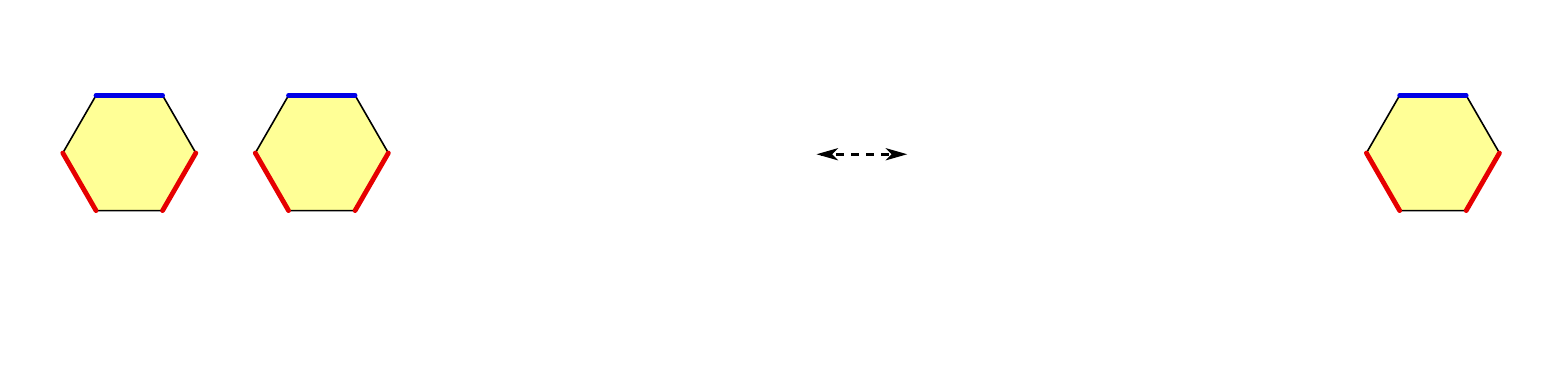}}%
    \put(0.55897301,0.14604063){\color[rgb]{0,0,0}\makebox(0,0)[b]{\smash{$=$}}}%
    \put(0.43118712,0.02169749){\color[rgb]{0,0,0}\makebox(0,0)[b]{\smash{$\Sigma_{\Gamma_M^{(1)}}$}}}%
    \put(0.68654827,0.02169749){\color[rgb]{0,0,0}\makebox(0,0)[b]{\smash{$\Sigma_{\Gamma_M^{(1)}}$}}}%
    \put(0.92752287,0.02169749){\color[rgb]{0,0,0}\makebox(0,0)[b]{\smash{$\Sigma_{\Gamma_M^{(0)}}$}}}%
    \put(0.14705288,0.02169749){\color[rgb]{0,0,0}\makebox(0,0)[b]{\smash{$\Sigma_{\Gamma_H^{(1)}}\sqcup\Sigma_{\Gamma_M^{(0)}}$}}}%
    \put(0,0){\includegraphics[width=\unitlength,page=2]{PoissonLieAct2.pdf}}%
    \put(0.05018867,0.11332926){\color[rgb]{0,0,0}\makebox(0,0)[b]{\smash{$H$}}}%
    \put(0,0){\includegraphics[width=\unitlength,page=3]{PoissonLieAct2.pdf}}%
    \put(0.1163661,0.11332926){\color[rgb]{0,0,0}\makebox(0,0)[b]{\smash{$H$}}}%
    \put(0,0){\includegraphics[width=\unitlength,page=4]{PoissonLieAct2.pdf}}%
    \put(0.17463295,0.11332926){\color[rgb]{0,0,0}\makebox(0,0)[b]{\smash{$H$}}}%
    \put(0,0){\includegraphics[width=\unitlength,page=5]{PoissonLieAct2.pdf}}%
    \put(0.24081038,0.11332926){\color[rgb]{0,0,0}\makebox(0,0)[b]{\smash{$M$}}}%
    \put(0,0){\includegraphics[width=\unitlength,page=6]{PoissonLieAct2.pdf}}%
    \put(0.89396016,0.11332926){\color[rgb]{0,0,0}\makebox(0,0)[b]{\smash{$H$}}}%
    \put(0,0){\includegraphics[width=\unitlength,page=7]{PoissonLieAct2.pdf}}%
    \put(0.96013752,0.11332926){\color[rgb]{0,0,0}\makebox(0,0)[b]{\smash{$M$}}}%
    \put(0,0){\includegraphics[width=\unitlength,page=8]{PoissonLieAct2.pdf}}%
    \put(0.75010229,0.12895651){\color[rgb]{0,0,0}\makebox(0,0)[b]{\smash{$M$}}}%
    \put(0,0){\includegraphics[width=\unitlength,page=9]{PoissonLieAct2.pdf}}%
    \put(0.61343011,0.12895651){\color[rgb]{0,0,0}\makebox(0,0)[b]{\smash{$H$}}}%
    \put(0,0){\includegraphics[width=\unitlength,page=10]{PoissonLieAct2.pdf}}%
    \put(0.49833772,0.12895651){\color[rgb]{0,0,0}\makebox(0,0)[b]{\smash{$M$}}}%
    \put(0,0){\includegraphics[width=\unitlength,page=11]{PoissonLieAct2.pdf}}%
    \put(0.36166554,0.12895651){\color[rgb]{0,0,0}\makebox(0,0)[b]{\smash{$H$}}}%
    \put(0,0){\includegraphics[width=\unitlength,page=12]{PoissonLieAct2.pdf}}%
    \put(0.42991322,0.0630322){\color[rgb]{0,0,0}\makebox(0,0)[b]{\smash{$H$}}}%
    \put(0,0){\includegraphics[width=\unitlength,page=13]{PoissonLieAct2.pdf}}%
    \put(0.68167774,0.0630322){\color[rgb]{0,0,0}\makebox(0,0)[b]{\smash{$H$}}}%
  \end{picture}%
\endgroup%

\end{center}	
\end{remark}
\begin{proof}
The proof that \eqref{eq:coAct3} is a direct analogue of the proof of Theorem~\ref{thm:HopfQuant3}, and so we omit it. 

To see that this equivariant quantization depends naturally on $M$, notice that any $(H,\g)$-equivariant morphism between schemes $M\to N$ on which $\g$ acts with coisotropic stabilizers defines a morphism $\Gamma_N^{(n)}\to \Gamma_M^{(n)}$ of the corresponding skeletized coloured surfaces; that is $\Gamma_M^{(n)}$ depends (contravariantly) functorially on $M$. Similarly, $\mc{M}^\hbar_{\Gamma_M^{(n)}}$ depends (contravariantly) functorially on $\Gamma_M^{(n)}$ (cf. Theorem~\ref{thm:AlgFunct}). Hence $\mc{M}^\hbar_{\Gamma_M^{(0)}}$ (as well as all the structural maps) depends naturally on $M$.
\end{proof}

\bibliography{basicbib}{}
\bibliographystyle{plain}
\end{document}